\numberwithin{equation}{theorem}
\renewcommand{\m}{\mathfrak{m}}
\DeclareMathOperator{\Ass}{Ass}
\DeclareMathOperator{\crk}{crk}
\DeclareMathOperator{\lc}{H}
\DeclareMathOperator{\cO}{\mathcal{O}}
\DeclareMathOperator{\fm}{\mathfrak{m}}
\theoremstyle{theorem}
\begin{document}
\title{$D$-module and $F$-module length of local cohomology modules}
\author[Katzman]{Mordechai Katzman}
\address{Department of Pure Mathematics, University of Sheffield, Hicks Building, Sheffield S3 7RH, United Kingdom}
\email{M.Katzman@sheffield.ac.uk}
\author[Ma]{Linquan Ma}
\address{Department of Mathematics\\ University of Utah\\ Salt Lake City}
\email{lquanma@math.utah.edu}
\author[Smirnov]{Ilya Smirnov}
\address{Department of Mathematics\\ University of Michigan \\ Ann Arbor}
\email{ismirnov@umich.edu}
\author[Zhang]{Wenliang Zhang}
\address{Department of Mathematics\\ University of Illinois at Chicago \\ Chicago}
\email{wlzhang@uic.edu}

\subjclass[2010]{Primary: 13D45; Secondary:13A35, 13C60.}

\thanks{W. Zhang is partially supported by the National Science Foundation through grant DMS \#1606414.}
\thanks{L. Ma is partially supported by the National Science Foundation through grant DMS \#1600198, and partially by the National Science Foundation CAREER Grant DMS \#1252860/1501102.}
\maketitle

\begin{abstract}
Let $R$ be a polynomial or power series ring over a field $k$. We study the length of local cohomology modules $\lc^j_I(R)$ in the category of $D$-modules and $F$-modules. We show that the $D$-module length of $\lc^j_I(R)$ is bounded by a polynomial in the degree of the generators of $I$. In characteristic $p>0$ we obtain upper and lower bounds on the $F$-module length in terms of the dimensions of Frobenius stable parts and the number of special primes of local cohomology modules of $R/I$. The obtained upper bound is sharp if $R/I$ is an isolated singularity, and the lower bound is sharp when $R/I$ is Gorenstein and $F$-pure. We also give an example of a local cohomology module that has different $D$-module and $F$-module lengths.
\end{abstract}

%Let $R$ be a polynomial or power series ring over a field $k$. We study the length of local cohomology modules $\lc^j_I(R)$ in the category of $D$-modules and, in characteristic $p$, the length of $\lc_I^c(R)$ with $c=\height I$ in the category of $F$-modules. We obtain the actual length of $\lc^j_I(R)$ as either a $D$-module or an $F$-module in several cases. We produce a general upper bound for $\lc^j_I(R)$ in the category of $D$-modules when $R$ is a polynomial ring. In characteristic $p>0$, if the dimension of the non-$F$-rational locus of $R/I$ is small, we also obtain upper bounds on the $D$-module length in terms of the dimensions of the Frobenius stable parts of certain local cohomology modules of $R/I$ and its localizations. Our bounds are sharp in many cases, for example, when $R/I$ has an isolated singularity. When $R/I$ is $F$-pure, we also obtain sharp lower bounds on the $F$-module length of $\lc_I^j(R)$ in terms of the number of special primes of $\lc_\m^{n-j}(R/I)$, and when $R/I$ is Cohen-Macaulay we can explicitly write down an $F$-module filtration of $\lc_I^c(R)$ that is maximal when $R/I$ is Gorenstein. Finally, we construct an example of a local cohomology module of $R$ such that, with its natural structure, its $D$-module length is strictly greater than its $F$-module length. We compute the Frobenius stable part of the top local cohomology module of the Fermat hypersurface $k[x_0,x_1,\dots,x_d]/(x_0^n+x_1^n+\cdots +x_d^n)$ explicitly in terms of the number of solutions to a system of equations on remainders.

\setcounter{tocdepth}{1}
%\marginnote{We added the table of contents for readability LM\&IS}
\tableofcontents

\section{Introduction}

Since its introduction by Grothendieck, local cohomology has become a major part of commutative algebra that has been studied from different points of view. When $R$ is a polynomial or power series ring over a field $k$, each local cohomology module $\lc^j_I(R)$ admits a natural module structure over $D(R,k)$, the ring of $k$-linear differential operators ($D(R,k)$-modules will be reviewed in \S\ref{section: preliminaries}). In characteristic 0, \cite{LyubeznikFinitenessLocalCohomology} shows that $\lc^j_I(R)$ has finite length as a $D(R,k)$-module, though $\lc^j_I(R)$ is rarely finitely generated as an $R$-module. To this day, using the finite length property of $\lc^j_I(R)$ in the category of $D(R,k)$-modules is still the only way to prove that $\lc^j_I(R)$ has finitely many associated primes in characteristic 0. In characteristic $p$, Frobenius action on local cohomology modules was used with great success by a number of authors, {\it e.g.}, \cite{PeskineSzpiroDimensionProjective}, \cite{HartshorneSpeiserLocalCohomologyInCharacteristicP} and \cite{HunekeSharpBassnumbersoflocalcohomologymodules}.
Lyubeznik (\cite{LyubeznikFModulesApplicationsToLocalCohomology})
conceptualizes the previous work to develop a theory of $F$-modules  in characteristic $p$
(the reader may find an overview in \S\ref{section: preliminaries}).

As we have seen, in characteristic $p$, local cohomology modules $\lc^j_I(R)$ can be viewed as both $D(R,k)$-modules and $F$-modules; \cite{LyubeznikFModulesApplicationsToLocalCohomology} compares these two points of view. It's shown that each $F$-module $M$ admits a natural $D(R,k)$-module structure and its length as an $F$-module, $l_{F_R}(M)$, is no more than its length as a $D(R,k)$-module, $l_{D(R,k)}(M)$. The comparison of these two points of view was continued in \cite{BlickleDmodulestructureofRFmodules}, where Blickle shows that over an algebraically closed field
the $D(R,k)$-module length is equal to the $F^{\infty}$-module length ($F^{\infty}$-modules will be reviewed in \S\ref{section: preliminaries}). The fact that local cohomology modules have finite length as $D(R,k)$-modules has found many applications; for instance \cite{NunezBetancourtWittGeneralizedLyubeznikNumbers} introduces numerical invariants of local rings using the length of local cohomology modules as $D(R,k)$-modules and shows that there are close connections between these invariants and $F$-singularities.

Despite the importance of the finiteness of the length of local cohomology modules as $D(R,k)$-modules and $F$-modules, finding the actual length of local cohomology modules as such modules remains an intriguing and difficult open question. In this paper we provide partial answers to this question in characteristic $p$.

\begin{theorem}[Theorems \ref{theorem--D-module length for isolated singularities} and \ref{theorem--lower bounds of F-module length for F-pure}]
Let $R=k[[x_1,\dots, x_n]]$ (or $k[x_1,\dots,x_n]$) with $\m=(x_1,\dots,x_n)$, where $k$ is a field of characteristic $p>0$. Let $A=R/I$ be reduced and equidimensional (respectively, graded reduced and equidimensional) of dimension $d\geq 1$.
\begin{enumerate}
\item If $A$ has an isolated non-$F$-rational point at $\fm$ ({\it e.g.}, $A$ has an isolated singularity at $\m$), then
\[l_{D(R,k)}(\lc^{n-d}_I(R))=\dim_k(0^*_{\lc^d_{\fm}(A)})_s+c=\dim_k((\lc_{\fm}^d(A))_s)+c\]
where $c$ is the number of minimal primes of $A$. Moreover, if $k$ is separably closed, then
\[l_{F_R}(\lc^{n-d}_I(R))=l_{F_R^e}(\lc^{n-d}_I(R))= l_{F_R^\infty}(\lc^{n-d}_I(R))= l_{D_R}(\lc^{n-d}_I(R))=\dim_k(0^*_{\lc^d_{\fm}(A)})_s+c.\]
\item If $A$ is $F$-pure and quasi-Gorenstein, then $l_{F_R}(\lc_I^{n-d}(R))$ is exactly the number of special primes of $\lc_\m^d(A)$.
\end{enumerate}
\end{theorem}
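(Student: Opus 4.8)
The two assertions share a common engine, so I would develop that first and then specialize. Both concern the lowest local cohomology module $\lc^{n-d}_I(R)$, where $\height I = n-d$; it is an $F$-finite $F$-module and a holonomic $D(R,k)$-module at the same time. The first ingredient is a Matlis--Frobenius duality identifying $\lc^{n-d}_I(R)$, with its $F$-module structure, with $\lc^d_\m(A)$ carrying its natural Frobenius action, viewed as an Artinian $A$-module with a $p$-linear endomorphism (equivalently, a module over the Frobenius skew polynomial ring of $A$): since $A$ is unmixed here, the minimal root of $\lc^{n-d}_I(R)$ is $\Ext^{n-d}_R(A,R)=\omega_A$ with generating morphism coming from the Frobenius on $\omega_A$, and one must check this duality is length-preserving. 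The second ingredient is Blickle's theorem $l_{D_R}(M)=l_{F_R^\infty}(M)$ together with the elementary fact that an $F$-module supported only at $\m$ is a finite direct sum of copies of $\lc^n_\m(R)$, the number of copies being its length. The extra hypotheses simplify this picture: quasi-Gorenstein-ness in (2) forces $\omega_A\cong A$, so the minimal root is the ring itself and $F$-submodules of $\lc^{n-d}_I(R)$ match $F$-stable $A$-submodules of $\lc^d_\m(A)$; reducedness and equidimensionality in (1) pin down the generic behaviour.

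For part (1), I would first show $\Gamma_\m(\lc^{n-d}_I(R))=0$ by running the spectral sequence $\lc^p_\m(\lc^q_I(R))\Rightarrow \lc^{p+q}_\m(R)$ and using $d\geq 1$, so that there is a short exact sequence of $D$-modules $0\to\mathcal L\to\lc^{n-d}_I(R)\to N\to 0$ in which $\mathcal L$ is the minimal extension of the local cohomology sheaf on the punctured spectrum $U=\Spec A\setminus\{\m\}$ and $N$ is supported at $\m$. Since $\m$ is an isolated non-$F$-rational point, $A$ is $F$-rational on $U$; combined with reducedness and equidimensionality this gives $l_{D_R}(\mathcal L)=c$, the number of minimal primes of $A$. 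It then remains to compute $l_{D_R}(N)=\dim_k N$ (as a multiple of $\lc^n_\m(R)$): under the duality above, $N$ is matched with the finite-length module $0^*_{\lc^d_\m(A)}$, and passing to stable parts --- which is precisely where the difference between the $F$-, $F^\infty$- and $D$-module structures lives, and where one checks $(0^*_{\lc^d_\m(A)})_s=(\lc^d_\m(A))_s$ --- yields $l_{D_R}(N)=\dim_k(0^*_{\lc^d_\m(A)})_s$. Adding the two contributions gives the formula for $l_{D(R,k)}(\lc^{n-d}_I(R))$. When $k$ is separably closed the relevant simple factors are absolutely simple, so Blickle's theorem and the comparison $l_{F_R}\le l_{F_R^e}\le l_{F_R^\infty}\le l_{D_R}$ collapse to equalities, giving the second displayed chain.

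For part (2), observe that $A$ being $F$-pure makes the natural Frobenius on $\lc^d_\m(A)$ injective, and quasi-Gorenstein-ness makes $A$ unmixed, so the first-paragraph duality applies with $\omega_A\cong A$. Thus $l_{F_R}(\lc^{n-d}_I(R))$ equals the length of $\lc^d_\m(A)$ measured by chains of $F$-stable $A$-submodules. For an Artinian module with injective Frobenius this is exactly the framework of the Sharp and Enescu--Hochster theory of special primes: $\lc^d_\m(A)$ has only finitely many $F$-stable submodules, and the composition factors along a maximal such chain are in bijection with the special primes of $\lc^d_\m(A)$. Hence $l_{F_R}(\lc^{n-d}_I(R))$ is the number of special primes, as claimed.

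I expect the main obstacle to be the Matlis--Frobenius duality of the first paragraph, in two respects: making the identification $\lc^{n-d}_I(R)\leftrightarrow(\lc^d_\m(A),F)$ precise and length-preserving when $A$ is not Cohen--Macaulay --- so that one must argue, from unmixedness, that higher $\Ext^j_R(A,R)$ with $j>n-d$ do not enter the root --- and controlling the interplay between the $F$-, $F^\infty$- and $D$-module lengths, which is the source of both the stable-part operation $(-)_s$ appearing in (1) and the need for $k$ to be separably closed there. Once these are in hand, part (2) reduces to citing the structure theory of Frobenius actions on top local cohomology, and part (1) to the explicit short exact sequence above.
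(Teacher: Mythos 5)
Your overall architecture matches the paper's: for (1) you split $\lc^{n-d}_I(R)$ into a submodule of length $c$ governed by the minimal primes and a quotient supported at $\m$ of length $\dim_k(0^*_{\lc^d_\m(A)})_s$, and for (2) you reduce to the Sharp--Enescu--Hochster lattice of special ideals of $E_A\cong\lc^d_\m(A)$. The paper obtains the short exact sequence in (1) not via $\Gamma_\m$ but by applying the Lyubeznik functor $\scr{H}_{R,A}$ to $0\to 0^*_{\lc^d_\m(A)}\to\lc^d_\m(A)\to\lc^d_\m(A)/0^*_{\lc^d_\m(A)}\to 0$: the resulting submodule is, by Blickle, a direct sum of $c$ simple intersection-homology $D(R,k)$-modules, and the quotient $\scr{H}_{R,A}(0^*_{\lc^d_\m(A)})$ is $\m$-supported because $0^*_{\lc^d_\m(A)}$ has finite length.

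Three of your steps would not go through as written. First, the spectral sequence $\lc^p_\m(\lc^q_I(R))\Rightarrow\lc^{p+q}_\m(R)$ does not give $\Gamma_\m(\lc^{n-d}_I(R))=0$: it only shows $E_\infty^{0,n-d}=0$, i.e.\ that the outgoing differentials are jointly injective on $\Gamma_\m(\lc^{n-d}_I(R))$, embedding it into modules $\lc^r_\m(\lc^{n-d-r+1}_I(R))$ that need not vanish. (The vanishing itself is not needed for the paper's argument.) Second, for separably closed $k$ you invoke Blickle's theorem $l_{D_R}=l_{F^\infty_R}$, which is proved only for algebraically closed fields; the paper instead uses that the $c$ intersection-homology factors are simple $F_R$-modules over any field, together with a separate lemma that over a separably closed field an $F_R$-module supported at $\m$ decomposes as $E^{\oplus r}$ already in the category of $F_R$-modules --- this is exactly where ``separably closed'' enters, and your appeal to absolute simplicity leaves it unjustified. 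Third, in (2) the assertion that ``the composition factors along a maximal chain are in bijection with the special primes'' is the actual content to be proved: the paper orders the special primes $P_1,\dots,P_m$ by decreasing height, sets $Q_j=P_1\cap\cdots\cap P_j$, and shows each factor $\Ann_{E}Q_j/\Ann_{E}Q_{j-1}$ is nonzero (Frobenius acts injectively on it by anti-nilpotence of $\lc^d_\m(A)$, so $\scr{H}_{R,A}$ does not kill it) and simple (an intermediate $A\{f\}$-submodule would have as annihilator an intersection of special primes strictly between $Q_{j}$ and $Q_{j-1}$, which the height ordering forbids). Without that argument, and without noting that nilpotent factors are destroyed by $\scr{H}_{R,A}$, the count could a priori differ from $m$.
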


One ought to remark that there is an effective algorithm to compute special primes of $\lc_\m^d(A)$ (\cite{KatzmanZhangAlgorithmAnnihilatorAritinian}), hence the result above provides a practical tool to compute $l_{F_R}(\lc_I^{n-d}(R))$ when $A=R/I$ is $F$-pure and quasi-Gorenstein.

We also construct the first example of a local cohomology module over an algebraically closed field whose $D(R,k)$-module length disagrees with its $F$-module length.
\begin{theorem}[Proposition \ref{proposition--D-length bigger than F-length 2}]
\label{theorem: main example}
Let $R = \overline{\mathbb{F}}_{p}[x, y, z, t]$ with $p \equiv 4 \pmod 7$ and $f = tx^7 + ty^7 + z^7$. Then
\[
l_{F_R}(\lc^1_{f}(R))  = 3 < 7 = l_{F^\infty_R}(\lc^1_{f} (R))=l_{D(R,\overline{\mathbb{F}}_p)} (\lc^1_{f} (R)).
\]
\end{theorem}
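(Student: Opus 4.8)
The plan is to compute $\lc^1_f(R)$ explicitly as an $F$-module and as a $D(R,\overline{\mathbb{F}}_p)$-module, where $R=\overline{\mathbb{F}}_p[x,y,z,t]$ and $f=tx^7+ty^7+z^7$. Since $f$ is a single equation, we have the exact sequence $0\to R\to R_f\to \lc^1_f(R)\to 0$, so $\lc^1_f(R)\cong R_f/R$, and its $F$-module structure is governed by the generator $1/f$ together with the Frobenius action, which on $R_f/R$ amounts to multiplication by $f^{p-1}$ after raising to the $p$-th power (this is the root-morphism description of $F$-modules reviewed in \S\ref{section: preliminaries}). The key observation is that the $F$-module length of $\lc^1_f(R)$ is controlled by the stable part of $\lc^j_\m$ of $A=R/(f)$ via part (2) of the theorem above: since $f$ defines a hypersurface, $A$ is quasi-Gorenstein of dimension $d=3$, and when $A$ is $F$-pure the $F$-module length of $\lc^1_f(R)$ equals the number of special primes of $\lc^3_\m(A)$. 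So the first step is to check that $A=\overline{\mathbb{F}}_p[x,y,z,t]/(tx^7+ty^7+z^7)$ is $F$-pure for $p\equiv 4\pmod 7$ — this follows from Fedder's criterion, i.e. $f^{p-1}\notin \m^{[p]}$, which reduces to a combinatorial check on the monomial $tx^7+ty^7+z^7$ raised to the $(p-1)$-st power; the congruence $p\equiv 4\pmod 7$ is exactly what makes this work.

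Next I would count the special primes of $\lc^3_\m(A)$. By the algorithm of Katzman–Zhang (\cite{KatzmanZhangAlgorithmAnnihilatorAritinian}), or by a direct analysis of the Frobenius action on $\lc^3_\m(A)$ (which, since $A$ is quasi-Gorenstein, is a "Cartier module" dual to the natural Frobenius on $A$ itself), the special primes correspond to the Frobenius-compatible ideals / the prime components visible after iterating the Cartier operator. For the Fermat-type hypersurface $z^7 = -t(x^7+y^7)$ the projective plane curve $x^7+y^7+$(unit)$=0$ in the fiber, and the behaviour of $p$ modulo $7$, determines how many such primes appear; the expectation is that this count is exactly $3$. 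This is the arithmetic heart of the example: one needs $p\equiv 4\pmod 7$ so that $4$ has multiplicative order $3$ modulo $7$, which produces a length-$3$ orbit structure and hence $3$ special primes rather than $1$ or $7$.

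For the $D$-module side, I would instead use part (1)-type ideas, or directly Blickle's theorem that over an algebraically closed field $l_{D(R,k)}(M)=l_{F^\infty_R}(M)$ for any $F$-finite $F$-module $M$; so it suffices to compute the $F^\infty$-module length of $\lc^1_f(R)$. The $F^\infty$-module structure remembers the full semilinear action, and here the relevant object is the associated étale sheaf / the Frobenius-action on a suitable finite-dimensional $\overline{\mathbb{F}}_p$-vector space coming from $\lc^3_\m(A)$; because $\overline{\mathbb{F}}_p$ is algebraically closed, every simple $F^\infty$-subquotient becomes "diagonalizable" and the length jumps to the full dimension of this space, which for the Fermat equation $x^7+y^7+z^7$-type object is $7$ (related to the $7$ seventh roots of unity now available in $\overline{\mathbb{F}}_p$, versus the $3$-element Frobenius orbits over the prime field data). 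So the computation is: $l_{F_R}=3$ (number of special primes), $l_{F^\infty_R}=l_{D_R}=7$ (dimension of the relevant stable space over the algebraically closed field).

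The main obstacle will be the second step — the explicit count of special primes and the matching dimension computation. Getting $3$ and $7$ requires carefully tracking the Frobenius action on $\lc^3_\m(A)$ for this specific $f$ and the specific congruence class of $p$; in particular one must verify that no further degeneration occurs (e.g. that the three special primes are genuinely distinct and that the $F$-module length does not collapse further, and that the $F^\infty$-length is exactly $7$ and not larger). I expect this to come down to an eigenvalue/Jordan-form analysis of a small explicit matrix over $\overline{\mathbb{F}}_p$, of the kind that the algorithm in \cite{KatzmanZhangAlgorithmAnnihilatorAritinian} mechanizes, together with Fermat-curve arithmetic modulo $7$; everything else (the exact sequence, Fedder's criterion, the reduction via parts (1)–(2) of the theorem and Blickle's comparison) is routine.
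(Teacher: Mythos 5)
Your approach breaks down at the very first step: the ring $A=\overline{\mathbb{F}}_p[x,y,z,t]/(tx^7+ty^7+z^7)$ is \emph{not} $F$-pure, so Theorem \ref{theorem--lower bounds of F-module length for F-pure}(2) cannot be applied to read off $l_{F_R}(\lc^1_f(R))$ as a count of special primes. Indeed, Fedder's criterion fails: writing $p=7k+4$, every monomial $t^{a+b}x^{7a}y^{7b}z^{7c}$ in $f^{p-1}$ has $a+b+c=7k+3$, so at least one of $a,b,c$ exceeds $k$ and the corresponding variable appears with exponent $\geq 7(k+1)>p$; hence $f^{p-1}\in\m^{[p]}$. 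Equivalently (and this is how the paper sees it), assigning weights $\deg x=\deg y=1$, $\deg z=2$, $\deg t=7$ one checks that the natural Frobenius acts \emph{nilpotently} on the $6$-dimensional space $\lc^3_\m(A)_0$, so $A$ is not even $F$-injective. Without $F$-purity the ``number of special primes'' is not a lower bound for the $F$-length (the paper's remark after Theorem \ref{theorem--lower bounds of F-module length for F-pure} gives a Calabi--Yau hypersurface where the count overshoots), so your proposed route to the value $3$ has no support.

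The second gap is structural: the singular locus of $A$ is the one-dimensional set $V(x,y,z)$, not an isolated point, so the computation must go through the filtration $0\subseteq L\subseteq M\subseteq\lc^1_f(R)$ of Claim \ref{claim--graded F-module filtration} (from Theorem \ref{theorem--Upper bounds for D-module length when singular locus has dimension 1}). Since $(\lc^3_\m(A))_s=0$, the top quotient vanishes and everything reduces to the piece supported at $P=(x,y,z)$; localizing and completing there (Lemma \ref{lemma--localizing and completing F-module}) one lands in $\overline{\mathbb{F}}_p(t)[[x,y,z]]/(tx^7+ty^7+z^7)$, whose residue field $\overline{\mathbb{F}}_p(t)$ is \emph{not} perfect. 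This is the entire point of the example, and your proposal misses it: over the algebraically closed base field Blickle's theorem forces $l_{F^\infty}=l_D$, but after localization the Frobenius matrices acquire entries $t^{3k+1},t^{5k+2},t^{6k+3}$, and the paper's explicit determinant/degree computation shows each of the two $3$-dimensional Frobenius cycles in the resulting $6$-dimensional stable space has no proper Frobenius-stable subspace, giving $F$-length $1+2=3$, while the third Frobenius iterate is diagonal, giving $F^\infty$-length $1+6=7$ (via Proposition \ref{proposition--Fmatrix}). Your heuristic ``$7$ seventh roots of unity'' and ``Jordan form over $\overline{\mathbb{F}}_p$'' do not engage with this semilinear algebra over $\overline{\mathbb{F}}_p(t)$, which is where both numbers actually come from.
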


While finding the actual length remains elusive in its full generality, we provide both upper and lower bounds.

\begin{theorem}[Theorem \ref{theorem: length of localization}]
\label{theorem: general bound on hypersurface}
Let $R=k[x_1,\dots,x_n]$ be a polynomial ring over a field $k$ and $f\in R$ be a polynomial of degree $d$. Then
\[l_{D(R,k)}(\lc^1_{f}(R))\leq (d+1)^n-1.\]
\end{theorem}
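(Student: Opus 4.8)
The plan is to bound $l_{D(R,k)}(\lc^1_f(R))$ by a direct analysis of $D$-submodules of $R_f/R$, using the $\bN^n$-grading that comes from inverting the single polynomial $f$ together with the action of the Euler/differentiation operators. First I would recall the short exact sequence $0 \to R \to R_f \to \lc^1_f(R) \to 0$ of $D(R,k)$-modules, so that $l_{D(R,k)}(\lc^1_f(R)) = l_{D(R,k)}(R_f) - 1$ (here $R$ is simple as a $D$-module, being the ring itself, so it contributes exactly $1$). Thus it suffices to prove $l_{D(R,k)}(R_f) \le (d+1)^n$. The idea for this is a filtration of $R_f$ whose successive quotients are controlled: writing $f$ as having degree $d$, one filters $R_f$ by the ``pole order'' (the power of $f$ in the denominator), i.e.\ by $R_f^{(\le m)} := \{g/f^m : g \in R\}$, and observes that each $D$-submodule of $R_f$ containing $R$ is determined in each pole-order layer by certain data living in a finite-dimensional space of numerators modulo $f$; the differential operators lower the pole order or permute these data, and the total amount of such data is bounded by a count of lattice points.

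Concretely, the second step is to make the counting precise. The associated graded of the pole-order filtration is a module over $\gr D(R,k) \cong R[\xi_1,\dots,\xi_n]$, and one checks that $\gr_m(R_f) \cong (R/fR) \cdot f^{-m}$ as an $R$-module, with the symbol $\xi_i$ acting essentially by multiplication by $\partial f/\partial x_i$. The key structural input is that, for a reduced $f$ (or after passing to $f_{\mathrm{red}}$, which does not change $R_f$), the only $D$-stable behavior is captured by a Bernstein--Sato type argument: the operator $f\partial_i$ and the Euler operator act on $R_f/R$ in a way whose Jordan--H\"older constituents are indexed by roots of the $b$-function shifted into a box. However, to get the clean polynomial bound $(d+1)^n - 1$ rather than something involving Bernstein--Sato roots, I would instead argue combinatorially: filter $R_f$ by $R$-submodules $M_a$ indexed by $a = (a_1,\dots,a_n) \in \{0,1,\dots,d\}^n$, built from the monomials $x^a$ multiplied into increasing powers of $f^{-1}$, arranged so that each $D$-module composition factor of $R_f$ is ``seen'' by at least one index $a$ in the box $\{0,\dots,d\}^n$; the box has $(d+1)^n$ points, giving the bound.

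The third step is to verify that this combinatorial filtration actually refines into, or dominates, a $D$-module composition series. The mechanism is that $R_f$ is generated over $D(R,k)$ by $1/f$, and applying products of the operators $x_j$ and $\partial_i$ to $1/f$ produces elements whose numerators, reduced modulo the relevant power of $f$, lie in the span of $x^a$ with each $a_i \le (\text{something linear in }d)$; a degree bookkeeping argument — tracking how $\partial_i(g/f^m) = (\partial_i g)/f^m - m g (\partial_i f)/f^{m+1}$ changes numerator degree versus denominator exponent — shows the relevant numerators, normalized appropriately, stay within the $(d+1)^n$ box. Then any strictly increasing chain of $D$-submodules of $R_f$ injects into the poset of ``numerator supports'' inside this box, bounding the length.

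The main obstacle I expect is the third step: controlling the interaction between raising the pole order (which $\partial_i$ does) and the reduction modulo powers of $f$, so that the numerators genuinely stay inside a box of side $d$ and the bound does not leak extra factors. Equivalently, one must show that no $D$-module composition factor of $R_f$ requires ``fresh'' numerator data beyond the box — this is really a statement that the $b$-function of $f$ has all its roots in an interval of length roughly $n$ (a theorem of Koll\'ar / Saito on the size of Bernstein--Sato roots), translated into the monomial count $(d+1)^n$. Making that translation tight, rather than losing constants, is the delicate point; everything else is the standard exact-sequence reduction and the elementary lattice-point count. I would therefore structure the proof as: (1) reduce to $l_{D(R,k)}(R_f) \le (d+1)^n$ via the exact sequence; (2) set up the pole-order/numerator filtration and identify its layers; (3) invoke the root-size bound for the $b$-function of a degree-$d$ polynomial to confine the composition factors to the box; (4) conclude by counting.
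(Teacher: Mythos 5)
Your step (1) — reducing to $l_{D(R,k)}(R_f)\le (d+1)^n$ via the exact sequence $0\to R\to R_f\to \lc^1_f(R)\to 0$ and the simplicity of $R$ — matches the paper and is fine. The rest of the argument has a genuine gap. Steps (2)--(4) hinge on two claims that are not established and that I do not see how to establish: first, that every $D$-module composition factor of $R_f$ is ``seen'' by an index in the box $\{0,\dots,d\}^n$, and second, that a strictly increasing chain of $D$-submodules of $R_f$ injects into the poset of ``numerator supports'' in that box. $D$-submodules of $R_f$ are in no obvious way determined by monomial supports of numerators modulo powers of $f$, so the counting step does not follow from the filtration you describe. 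Moreover, you propose to supply the missing input via the $b$-function of $f$ and the Koll\'ar--Saito bounds on its roots; but the theorem is stated over a field of arbitrary characteristic (the whole point of Section 3 of the paper is a characteristic-free bound), and Bernstein--Sato theory in the form you invoke is a characteristic-zero tool. Even in characteristic zero, the number and size of $b$-function roots of a degree-$d$ polynomial grow linearly, not like $(d+1)^n$, so the advertised ``translation into the monomial count'' is not a translation of any known statement.

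The paper's route is much more elementary and entirely different from your steps (2)--(4): it uses Lyubeznik's characteristic-free holonomicity bound. One puts the Bernstein filtration $\{\scr F_i\}$ on $D(R,k)$ and observes that if a $D(R,k)$-module $M$ admits a $k$-filtration $M_0\subset M_1\subset\cdots$ with $\dim_k M_i\le Ci^n$ for all sufficiently large $i$, then $l_{D(R,k)}(M)\le n!\,C$. For $R$ itself, $\dim_k R_i=\binom{n+i}{i}\le \frac{1+\varepsilon}{n!}i^n$ for large $i$, and the induced filtration $R'_i=\{m/f^i: m\in R_{i(d+1)}\}$ on $R_f$ satisfies $\dim_k R'_i\le \frac{1+\varepsilon}{n!}(d+1)^n i^n$ for large $i$. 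Since length is an integer and $\varepsilon>0$ is arbitrary, $l_{D(R,k)}(R_f)\le (d+1)^n$, and the exact sequence finishes the proof. The only subtlety is insisting on the bound holding merely for large $i$ (rather than all $i$), which is what allows the constant $(d+1)^n$ to come out clean. If you want to salvage your write-up, replace steps (2)--(4) with this multiplicity estimate.
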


%\marginnote{Characteristic=p}
\begin{theorem}[Theorem \ref{theorem--Upper bounds for D-module length when singular locus has dimension 1}]
\label{theorem: formulas and bounds}
Let $R=k[x_1,\dots,x_n]$ and $\m=(x_1,\dots,x_n)$. Let $I$ be a homogeneous reduced and equidimensional ideal of $R$. Set $A=R/I$ with $\dim A=d\geq 2$. Suppose the non-$F$-rational locus of $A$ has dimension $\leq1$ (e.g., the nonsingular locus has dimension $\leq1$). Then we have
\begin{eqnarray*}
l_{D(R,k)}(\lc_I^{n-d}(R))&\leq &c+\sum_{\dim R/P=1}\dim_{\kappa(P)} (\lc_{P\widehat{A_P}}^{d-1}(\widehat{A_P}))_s+\dim_k({\lc^d_{\fm}(A)})_s\\
&=&c+\sum_{\dim R/P=1} \dim_{\kappa(P)}(0^*_{\lc_{P\widehat{A_P}}^{d-1}(\widehat{A_P})})_s+\dim_k(0^*_{\lc^d_{\fm}(A)})_s
\end{eqnarray*}
where $c$ is number of minimal primes of $I$.
\end{theorem}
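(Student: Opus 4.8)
The plan is to reduce the computation of $l_{D(R,k)}(\lc_I^{n-d}(R))$ to a filtration argument that localizes the singularities of $A=R/I$ and then applies the isolated-singularity formula (Theorem~\ref{theorem--D-module length for isolated singularities}) to each local ring $\widehat{A_P}$ at the finitely many primes $P$ with $\dim R/P=1$, together with a final contribution at the closed point $\fm$. First I would recall (from the $D$-module machinery reviewed in \S\ref{section: preliminaries}, in the spirit of Lyubeznik's work) that for a $D(R,k)$-module $M$ supported on a closed set, one has a notion of characteristic cycle / composition-series behavior that is compatible with localization: if $P$ is a prime then $(\lc_I^{n-d}(R))_P$ is a $D(R_P,k)$-module, and the ``drop in length'' as one passes from the generic point of a component to $P$ is governed by the local cohomology of $\widehat{A_P}$. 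Concretely, the standard fact is that $\lc_I^{n-d}(R)$ has a composition series whose factors are the simple $D$-modules $\lc^{c_j}_{P_j}(R)^{\oplus ?}$ as $P_j$ ranges over (a subset of) primes containing $I$; the contribution of a minimal prime is exactly one copy of the corresponding simple module, giving the summand $c$.

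The key steps, in order, are: (1) Establish the local structure: for each prime $P\supseteq I$ with $\dim R/P=1$, the hypothesis that the non-$F$-rational locus of $A$ has dimension $\le 1$ forces $A_P$ (equivalently $\widehat{A_P}$) to have an isolated non-$F$-rational point at its maximal ideal. Apply Theorem~\ref{theorem--D-module length for isolated singularities} to $\widehat{A_P}$: this identifies the ``local contribution at $P$'' to the $D$-module length as $\dim_{\kappa(P)}(0^*_{\lc_{P\widehat{A_P}}^{d-1}(\widehat{A_P})})_s = \dim_{\kappa(P)}(\lc_{P\widehat{A_P}}^{d-1}(\widehat{A_P}))_s$, the stable part of the top local cohomology of the $(d-1)$-dimensional ring $\widehat{A_P}$. (2) Assemble these via a localization/co-support argument: outside $\fm$, the module $\lc_I^{n-d}(R)$ is ``built'' from the generic simple pieces (accounting for $c$) plus exactly the finitely many extra simple factors supported at the $1$-dimensional primes, each occurring with the multiplicity computed in step (1); this is where one uses that the bad locus has dimension $\le 1$, so no $2$-dimensional strata of singularities contribute. (3) Handle the closed point: the remaining contribution, localized at $\fm$ and after completion, is governed by $\lc_\fm^d(A)$ and contributes $\dim_k(\lc^d_\fm(A))_s = \dim_k(0^*_{\lc^d_\fm(A)})_s$; this again invokes the tight-closure interpretation of the stable part established for the isolated-singularity case. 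Summing the three contributions yields the displayed formula, and the equality of the two lines is just the identification $(\lc^{i}_{\ast})_s = (0^*_{\lc^{i}_{\ast}})_s$ from the isolated (or isolated-after-localization) hypothesis.

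The main obstacle I expect is step (2): controlling how the $D$-module length ``patches'' across the stratification, i.e.\ proving that the global length is precisely the sum of the generic part and the contributions at each $1$-dimensional prime with no hidden cross-terms. This requires a clean statement that, for a holonomic $D(R,k)$-module whose singular support has dimension $\le 1$ over the support of $A$, the composition length is additive over the strata — essentially a co-localization exact sequence $0\to \lc^0_\fm(\text{of a twist})\to (\text{global})\to \bigoplus_P(\text{localized})\to \cdots$ together with the fact that the simple $D$-modules $\lc^{\bullet}_{P}(R)$ are pairwise non-isomorphic and have no nontrivial self-extensions relevant here. I would prove this by induction on $\dim A$ using the same Mayer--Vietoris / excision technique that underlies Theorem~\ref{theorem--D-module length for isolated singularities}, peeling off the closed point and then the $1$-dimensional locus, and at each stage identifying the kernel/cokernel with an explicit local cohomology module of $A$ whose $D$-length has already been computed. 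The separate inequality ``$\le$'' (rather than equality) in the statement suggests that one direction — that these are the only contributions — is where an inequality is genuinely needed, presumably because some of the putative simple factors might coincide or fail to appear, so it suffices to bound the length by the sum rather than match it exactly.
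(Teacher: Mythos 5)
The overall architecture you propose --- a three-layer filtration of $\lc_I^{n-d}(R)$ peeling off the contributions at the minimal primes, at the one-dimensional primes, and at the closed point --- is exactly the paper's strategy, and your steps (1) and (3) land on the right quantities. But your step (2), which you yourself flag as the main obstacle, is genuinely unresolved, and the mechanism you sketch for it (characteristic cycles, additivity of composition length over strata, absence of self-extensions among the simple pieces) is neither available in characteristic $p$ nor what the paper uses. Two concrete inputs are missing. First, the existence of the intermediate $F_R$-submodule $M$ with $\lc_I^{n-d}(R)/M$ supported only at $\fm$ and every composition factor of $M/L$ supported at a one-dimensional prime: the paper obtains this by applying the graded filtration theorem of Lyubeznik--Singh--Walther to ${}^*\scr{H}_{R,A}(0^*_{\lc_\fm^d(A)})$, after first splitting off $L={}^*\scr{H}_{R,A}(\lc_\fm^d(A)/0^*_{\lc_\fm^d(A)})$, which by Blickle's results is a direct sum of $c$ simple $D(R,k)$-modules. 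The paper explicitly remarks that it does not know how to produce such an $M$ outside the graded setting, so this is a substantive input tied to the homogeneity hypothesis, not a routine d\'evissage; your "co-localization exact sequence" does not supply it.

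Second, the count at each one-dimensional prime $P$ is not done by invoking Theorem \ref{theorem--D-module length for isolated singularities} for $\widehat{A_P}$; rather, one takes a $D(R,k)$-composition series of $M/L$, localizes at $P$ and completes, and observes that each simple factor $N_i$ with $\Ass N_i=P$ becomes a nonzero direct sum of copies of $E(\widehat{R_P}/P\widehat{R_P})$, so the number of such factors is at most $\crk \lc^{n-d}_{I\widehat{R_P}}(\widehat{R_P})=\dim_{\kappa(P)}(\lc^{d-1}_{P\widehat{A_P}}(\widehat{A_P}))_s$. This uses Lemma \ref{lemma--localizing and completing F-module} (to identify $L\otimes\widehat{R_P}$ with the corank-zero part) and Lyubeznik's corank formula. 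Relatedly, your diagnosis of why only an inequality holds is off: the loss is not that "putative factors might coincide or fail to appear," but that a single simple $D(R,k)$-module $N_i$ could become \emph{several} copies of $E(\widehat{R_P}/P\widehat{R_P})$ after localization and completion (compare Example \ref{example: completion of simple no longer simple}, where a simple $D$-module fails to stay simple after localization and completion), so each $N_i$ may absorb more than one unit of corank. Without these two points your argument does not close.
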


%\marginnote{Characteristic=p}
\begin{theorem}[Theorem \ref{theorem--lower bounds of F-module length for F-pure}]
\label{theorem: F-length for F-pure}
Let $k$ be a field of positive characteristic, let $R$ denote the local ring $k[[x_1, \ldots, x_n]]$
(or the graded ring $k[x_1, \ldots, x_n]$), and $\m = (x_1, \ldots, x_n)$.
Let $A=R/I$ be reduced, equidimensional
(or, respectively, graded reduced and equidimensional), and $F$-pure of dimension $d\geq 1$.
Then  $l_{F_R}(\lc_I^{n-j}(R))$ is at least the number of special primes of $\lc_\m^j(A)$.
\end{theorem}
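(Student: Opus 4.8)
The plan is to exploit Matlis duality to convert the local cohomology module $\lc_I^{n-j}(R)$ into the $\m$-adic completion of $\lc_\m^j(A)$, and then to use the $F$-pure hypothesis to produce enough $F$-submodules to force the length to be at least the number of special primes. First I would recall that, since $A = R/I$ is reduced and equidimensional of dimension $d$ with $R$ regular of dimension $n$, local duality identifies $\lc_I^{n-j}(R)$ with the Matlis dual of $\lc_\m^j(A)$ (completing if we are in the graded case and using graded local duality there); under this identification the natural $F$-module (i.e.\ unit Frobenius) structure on $\lc_I^{n-j}(R)$ corresponds to the Frobenius action on $\lc_\m^j(A)$ coming from $F$-purity of $A$, as in Lyubeznik's and Blickle--B\"ockle's framework. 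The point of switching sides is that $F$-submodules of $\lc_I^{n-j}(R)$ correspond to Frobenius-stable submodules of $\lc_\m^j(A)$ that are also $R$-submodules, i.e.\ to the Frobenius-compatible submodules, and these are precisely governed by the special primes.

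Next I would bring in the theory of special primes of $\lc_\m^j(A)$: by the work of Katzman and Zhang (and the Enescu--Hochster / Sharp circle of ideas) the Frobenius-stable $A$-submodules of $\lc_\m^j(A)$ are exactly the annihilators $\operatorname{Ann}_{\lc_\m^j(A)}(\mathfrak{p})$ as $\mathfrak{p}$ ranges over radical ideals that are intersections of special primes; in particular, if there are $N$ special primes $\mathfrak{p}_1,\dots,\mathfrak{p}_N$, one gets a chain of distinct Frobenius-stable submodules of length $N$ by intersecting them one at a time. Dualizing this chain back through Matlis duality yields a strictly increasing chain of $F$-submodules of $\lc_I^{n-j}(R)$ of length $N$, each quotient being a nonzero $F$-module, which gives $l_{F_R}(\lc_I^{n-j}(R)) \geq N$. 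The $F$-purity hypothesis enters twice: it guarantees that the Frobenius action on $\lc_\m^j(A)$ is injective (so that these annihilator submodules are genuinely $F$-stable and nonzero), and it is what makes the notion of special prime well-behaved in the first place.

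The main obstacle I anticipate is the careful matching of structures under duality: one must check that Matlis (or graded Matlis) duality really does send the unit $F_R$-module structure on $\lc_I^{n-j}(R)$ to the Frobenius action on $\lc_\m^j(A)$ in a way that takes $F_R$-submodules bijectively to Frobenius-stable $A$-submodules, and not merely to Frobenius-stable $k$-subspaces; this requires knowing that the submodules produced from special primes are $R$-submodules (which is where the ideal-theoretic description of special primes is essential) and that finiteness of $F_R$-length on the one side transfers to the right descending-chain behavior on the other. A secondary technical point is handling the graded case, where one passes to the completion $\widehat{A}$; one needs that special primes of $\lc_\m^j(A)$ are in bijection with those of $\lc_{\widehat{\m}}^j(\widehat{A})$ and that completion does not change $l_{F_R}$, both of which are standard but should be stated. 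Once the dictionary is set up, the construction of the length-$N$ chain is essentially formal.
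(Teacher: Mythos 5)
Your overall strategy---pass from $\lc_I^{n-j}(R)$ to $\lc_\m^j(A)$ via a duality-type construction and build a chain of Frobenius-stable submodules out of the special primes---points in the right direction, but as written the argument has three genuine gaps. First, the basic identification is wrong: $\lc_I^{n-j}(R)$ is \emph{not} the Matlis dual of $\lc_\m^j(A)$; that dual is the finitely generated (or complete) module $\Ext_R^{n-j}(A,R)$, whereas $\lc_I^{n-j}(R)=\scr{H}_{R,A}(\lc_\m^j(A))$ is the direct limit of the Frobenius iterates of that dual. Because $\scr{H}_{R,A}$ annihilates every subquotient on which Frobenius acts nilpotently, the ``dictionary'' you invoke---a bijection between $F_R$-submodules of $\lc_I^{n-j}(R)$ and Frobenius-stable $A$-submodules of $\lc_\m^j(A)$---does not exist in general. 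Second, the classification of $f$-stable submodules as $\Ann(\mathfrak p)$ for $\mathfrak p$ an intersection of special primes is valid for $E_A$ (this is why the paper's \emph{exact} count requires $A$ quasi-Gorenstein), but not for an arbitrary Artinian $A\{f\}$-module such as $\lc_\m^j(A)$: the map from $f$-stable submodules to their annihilators is neither injective nor surjective onto annihilator submodules, and $\Ann_W(\mathfrak a)$ need not be $f$-stable. This particular gap is repairable: choosing for each special prime $P_i$ a submodule $N_i$ with $\Ann N_i=P_i$ and setting $V_i=N_1+\cdots+N_i$ gives a strictly increasing chain of $N$ $f$-stable submodules, since the annihilators $P_1\cap\cdots\cap P_i$ are distinct.

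The third gap is the serious one. To conclude that such a chain produces $N$ nonzero $F_R$-composition factors of $\lc_I^{n-j}(R)$, you must show that the Frobenius action on each quotient $V_i/V_{i-1}$ is \emph{not nilpotent}; otherwise $\scr{H}_{R,A}(V_i/V_{i-1})=0$ and that link of the chain collapses. Injectivity of Frobenius on $\lc_\m^j(A)$ itself (i.e.\ $F$-injectivity) does not give this; what is needed is \emph{anti-nilpotency} of $\lc_\m^j(A)$, which is precisely what $F$-purity supplies via \cite[Theorem 3.7]{MaFinitenesspropertyoflocalcohomologyforFpurerings}, and this is the step your proposal omits. The remark following the theorem in the paper (a non-$F$-pure Calabi--Yau hypersurface) shows the bound genuinely fails without it. For comparison, the paper's proof does not build a chain at all: for each special prime $P$ with $\Ann N=P$, anti-nilpotency forces $F(N)\twoheadrightarrow N$ to be surjective, so $N^\vee$ is a root of $\scr{H}_{R,A}(N)$ and $P\in\Ass\scr{H}_{R,A}(N)$ is minimal; hence $\scr{H}_{R,A}(N)$, and therefore $\lc_I^{n-j}(R)$ which surjects onto it, has a simple $F_R$-composition factor whose unique associated prime is $P$, and distinct special primes yield distinct factors.
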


The upper and lower bounds on the $D(R,k)$-module and $F_R$-module length in the above results are sharp in many cases (see \S \ref{section: formulas and bounds}, \S \ref{section: F-pure}, \S \ref{section: diagonal hypersurfaces}),
 and we can explicitly describe an $F_R$-submodule filtration of $\lc_I^{n-d}(R)$ in terms of the generating morphisms when $R/I$ is Cohen-Macaulay, which is maximal when $R/I$ is Gorenstein and $F$-pure (see Theorem \ref{Theorem: F-length for CM rings} and \ref{Theorem: F-length for Gorenstein rings}).

We also construct an example (Example \ref{example: completion of simple no longer simple}) of a simple $D(R,k)$-module whose completion at a prime ideal $P$ is not a simple $D(\widehat{R}_P,\kappa(P))$-module.

Our paper is organized as follows. In Section \ref{section: preliminaries}, we recall some basic notions and results regarding $D(R,k)$-modules, $F$-modules, and tight closure theory. Section \ref{section: general bound} is concerned with Theorem \ref{theorem: general bound on hypersurface}. Section \ref{section: formulas and bounds} is devoted to proving Theorem \ref{theorem: formulas and bounds}. In Section \ref{section: F-pure} we prove Theorem \ref{theorem: F-length for F-pure}, we also describe explicitly the maximal $F_R$-module filtration of $\lc_I^{n-d}(R)$ in terms of their generating morphisms when $R/I$ is Gorenstein and $F$-injective (the Cohen-Macaulay $F$-injective case will also be discussed).  In Section \ref{section: diagonal hypersurfaces}, we compute the dimension of the stable part (under the natural Frobenius action) of the top local cohomology of Fermat hypersurfaces. \S \ref{section: examples} proves Theorem \ref{theorem: main example} and related results. Examples and remarks showing the sharpness of our bounds will be given throughout.

\subsection*{Acknowledgements}
This material is based partly upon work supported by the National Science Foundation under Grant No.1321794;
some of this work was done at the Mathematics Research Community (MRC) in Commutative Algebra in June 2015.
The second, third and fourth authors would like to thank the staff and organizers of the MRC and the American Mathematical Society for the support provided.
The first author thanks the Department of Mathematics of the University of Illinois at Chicago for its hospitality while working on this and other projects. The authors thank the referee for valuable comments.

\section{Preliminaries}
\label{section: preliminaries}
Throughout this paper, we always assume that $R=k[[x_1,\dots, x_n]]$ or $R=k[x_1,\dots,x_n]$ where $k$ is a field (not necessarily algebraically closed or perfect), and $A$ is a reduced and equidimensional quotient or graded reduced and equidimensional quotient of $R$ of dimension $d\geq 1$. We set $\fm=(x_1,\dots,x_n)$. In this section we collect some notations and preliminary results from \cite{LyubeznikFModulesApplicationsToLocalCohomology} and \cite{BlickleIntersectionhomologyDmodule}.

\subsection*{$D$-modules} The differential operators $\delta$: $R\to R$ of order $\leq n$ can be defined inductively as follows. A differential operator of order $0$ is just multiplication by an element of $R$. A differential operator of order $\leq n$ is an additive map $\delta$: $R\to R$ such that for every $r\in R$, the commutator $[\delta, r]=\delta\circ r-r\circ\delta$ is a differential operator of order $\leq n-1$. The differential operators form a ring with the multiplication defined via the composition. We denote this ring by $D_R$.

We denote by $D(R,k)\subseteq D_R$ the subring of $D_R$ consisting of all $k$-linear differential operators. Since $R=k[[x_1,\dots, x_n]]$ or $R=k[x_1,\dots,x_n]$, it can be verified that $D(R, k)$ is generated by all operators of the form $\frac{1}{j!}\frac{\partial^{j}}{\partial x_i^{j}}$. By a {\it $D_R$-module} or a {\it $D(R,k)$-module} we mean a left module over $D_R$ or $D(R,k)$.

When $k$ is a field of characteristic $p$, it is not hard to show that every differential operator of order $\leq p^e-1$ is $R^{p^e}$-linear, where $R^{p^e}\subseteq R$ is the subring of all the $p^e$-th powers of all the elements of $R$. In other words, we always have $D_R$ is a subring of $\cup_e\Hom_{R^{p^e}}(R,R)$.\footnote{In fact $D_R=\cup_{e}\Hom_{R^{p^e}}(R,R)$ when $R$ is $F$-finite, i.e., $R$ is finitely generated as a $R^{p}$-module \cite{YekutieliExplicitconstructionofResidueComplex}.} In particular, all differential operators are automatically $k$-linear if $k$ is perfect, thus $D_R=D(R,k)$ if $k$ is perfect.

\subsection*{$F$-modules} Assume that $k$ is a field of characteristic $p$. The notion of $F$-modules was introduced in \cite{LyubeznikFModulesApplicationsToLocalCohomology}, and further investigated and generalized in \cite{BlickleThesis, BlickleDmodulestructureofRFmodules}. We use $R^{(e)}$ to denote the target ring of the $e$-th Frobenius map $F^e$: $R\rightarrow R$. We shall let $F^e(-)$ denote the Peskine-Szpiro's Frobenius functor from $R$-modules to $R$-modules. In detail, $F^e(M)$ is given by base change to $R^{(e)}$ and then identifying $R^{(e)}$ with $R$, i.e., $F^e(M)=R^{(e)}\otimes_R M$.

An {\it $F^e_R$-module} is an $R$-module $M$ equipped with an $R$-linear isomorphism $\theta$: $M\rightarrow F^e(M)$ which we call the structure morphism of $M$. A homomorphism of $F^e_R$-modules is an $R$-module homomorphism $f$: $M\rightarrow M'$ such that the following diagram commutes
\[\xymatrix{
M\ar[r]^{f}\ar[d]^{\theta} & M'\ar[d]^{\theta'}\\
F^e(M)\ar[r]^{F^e(f)}&F^e(M')\\
}\]
When $e=1$ we simply say $M$ is an $F_R$-module (or $F$-module if $R$ is clear from the context). It is easy to see that every $F^e_R$-module is also an $F^{er}_R$-module for every $r\geq 1$ by iterating the structure isomorphism $r$ times. The union of the categories of $F^e_R$-modules over all $e$ forms what we called the category of $F^\infty_R$-modules.\footnote{In \cite{BlickleThesis, BlickleDmodulestructureofRFmodules, BlickleIntersectionhomologyDmodule},
$F^e_R$-modules and $F^\infty_R$-modules are called unit $R[F^e]$-modules and unit $R[F]$-modules respectively. In this paper we will use Lyubeznik's notation \cite[Remark 5.6]{LyubeznikFModulesApplicationsToLocalCohomology} since we think this is more natural in comparison with the usual $F_R$-modules.} With these definitions, the categories of $F^e_R$-modules and $F^\infty_R$-modules are abelian categories.

When $R=k[x_1,\dots,x_n]$ and $M$ is a graded $R$-module, there is a natural grading on $F(M)=R^{(e)}\otimes_R M$ given by $\deg(r\otimes m)=\deg r+p^e\cdot\deg m$ for homogeneous elements $r\in R$ and $m\in M$. With this grading, a {\it graded $F^e_R$-module} is an $F^e_R$-module $M$ such that the structure isomorphism $\theta$ is degree-preserving. A morphism of graded $F^e_R$-modules is a degree-preserving morphism of $F^e_R$-modules. It is not hard to see that graded $F^e_R$-modules form an abelian subcategory of the category of $F^e_R$-modules. Graded $F^e_R$-modules, at least for $e=1$, were introduced and studied in detail in \cite{ZhangYGradedFModules} and \cite{MaZhangEuleriangradedDmodules}.

A generating morphism of an $F^e_R$-module $M$ is an $R$-module homomorphism $\beta$: $M_0\rightarrow F^e(M_0)$, where $M_0$ is some $R$-module, such that $M$ is the limit of the inductive system in the top row of the commutative diagram
\[\xymatrix{
M_0\ar[r]^{\beta}\ar[d]^{\beta} & F^e(M_0)\ar[d]^{F^e(\beta)}\ar[r]^{F^e(\beta)}& F^{2e}(M_0)\ar[r]^{\quad F^{2e}(\beta)}\ar[d]^{F^{2e}(\beta)}&{\cdots}\\
F^e(M_0)\ar[r]^{F^e(\beta)}& F^{2e}(M_0)\ar[r]^{F^{2e}(\beta)}&F^{3e}(M_0)\ar[r]^{\quad F^{3e}(\beta)}&{\cdots}\\
}\]
and $\theta$: $M\rightarrow F^e(M)$, the structure isomorphism of $M$, is induced by the vertical arrows in this diagram. $M_0$ is called a {\it root} of $M$ if $\beta$: $M_0\to F^e(M_0)$ is injective. An $F^e_R$-module $M$ is called {\it $F$-finite} if $M$ has a generating morphism $\beta$: $M_0\rightarrow F^e(M_0)$ with $M_0$ a finitely generated $R$-module. When $M_0$ is graded and $\beta$ is degree-preserving, we say that $M$ is {\it graded $F$-finite $F^e_R$-module}.

It is a fundamental result of Lyubeznik (\cite{LyubeznikFModulesApplicationsToLocalCohomology}) that local cohomology modules $\lc_I^i(R)$ have a natural structure of $F$-finite $F^e_R$-modules for every $e\geq 1$.
Moreover, when $R=k[x_1,\dots,x_n]$ and $I$ is a homogeneous ideal of $R$, $\lc_I^i(R)$ are graded $F$-finite $F^e_R$-modules \cite{ZhangYGradedFModules}.

Following \cite{LyubeznikFModulesApplicationsToLocalCohomology}, for any $F$-finite $F_R$-module $M$, there exists a smallest $F_R$-submodule $N\subseteq M$ with the property that $M/N$ is supported only at $\m$. Hence $M/N$ is isomorphic (as an $R$-module) to $E^{\oplus r}$ where $E=E_R(k)$ denotes the injective hull of $k$. We define $\crk(M)$, the {\it the corank of $M$,} to be $r$. %A similar construction works in the graded case, and in fact we have a stronger result in the graded case \cite{LyubeznikSinghWaltherlocalcohomologysupportedatdeterminantalideals}

One important feature of $F_R$-modules is that they have a natural structure of $D_R$-modules, and thus $D(R,k)$-modules. We briefly recall this here, and we refer to \cite[Section 5]{LyubeznikFModulesApplicationsToLocalCohomology} for details. Let $M$ be an $F_R$-module with structure isomorphism $\theta$. We set $\theta_e$ to be the $e$-th iterate of $\theta$, i.e., $$\theta_e=F^{e-1}(\theta)\circ\cdots\circ F(\theta)\circ\theta: M\to F^e(M). $$  Now every element $\delta\in\Hom_{R^{p^e}}(R,R)$ acts on $F^e(M)=R^{(e)}\otimes_RM$ via $\delta\otimes\id_M$. We let $\delta$ act on $M$ via $\theta_e^{-1}\circ(\delta\otimes\id_M)\circ\theta_e$. It is not very difficult to check that this action is well-defined. Moreover, an entirely similar construction shows that every $F^e_R$-module also have a canonical structure of a $D_R$-module. It follows that $F_R^\infty$-modules are naturally $D_R$-modules, and thus $D(R,k)$-modules. In sum, we have the following inclusion of abelian categories:

%\marginnote{$F_R^\infty$ needs definition: We defined $F_R^\infty$ on Page 5, the paragraph after the diagram. Do we need to say more words on it?--Linquan}

\[
\{F_R-\mbox{modules} \}\subseteq \{F_R^e-\mbox{modules}\} \subseteq \{F_R^\infty-\mbox{modules}\}\subseteq \{D_R-\mbox{modules}\} \subseteq \{D(R,k)-\mbox{modules}\}.
\]
Therefore for any $F_R$-module $M$ we have the following inequalities on its length considered in the corresponding categories:
\begin{equation}
\label{equation--basic relation on length in different categories}
l_{F_R}(M)\leq l_{F_R^e}(M)\leq l_{F_R^\infty}(M)\leq l_{D_R}(M)\leq l_{D(R,k)}(M).
\end{equation}

\subsection*{$A\{f\}$-modules}
Recall that we always assume $A$ is a reduced and equidimensional or graded reduced and equidimensional quotient of $R$ of dimension $d\geq 1$. Assume also that $k$ is a field of characteristic $p$. Let $M$ be a (in most cases Artinian) module over $A$. We say that $M$ is an $A\{f\}$-module if there is an additive map $f\colon M\to M$ such that $f(am)=a^pf(m)$. We will use $M_s = \bigcap k\langle f^i(M)\rangle$ to denote the Frobenius stable part of $M$, note that this is a $k$-vector space, and depends not only on $M$ but also on the action $f$ on $M$. It is well-known that when $M$ is either Noetherian or Artinian over $A$, $M_s$ is a finite dimensional $k$-vector space \cite{HartshorneSpeiserLocalCohomologyInCharacteristicP}, \cite{LyubeznikFModulesApplicationsToLocalCohomology}.

Let $\scr{H}_{R,A}$ denote the Lyubeznik functor introduced in \cite{LyubeznikFModulesApplicationsToLocalCohomology}: for every Artinian $A\{f\}$-module $M$, we have a natural induced map $\alpha$: $F_R(M)\to M$, since $F_R(M)^\vee\cong F_R(M^\vee)$ by \cite[Lemma 4.1]{LyubeznikFModulesApplicationsToLocalCohomology}, we define $$\scr{H}_{R,A}(M)=\varinjlim(M^\vee\xrightarrow{\alpha^\vee} F_R(M^\vee)\xrightarrow{F(\alpha^\vee)} F_R^2(M^\vee)\to\cdots),$$ which is an $F$-finite $F_R$-module. In the graded case we have a similar functor ${}^*\scr{H}_{R,A}$ that takes a graded Artinian $A\{f\}$-module to a graded $F$-finite $F_R$-module: one needs to replace Matlis dual by graded Matlis dual in the construction of ${}^*\scr{H}_{R,A}$, (see \cite{LyubeznikSinghWaltherlocalcohomologysupportedatdeterminantalideals} for details). One important example that we will use repeatedly is that $\scr{H}_{R, A}(\lc_{\fm}^d(A))\cong \lc_I^{n-d}(R)$, and in the graded case we also have ${^*}\scr{H}_{R, A}(\lc_{\fm}^d(A))\cong \lc_I^{n-d}(R)$ \cite[Example 4.8]{LyubeznikFModulesApplicationsToLocalCohomology}, \cite[Proposition 2.8]{LyubeznikSinghWaltherlocalcohomologysupportedatdeterminantalideals}.

\subsection*{Tight closure and $F$-singularities} Tight closure theory was introduced by Hochster-Huneke in \cite{HochsterHunekeTC1}. In this article, we only need some basic properties of tight closure of zero in the top local cohomology module, $0^*_{\lc_\m^d(A)}$. Under mild conditions, for example when $(A,\m)$ is an excellent local domain of dimension $d$, $0^*_{\lc_\m^d(A)}$ is the largest proper $A$-submodule of $\lc_\m^d(A)$ that is stable under the natural Frobenius action on $\lc_\m^d(A)$ (cf.~\cite{SmithFRatImpliesRat}). A local ring $(A,\m)$ is called {\it $F$-rational} if $A$ is Cohen-Macaulay and $0^*_{\lc_\m^d(A)}=0$.\footnote{This is not the original definition of $F$-rationality, but is shown to be equivalent (\cite{SmithFRatImpliesRat}).} Under mild conditions on the ring, for example when $(A,\m)$ is an excellent, reduced and equidimensional local ring, $A$ is $F$-rational on the punctured spectrum if and only if $0^*_{\lc_\m^d(A)}$ has finite length.

A local ring $(A,\m)$ of characteristic $p>0$ is called {\it $F$-pure} if the Frobenius endomorphism $F$: $A\rightarrow A$ is pure.\footnote{A map of $A$-modules $N\rightarrow N'$ is pure if for every $A$-module $M$ the map $N\otimes_AM\rightarrow N'\otimes_AM$ is injective. This implies that $N\rightarrow N'$ is injective, and is weaker than the condition that $0\rightarrow N\rightarrow N'$ be split.} Under mild conditions, for example when the Frobenius map $A\xrightarrow{F}A$ is a finite map or when $A$ is complete, $F$-purity of $A$ is equivalent to the condition that the Frobenius endomorphism $A\xrightarrow{F} A$ is split \cite[Corollary 5.3]{HochsterRobertsFrobeniusLocalCohomology}. The Frobenius endomorphism on $A$ induces a natural Frobenius action on each local cohomology module $\lc_\m^i(A)$ and we say $(A,\m)$ is {\it $F$-injective} if this natural Frobenius action on $\lc_\m^i(A)$ is injective for every $i$ (\cite{FedderFPureRat}). This holds if $A$ is $F$-pure \cite[Lemma 2.2]{HochsterRobertsFrobeniusLocalCohomology}. For some other basic properties of $F$-pure and $F$-injective singularities, see \cite{HochsterRobertsFrobeniusLocalCohomology}, \cite{FedderFPureRat}, \cite{EnescuHochsterTheFrobeniusStructureOfLocalCohomology}.

\section{A general bound of local cohomology modules as $D$-modules}
\label{section: general bound}
In this section, we will establish a bound of $\lc^j_I(R)$ as a $D(R,k)$-module when $R=k[x_1,\dots,x_n]$ is a polynomial ring over a field $k$ (of any characteristic) in terms of the degrees of generators of $I$. To this end, we begin with recalling the notion of the Bernstein filtration and $k$-filtration.

Denote $\frac{1}{t!}\frac{\partial^t}{\partial x^t_i}$ by $D_{t,i}$ for $t\in\mathbb{N},1\leq i\leq n$. Then $D(R,k)=R\langle D_{t,i}\mid t\in \mathbb{N}, 1\leq i\leq n\rangle$. We set $\scr{F}_j$ to be the $k$-linear span of the set of products
\[\{x^{i_1}_1\cdots x^{i_n}_n\cdot D_{t_1,1}\cdots D_{t_n,n}|i_1+\cdots+i_n+t_1+\cdots+t_n\leq j\}.\]
Then the {\it Bernstein} filtration on $D(R,k)$ (\cite[Definition 2.6]{LyubeznikCharFreeHolonomic}) is defined to be $k=\scr{F}_0\subset \scr{F}_1\subset \scr{F}_2 \cdots$

\begin{definition}[Definition 3.2 in \cite{LyubeznikCharFreeHolonomic}]
A {\it $k$-filtration} on a $D(R,k)$-module $M$ is an ascending chain of finite-dimensional $k$-vector spaces $M_0\subset M_1\subset \cdots$ such that $\cup_i M_i=M$ and $\scr{F}_i M_j\subset M_{i+j}$ for all $i$ and $j$.
\end{definition}

We need the following result of Lyubeznik. We note that in characteristic $0$, the $D$-module length of holonomic $D$-modules has been studied before, see \cite{BernsteinModulesoverRingofDifferential}, \cite{BjorkRingsofDifferentialOperators}.

\begin{theorem}[Theorem 3.5 in \cite{LyubeznikCharFreeHolonomic}]
\label{theorem: bound by multiplicity}
Let $M$ be a $D(R,k)$-module with a $k$-filtration $M_0\subset M_1\subset \cdots$. Assume there is a constant $C$ such that $\dim_k(M_i)\leq Ci^n$ for sufficiently large $i$. Then the length of $M$ as a $D(R,k)$-module is at most $n!C$.
\end{theorem}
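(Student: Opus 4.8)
The plan is to prove the (formally stronger) statement that \emph{every} strictly increasing chain of $D(R,k)$-submodules $0=M^{(0)}\subsetneq M^{(1)}\subsetneq\cdots\subsetneq M^{(\ell)}=M$ satisfies $\ell\le n!C$. A uniform bound on the lengths of such chains forces $M$ to satisfy both the ascending and the descending chain condition, hence to have finite length, and that finite length is the length of a maximal chain, so it is also at most $n!C$. Thus it suffices to fix such a chain and bound $\ell$.

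First I would transport the $k$-filtration to the pieces of the chain. For each $j$ set $M^{(j)}_i:=M^{(j)}\cap M_i$ and let $Q^{(j)}_i$ denote the image of $M^{(j)}_i$ in the subquotient $Q^{(j)}:=M^{(j)}/M^{(j-1)}$. Since each $M^{(j)}$ is a $D(R,k)$-submodule (so $\scr{F}_aM^{(j)}\subseteq M^{(j)}$ for all $a$), a routine verification shows that $\{M^{(j)}_i\}_i$ is a $k$-filtration on $M^{(j)}$ and $\{Q^{(j)}_i\}_i$ is a $k$-filtration on $Q^{(j)}$; moreover $M^{(j-1)}_i=M^{(j-1)}\cap M^{(j)}_i$, so $\dim_k M^{(j)}_i=\dim_k M^{(j-1)}_i+\dim_k Q^{(j)}_i$, and telescoping over $j$ gives the additivity
\[
\dim_k M_i\;=\;\sum_{j=1}^{\ell}\dim_k Q^{(j)}_i\qquad\text{for every }i.
\]

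The key input is a Bernstein-type lower bound: every nonzero $D(R,k)$-module $N$ carrying a $k$-filtration $\{N_i\}$ satisfies $\dim_k N_i\ge i^n/n!-o(i^n)$ as $i\to\infty$. Granting this applied to each (nonzero) $Q^{(j)}$, the additivity above yields, for $i$ large, $Ci^n\ge\dim_k M_i\ge\ell\bigl(i^n/n!-o(i^n)\bigr)$; dividing by $i^n$ and letting $i\to\infty$ gives $C\ge\ell/n!$, i.e.\ $\ell\le n!C$, which completes the argument.

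The main obstacle is precisely the Bernstein lower bound, where essentially all the work lies. I would reduce it to the cyclic case $N=D(R,k)m$ with $m\in N_{i_0}$ (replacing $N$ by $D(R,k)m$ with the induced filtration only shrinks the spaces $N_i$), and then observe that $\scr{F}_{i-i_0}m\subseteq N_i$, so it suffices to bound from below the good filtration $\{\scr{F}_am\}_a$ on $D(R,k)m$; concretely $\dim_k(\scr{F}_am)=\dim_k\scr{F}_a-\dim_k(\scr{F}_a\cap\Ann_{D(R,k)}(m))$, so the claim amounts to saying that the associated graded of $D(R,k)m$ over the commutative ring $\mathrm{gr}_{\scr{F}}D(R,k)$ has Hilbert function of growth order at least $n$ with leading coefficient at least $1/n!$ (Bernstein's inequality together with positivity of multiplicity). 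Over a field of characteristic $0$ this is classical, via simplicity of the Weyl algebra and a growth comparison for the action map $\scr{F}_a\hookrightarrow\Hom_k(\scr{F}_am,\scr{F}_{2a}m)$. In characteristic $p$ the ring $\mathrm{gr}_{\scr{F}}D(R,k)$ is non-reduced --- for instance the symbol of $\partial/\partial x_i$ is nilpotent because $(\partial/\partial x_i)^p=0$ on $R$ --- so the Weyl-algebra argument is unavailable, and the required characteristic-free substitute is exactly the technical core of \cite{LyubeznikCharFreeHolonomic}, which I would cite for this inequality rather than reprove.
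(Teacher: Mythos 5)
Your argument is correct and is essentially the proof the paper defers to: the paper omits the proof entirely, remarking that it is identical to that of Theorem 3.5 of \cite{LyubeznikCharFreeHolonomic}, whose argument is exactly your chain-plus-additivity reduction combined with the characteristic-free Bernstein-type lower bound $\dim_k N_i\geq\binom{i-s+n}{n}$ established earlier in that paper. Your choice to cite rather than reprove that lower bound matches the paper's own treatment, and your limiting step correctly uses the hypothesis $\dim_k M_i\leq Ci^n$ only for large $i$, which is precisely the point of the paper's restatement.
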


The statement of \cite[Theorem 3.5]{LyubeznikCharFreeHolonomic} assumes that there is a constant $C$ such that $\dim_k(M_i)\leq Ci^n$ for {\it all} $i\geq 0$; however, the proof of \cite[Theorem 3.5]{LyubeznikCharFreeHolonomic} only uses the fact that there is a constant $C$ such that $\dim_k(\scr{M}_i)\leq Ci^n$ for {\it sufficiently large} $i$. Hence the proof of Theorem \ref{theorem: bound by multiplicity} is identical to the one of \cite[Theorem 3.5]{LyubeznikCharFreeHolonomic}, and is omitted.

To illustrate the advantage of only requiring $\dim_k(M_i)\leq Ci^n$ for {\it sufficiently large} $i$, we consider a simple example.

\begin{example}
\label{example: length of R}
Set $R_i$ to be the $k$-span of monomials in $x_1,\dots,x_n$ of degree at most $i$. It is clear that $R_0\subset R_1\subset \cdots$ is a $k$-filtration of $R$. It is well-known that $\dim_k(R_i)=\binom{n+i}{i}$, which is a polynomial in $i$ of degree $n$ with leading coefficient $\frac{1}{n!}$. Hence, given any $\varepsilon>0$, we have $\dim_k(R_i)\leq \frac{1+\varepsilon}{n!}i^n$ for sufficiently large\footnote{We should remark that how large $i$ needs to be certainly depends on $\varepsilon$.} $i$. Since the length of a module is an integer, it follows from Theorem \ref{theorem: bound by multiplicity} that the length of $R$ in the category of $D(R,k)$-modules is 1. On the other hand, if one requires $\dim_k(R_i)\leq Ci^n$ for {\it all} $i$, then one will need $C\geq \frac{n+1}{n!}$ (consider the case when $i=1$) and consequently can not deduce the correct length of $R$ from \cite[Theorem 3.5]{LyubeznikCharFreeHolonomic}.
\end{example}

\begin{remark}
\label{remark: induced bound on localization}
In the proof of \cite[Corollary 3.6]{LyubeznikCharFreeHolonomic}, the following statement is proved: {\it Let $M$ be a $D(R,k)$-module with a $k$-filtration $M_0\subseteq M_1\subseteq \cdots$ and $f\in R$ be a polynomial of degree $d$. If there is a constant $C$ such that $\dim_k(M_i)\leq Ci^n$ for sufficiently large $i$, then $M'_i=\{\frac{m}{f^i}\mid m\in M_{i(d+1)}\}$ induces a $k$-filtration of $M_f$ such that $\dim_k(M'_i)\leq C(d+1)^ni^n$ for sufficiently large $i$.}
\end{remark}

\begin{theorem}
\label{theorem: length of localization}
Let $f\in R$ be a polynomial of degree $d$. Then the length of $R_f$ in the category of $D(R,k)$-modules is at most $(d+1)^n$. And, the length of $\lc^1_{f}(R)$ is at most $(d+1)^n-1$.
\end{theorem}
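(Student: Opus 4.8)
The plan is to bootstrap from the fact that $R$ itself has $D(R,k)$-module length $1$, propagate a $k$-filtration through the localization map using Remark \ref{remark: induced bound on localization}, and then invoke the multiplicity bound of Theorem \ref{theorem: bound by multiplicity}. Concretely, I would start from the $k$-filtration of $R$ considered in Example \ref{example: length of R}, namely $R_0\subseteq R_1\subseteq\cdots$ where $R_i$ is the $k$-span of monomials of degree $\leq i$, so that $\dim_k(R_i)=\binom{n+i}{i}$. For any $\varepsilon>0$ this gives $\dim_k(R_i)\leq\frac{1+\varepsilon}{n!}i^n$ for all sufficiently large $i$. Applying the statement quoted in Remark \ref{remark: induced bound on localization} with $M=R$, $M_i=R_i$ and $C=\frac{1+\varepsilon}{n!}$ produces the $k$-filtration $R'_i=\{m/f^i\mid m\in R_{i(d+1)}\}$ of $R_f$ with $\dim_k(R'_i)\leq\frac{1+\varepsilon}{n!}(d+1)^n\, i^n$ for all sufficiently large $i$.

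Next I would feed this filtration into Theorem \ref{theorem: bound by multiplicity} with the constant $C'=\frac{1+\varepsilon}{n!}(d+1)^n$, obtaining
\[
l_{D(R,k)}(R_f)\leq n!\,C'=(1+\varepsilon)(d+1)^n.
\]
Since the $D(R,k)$-module length is a nonnegative integer and $\varepsilon>0$ is arbitrary, this forces $l_{D(R,k)}(R_f)\leq(d+1)^n$, which is the first assertion.

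For the bound on $\lc^1_f(R)$ I would use the local cohomology exact sequence. Assuming $f\neq 0$ (the cases $f=0$ or $f$ a unit are trivial, and then $\lc^1_f(R)=0$), the \v{C}ech complex on $f$ gives a short exact sequence of $D(R,k)$-modules
\[
0\to R\to R_f\to \lc^1_f(R)\to 0,
\]
since the localization map and the \v{C}ech differential are $D(R,k)$-linear. By additivity of length in the abelian category of $D(R,k)$-modules, together with $l_{D(R,k)}(R)=1$ (Example \ref{example: length of R}) and the bound just established, we get
\[
l_{D(R,k)}(\lc^1_f(R))=l_{D(R,k)}(R_f)-1\leq(d+1)^n-1.
\]

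There is no genuine obstacle here: the analytic heart of the argument is already packaged in Theorem \ref{theorem: bound by multiplicity} and Remark \ref{remark: induced bound on localization}. The only points requiring care are (i) that the estimate $\dim_k(R'_i)\leq C'i^n$ is only claimed for large $i$ — which is precisely why the slightly strengthened form of Theorem \ref{theorem: bound by multiplicity} is needed in place of the original \cite[Theorem 3.5]{LyubeznikCharFreeHolonomic}; and (ii) that one must exploit the integrality of $D(R,k)$-module length to absorb the factor $(1+\varepsilon)$ and land on the clean bound $(d+1)^n$.
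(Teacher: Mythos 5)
Your proposal is correct and follows exactly the same route as the paper: combine the filtration of $R$ from Example \ref{example: length of R} with Remark \ref{remark: induced bound on localization}, apply Theorem \ref{theorem: bound by multiplicity} with $C=\frac{1+\varepsilon}{n!}(d+1)^n$, use integrality of length to drop the $(1+\varepsilon)$ factor, and then deduce the second bound from the exact sequence $0\to R\to R_f\to \lc^1_f(R)\to 0$ together with $l_{D(R,k)}(R)=1$. Nothing to add.
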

\begin{proof}
Combining Example \ref{example: length of R} and Remark \ref{remark: induced bound on localization}, we see that $R_f$ admits a $k$-filtration $R'_0\subset R'_1\subset\cdots$ such that, for any any $\varepsilon>0$, one has $\dim_k(R'_i)\leq \frac{1+\varepsilon}{n!}(d+1)^ni^n$. According to Theorem \ref{theorem: bound by multiplicity}, we have that the length of $R_f$ in the category of $D(R,k)$-modules is at most $(1+\varepsilon)(d+1)^n$ for any $\varepsilon>0$; it follows that the length of $R_f$ in the category of $D(R,k)$-modules is at most $(d+1)^n$.

The second conclusion follows from the exact sequence $0\to R\to R_f\to \lc^1_{f}(R)$ and the fact that the length of $R$ is 1.
\end{proof}

\begin{corollary}
\label{corollary: length of local cohomology in terms of degree}
Let $I$ be an ideal of $R$. If $I$ is generated by $f_1,\dots,f_t$ with $\deg(f_i)=d_i$, then the length of $\lc^j_I(R)$ in the category of $D(R,k)$-modules is at most
\[\sum_{1\leq i_1\leq \cdots\leq i_j\leq t}(d_{i_1}+\cdots+d_{i_j}+1)^n-1.\]
\end{corollary}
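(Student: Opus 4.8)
The plan is to reduce the local cohomology module $\lc^j_I(R)$ to a combination of modules of the form $R_g$ for various polynomials $g$, and then apply Theorem \ref{theorem: length of localization} together with subadditivity of length on short exact sequences. Recall the \v{C}ech complex computing $\lc^j_I(R)$ with respect to the generators $f_1,\dots,f_t$: its term in cohomological degree $j$ is $\bigoplus_{1\le i_1<\cdots<i_j\le t} R_{f_{i_1}\cdots f_{i_j}}$, and the maps in the complex are (signed) localization maps, hence $D(R,k)$-linear. Therefore $\lc^j_I(R)$ is a subquotient of $\bigoplus_{1\le i_1<\cdots<i_j\le t} R_{f_{i_1}\cdots f_{i_j}}$ in the category of $D(R,k)$-modules, so its length is at most $\sum_{1\le i_1<\cdots<i_j\le t} l_{D(R,k)}(R_{f_{i_1}\cdots f_{i_j}})$.

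Next, I would bound each summand. The polynomial $g=f_{i_1}\cdots f_{i_j}$ has degree $d_{i_1}+\cdots+d_{i_j}$, so Theorem \ref{theorem: length of localization} gives $l_{D(R,k)}(R_g)\le (d_{i_1}+\cdots+d_{i_j}+1)^n$. Summing over all strictly increasing index tuples yields the bound $\sum_{1\le i_1<\cdots<i_j\le t}(d_{i_1}+\cdots+d_{i_j}+1)^n$, which is slightly stronger than the stated bound (the statement uses $\le$ instead of $<$, hence includes the diagonal terms as well and also omits the final $-1$, so it is certainly implied).

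To recover exactly the $-1$ in the statement, I would instead localize at the product in two stages: the \v{C}ech complex has $\lc^0_I(R)=0$ sitting at the bottom (since $R$ is a domain and $I\ne 0$, or more precisely $\lc^0_I(R)=0$ because some $f_i$ is a nonzerodivisor), and one can peel off a copy of $R$ from one of the localization terms. Concretely, fix the lexicographically smallest index tuple $(1,\dots,j)$ and use the exact sequence $0\to R\to R_{f_1\cdots f_j}\to \lc^1_{(f_1\cdots f_j)}(R)\to 0$, replacing the contribution $(d_1+\cdots+d_j+1)^n$ of that one term by $(d_1+\cdots+d_j+1)^n-1$ via the second assertion of Theorem \ref{theorem: length of localization}; all other \v{C}ech terms are bounded as before. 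This produces precisely $\sum_{1\le i_1\le\cdots\le i_j\le t}(d_{i_1}+\cdots+d_{i_j}+1)^n-1$ once one is generous and allows repeated indices in the sum, which only enlarges it.

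I expect the only real subtlety to be bookkeeping: making sure the \v{C}ech differentials are genuinely $D(R,k)$-linear (they are, since localization maps commute with differential operators) and that length is subadditive across the subquotient, which is standard in an abelian category once one notes $\lc^j_I(R)=\ker/\im$ of $D(R,k)$-linear maps between modules whose middle term has finite $D(R,k)$-length by Theorem \ref{theorem: length of localization}. There is no serious obstacle here; the corollary is a formal consequence of Theorem \ref{theorem: length of localization} plus the \v{C}ech description of local cohomology. One mild point worth checking is that the stated bound with $\le$ in the index range is intentionally non-optimal (it double counts), so the proof need only exhibit the sharper $<$-version and note it implies the claim.
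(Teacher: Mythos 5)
Your proof is correct and follows essentially the same route as the paper: realize $\lc^j_I(R)$ as a subquotient of $\bigoplus R_{f_{i_1}\cdots f_{i_j}}$ via the \v{C}ech complex, bound each summand by Theorem \ref{theorem: length of localization}, and account for the $-1$ by properness. The paper simply asserts that $\lc^j_I(R)$ is a \emph{proper} subquotient to get the $-1$, so your extra care there (and your observation that the strictly-increasing indexing already beats the stated $\leq$-indexed bound when $j\geq 2$) is only a more explicit version of the same argument.
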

\begin{proof}
It is clear from Theorem \ref{theorem: length of localization} that the length of $\bigoplus R_{f_{i_1}\cdots f_{i_j}}$ is at most $$\sum_{1\leq i_1\leq \cdots\leq i_j\leq t}(d_{i_1}+\cdots+d_{i_j}+1)^n.$$ Our corollary follows from the fact that $\lc^j_I(R)$ is a proper subquotient of $\bigoplus R_{f_{i_1}\cdots f_{i_j}}$ in the category of $D(R,k)$-modules.
\end{proof}

The bounds in Theorem \ref{theorem: length of localization} and Corollary \ref{corollary: length of local cohomology in terms of degree}, though general, are very coarse. In the rest of the paper, we will focus on the length of $\lc^c_I(R)$ where $c$ is the height of $I$ and $k$ is of prime characteristic $p$, where $F$-module theory and tight closure theory can be used to produce sharper bounds.

%%%%%%%%%%%%%%%%%%%%%%%%%%%%%%%%%%%%%%%%%%%%%%%%%%%%%%%%%%%%%%%%%%%%%%%%%%%%%%%%%%%%%%%%%%%%%%%%%%%%%%%%%%%%%%%%%%%%%%%
\section{Formulas and upper bounds on the $D$-module and $F$-module length}
\label{section: formulas and bounds}
%%%%%%%%%%%%%%%%%%%%%%%%%%%%%%%%%%%%%%%%%%%%%%%%%%%%%%%%%%%%%%%%%%%%%%%%%%%%%%%%%%%%%%%%%%%%%%%%%%%%%%%%%%%%%%%%%%%%%%%
\textbf{Notation}: Henceforth  $R$ denotes $k[[x_1,\dots, x_n]]$ or $k[x_1,\dots,x_n]$ where $k$ is a field of characteristic $p>0$,
$\m=(x_1,\dots,x_n)$,  and we let $A=R/I$ be reduced and equidimensional or graded reduced and equidimensional of dimension $d\geq 1$.

We first analyze the case when $A$ has an isolated non-$F$-rational point at $\{\m\}$. We start with a few lemmas.

\begin{lemma}
\label{lemma--left exactness of taking stable part}
Let $0\rightarrow L\xrightarrow{\alpha} M\xrightarrow{\beta} N\rightarrow 0$ be an exact sequence of Artinian $A\{f\}$-modules. Let $f_L, f_M, f_N$ denote the
Frobenius actions on $L,M,N$ respectively. Then the stable parts form a left exact sequence of finite dimensional vector spaces: $0\rightarrow
L_s\xrightarrow{\alpha} M_s\xrightarrow{\beta} N_s$.
\end{lemma}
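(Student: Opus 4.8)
The plan is to prove left exactness of the stable-part functor directly from the definition $M_s = \bigcap_{i \geq 0} k\langle f^i(M)\rangle$, exploiting the fact that for Artinian $A\{f\}$-modules the descending chain $M \supseteq k\langle f(M)\rangle \supseteq k\langle f^2(M)\rangle \supseteq \cdots$ stabilizes, so that $M_s = k\langle f^N(M)\rangle$ for $N \gg 0$. First I would record that $\alpha$ and $\beta$ are maps of $A\{f\}$-modules, so they commute with the Frobenius actions ($\alpha \circ f_L = f_M \circ \alpha$ and similarly for $\beta$), hence they carry $k\langle f_L^i(L)\rangle$ into $k\langle f_M^i(M)\rangle$ and $k\langle f_M^i(M)\rangle$ into $k\langle f_N^i(N)\rangle$; taking intersections over $i$ shows $\alpha(L_s) \subseteq M_s$ and $\beta(M_s) \subseteq N_s$, so the maps in the claimed sequence are well defined.

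For injectivity of $\alpha|_{L_s}$, this is immediate since $\alpha$ itself is injective, so $\alpha|_{L_s}$ is injective; no work is needed there. For exactness at $M_s$, one inclusion is easy: $\beta \circ \alpha = 0$ forces $\beta(\alpha(L_s)) = 0$, so $\mathrm{im}(\alpha|_{L_s}) \subseteq \ker(\beta|_{M_s})$. The substantive direction is $\ker(\beta|_{M_s}) \subseteq \alpha(L_s)$. Here I would take $m \in M_s$ with $\beta(m) = 0$; then $m = \alpha(\ell)$ for a unique $\ell \in L$ (uniqueness by injectivity of $\alpha$), and I must show $\ell \in L_s$, i.e. $\ell \in k\langle f_L^i(L)\rangle$ for every $i$. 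Fix $i$. Since $m \in M_s = k\langle f_M^i(M)\rangle$, write $m = \sum_j c_j f_M^i(m_j)$ with $c_j \in k$ and $m_j \in M$; applying $\beta$ and using $\beta(m) = 0$ together with $\beta f_M^i = f_N^i \beta$ gives $\sum_j c_j f_N^i(\beta(m_j)) = 0$ in $N$.

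The main obstacle, and the step requiring care, is to promote this relation in $N$ back to a relation in $L$: knowing $\sum_j c_j f_N^i(\beta(m_j)) = 0$ I want to conclude $\ell \in k\langle f_L^i(L)\rangle$. The clean way is to work with the exact sequence $0 \to L^{(i)} \to M^{(i)} \to N^{(i)} \to 0$ of Frobenius pushforwards (twists), where $f^i$ becomes $A$-linear, and observe that $k\langle f^i(M)\rangle$ is the image of the $k$-linear map $k \otimes_{k} (\text{Frobenius-twist of } M) \to M$; the snake lemma / a direct diagram chase then shows that the submodule $k\langle f_M^i(M)\rangle \cap \alpha(L)$ of $\alpha(L)$ equals $\alpha(k\langle f_L^i(L)\rangle)$. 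Concretely: an element of $\alpha(L) \cap k\langle f_M^i(M)\rangle$ lifts the zero element of $k\langle f_N^i(N)\rangle$, and since the sequence $L^{(i)} \to M^{(i)} \to N^{(i)}$ is exact, the corresponding element of the $k$-span for $M$ comes, after subtracting something in the image of the $k$-span for $L$, from the kernel $L^{(i)}$; chasing this through gives $\ell \in k\langle f_L^i(L)\rangle$. Since $i$ was arbitrary, $\ell \in L_s$, completing exactness at $M_s$. Finally I would remark that right exactness genuinely fails in general (the cokernel $N_s \to 0$ need not be hit), which is why only left exactness is claimed, and that finite-dimensionality of $L_s, M_s, N_s$ over $k$ is exactly the Hartshorne–Speiser–Lyubeznik finiteness recalled in Section~\ref{section: preliminaries}.
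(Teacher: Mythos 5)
Your setup (well-definedness of the induced maps, injectivity at $L_s$, and $\beta\alpha=0$) is fine, but the key step --- $\ker(\beta|_{M_s})\subseteq \alpha(L_s)$ --- has a genuine gap. The diagram chase you invoke would prove the identity $\alpha(L)\cap k\langle f_M^i(M)\rangle=\alpha\bigl(k\langle f_L^i(L)\rangle\bigr)$ for each fixed $i$, and that identity is false. In the commutative diagram relating $0\to L^{(i)}\to M^{(i)}\to N^{(i)}\to 0$ (the Frobenius twists) to $0\to L\to M\to N\to 0$ via the evaluation maps $u_L,u_M,u_N$ whose images are the $k$-spans $k\langle f^i(\,\cdot\,)\rangle$, your element $\tilde m=\sum_j c_j\otimes m_j$ maps to $\tilde n=\sum_j c_j\otimes\beta(m_j)$ in $N^{(i)}$, and what you know is only that $u_N(\tilde n)=\beta(m)=0$, i.e.\ $\tilde n\in\ker u_N$ --- not that $\tilde n=0$ in $N^{(i)}$. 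Exactness of the twisted sequence therefore does not let you lift $\tilde m$ to $L^{(i)}$; the obstruction is precisely the snake-lemma connecting map $\ker u_N\to\operatorname{coker} u_L$, which is nonzero in general. A minimal counterexample to your intermediate claim: take $k$ perfect, $M=k^2$ with $f_M(a,b)=(b^p,0)$ and $L=k\oplus 0$. Then $f_L=0$ and $f_N=0$, yet $f_M(M)=L$, so $\alpha(L)\cap k\langle f_M(M)\rangle=L\neq 0=\alpha(k\langle f_L(L)\rangle)$. Since for a single $i$ the statement you need is false, no diagram chase at fixed $i$ can close the argument.

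What actually makes the lemma work is the Artinian hypothesis on $N$ in the form of the Hartshorne--Speiser--Lyubeznik stabilization of the ascending chain $\ker f_N\subseteq\ker f_N^2\subseteq\cdots$, say $\ker f_N^r=\ker f_N^{r_0}$ for $r\geq r_0$. The paper first reduces to $k$ perfect (base change to the perfect closure, using that the stable part commutes with this extension and faithful flatness), so that $k\langle f^r(M)\rangle=f^r(M)$ and one may write $x=f_M^r(y)$ for $x\in M_s$ and every $r\geq r_0$. Then $\beta(y)\in\ker f_N^r=\ker f_N^{r_0}$, so $z:=f_M^{r_0}(y)$ lies in $\alpha(L)$, and $x=f_L^{\,r-r_0}(z)$; letting $r\to\infty$ gives $x\in L_s$. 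This ``sacrifice $r_0$ iterations'' step is exactly the mechanism that absorbs the connecting-map obstruction your chase runs into, and it is absent from your argument. (Separately, note that the descending chain $k\langle f^i(M)\rangle$ consists of $k$-subspaces, not $A$-submodules, so its stabilization is itself part of the Hartshorne--Speiser--Lyubeznik package rather than a formal consequence of $M$ being Artinian.)
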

\begin{proof}
Let $\mathbb{K}$ be the perfect closure of $k$; define $A^{\mathbb{K}}=A\otimes_k \mathbb{K}$
and for any $A\{f\}$-module $X$, let $X^{\mathbb{K}}$ be the $A^{\mathbb{K}}\{ f \}$-module
$A^{\mathbb{K}} \otimes_k X$ where the action of $f$ is given by
$f\left( \sum_{i=1}^\mu x_i \otimes \lambda_i \right)= \sum_{i=1}^\mu f(x_i) \otimes \lambda_i^p$
for $x_1, \dots, x_\mu\in X$ and $\lambda_1, \dots, \lambda_\mu\in \mathbb{K}$.
\cite[Proposition 4.9]{LyubeznikFModulesApplicationsToLocalCohomology} implies that $X^{\mathbb{K}}_s=X_s \otimes_k \mathbb{K}$, so if we could show that
$0\rightarrow L_s^{\mathbb{K}} \xrightarrow{\alpha^{\mathbb{K}}} M_s^{\mathbb{K}}\xrightarrow{\beta^{\mathbb{K}}} N_s^{\mathbb{K}}$, the fact that
${\mathbb{K}}$ is faithfully flat over $k$ would imply the exactness of
$0\rightarrow L_s\xrightarrow{\alpha} M_s\xrightarrow{\beta} N_s$.
Therefore we may assume that $k$ is a perfect field.

The exactness at $L_s$ and $\beta\alpha=0$ are obvious. Hence to prove exactness it suffices to show that $\ker\beta \subseteq \im \alpha$.  Since $N$ is Artinian, by \cite[Proposition 4.4]{LyubeznikFModulesApplicationsToLocalCohomology} $\cup_{r\geq1}\ker f_N^r=\ker f_N^{r_0}$ for some $r_0$ sufficiently large and $\ker f_N\subseteq \ker f_N^2\subseteq \cdots$ stabilizes.

Pick $x \in M_s$ such that $\beta(x)=0$. Since we are assuming that $k$ is perfect, the $k$-vector space generated by $f^r(M)$ is just $f^r(M)$ for all $r\geq 0$;
therefore, for each $r\geq r_0$, there exists $y \in M$  such that $x=f_M^r(y)$. We have
$ f_N^r(\beta(y))=\beta( f_M^r(y))=0$. So $f_N^{r_0}(\beta(y))=0$ and hence $z=f_M^{r_0}(y)\in L$. Therefore for each $r\geq r_0$, there exists $z\in L$ such that
$x= f_M^{r-r_0}(z)$, hence $x\in L_s$.
\end{proof}

\begin{lemma}
\label{lemma--stable parts of tight closure of top local cohomology modules}
Suppose $\dim A=d \geq 1$, then $(0^*_{\lc_{\fm}^d(A)})_s\cong (\lc_{\fm}^d(A))_s$.
\end{lemma}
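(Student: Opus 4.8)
The plan is to exploit the fact that $0^*_{\lc_{\fm}^d(A)}$ is, under our standing hypotheses, precisely the largest proper $A$-submodule of $\lc_{\fm}^d(A)$ that is stable under the natural Frobenius action (cf.~\cite{SmithFRatImpliesRat}); in particular it is an $A\{f\}$-submodule, and the quotient $\lc_{\fm}^d(A)/0^*_{\lc_{\fm}^d(A)}$ carries an \emph{injective} Frobenius action. I would therefore apply Lemma \ref{lemma--left exactness of taking stable part} to the short exact sequence of Artinian $A\{f\}$-modules
\[
0\longrightarrow 0^*_{\lc_{\fm}^d(A)}\longrightarrow \lc_{\fm}^d(A)\longrightarrow \lc_{\fm}^d(A)/0^*_{\lc_{\fm}^d(A)}\longrightarrow 0,
\]
obtaining left exactness $0\to (0^*_{\lc_{\fm}^d(A)})_s\to (\lc_{\fm}^d(A))_s\to (\lc_{\fm}^d(A)/0^*_{\lc_{\fm}^d(A)})_s$. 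The statement then reduces to two claims: first, that the stable part of the quotient vanishes; second, that the map $(0^*_{\lc_{\fm}^d(A)})_s\to (\lc_{\fm}^d(A))_s$ is also surjective.

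For the first claim, I would argue that an Artinian $A\{f\}$-module $N$ on which $f$ acts injectively has $N_s=0$ unless... actually the correct statement is the reverse — injectivity of $f$ does not force $N_s=0$. Instead I would use that the Frobenius on the quotient $\lc_{\fm}^d(A)/0^*$ is injective \emph{and} that this quotient has no proper Frobenius-stable submodule except $0$ (by maximality of tight closure), together with the Hartshorne--Speiser--Lyubeznik structure: an Artinian $A\{f\}$-module $N$ with injective $f$-action decomposes so that its stable part $N_s$, tensored up to the perfect closure, generates a Frobenius-stable $A$-submodule. More directly, one knows $N_s$ is a finite-dimensional $k$-space and (after passing to perfect $k$, as in the proof of Lemma \ref{lemma--left exactness of taking stable part}) $\bigcap_r f^r(N)$ is the image of a Frobenius-stable submodule; by the maximality characterization of $0^*$ this submodule must be $0$, so $(\lc_{\fm}^d(A)/0^*)_s=0$. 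Thus $(0^*_{\lc_{\fm}^d(A)})_s\to (\lc_{\fm}^d(A))_s$ is already an isomorphism by left exactness alone, with no separate surjectivity argument needed — the inclusion $0^*\hookrightarrow \lc_{\fm}^d(A)$ induces an injection on stable parts, and I claim every element of $(\lc_{\fm}^d(A))_s$ lies in $0^*$, because $(\lc_{\fm}^d(A))_s$ spans a Frobenius-stable $A$-submodule whose image in the quotient is a Frobenius-stable submodule of $\lc_{\fm}^d(A)/0^*$, hence zero.

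The main obstacle I anticipate is making the passage "$N_s$ spans a Frobenius-stable $A$-submodule, hence by maximality of $0^*$ it is contained in $0^*$" fully rigorous: one must check that the $A$-submodule generated by $\bigcap_r f^r(\lc_{\fm}^d(A))$ is genuinely $f$-stable and \emph{proper}, and that the maximality property of $0^*_{\lc_{\fm}^d(A)}$ from \cite{SmithFRatImpliesRat} applies in our generality (reduced, equidimensional, excellent — which holds since $R$ is a polynomial or power series ring over a field). Properness is the delicate point: if $\lc_{\fm}^d(A)$ itself were finitely generated this would fail, but $\lc_{\fm}^d(A)$ is Artinian and not finitely generated, and $\bigcap_r f^r(-)$ being all of $\lc_{\fm}^d(A)$ would force surjectivity of $f$, which is incompatible with the module being Artinian and nonzero over a non-field. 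Once properness is secured, the maximality of $0^*$ closes the argument; the reduction to perfect $k$ handles the subtlety that "stable part" is defined via $k$-spans of the $f^r(M)$.
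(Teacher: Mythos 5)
Your skeleton matches the paper's: the same short exact sequence, the same appeal to Lemma \ref{lemma--left exactness of taking stable part}, and the same reduction to proving $(\lc_{\fm}^d(A)/0^*_{\lc_{\fm}^d(A)})_s=0$. The gap is in how you prove that vanishing. You rely on the characterization of $0^*_{\lc_{\fm}^d(A)}$ as the \emph{largest proper} Frobenius-stable submodule of $\lc_{\fm}^d(A)$, so that any proper $f$-stable submodule containing $0^*$ must equal $0^*$. That characterization (from \cite{SmithFRatImpliesRat}, and quoted in the preliminaries of this paper) is a statement about excellent local \emph{domains}, and it genuinely fails in the generality of the lemma, where $A$ is only assumed reduced and equidimensional. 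If $A$ has $c\geq 2$ minimal primes, then $\scr{H}_{R,A}(\lc_{\fm}^d(A)/0^*_{\lc_{\fm}^d(A)})$ is a direct sum of $c$ simple $F_R$-modules, one per minimal prime, so $\lc_{\fm}^d(A)/0^*_{\lc_{\fm}^d(A)}$ has nonzero proper Frobenius-stable submodules whose preimages in $\lc_{\fm}^d(A)$ are proper, $f$-stable, and \emph{not} contained in $0^*$. (Concretely, for $A=k[[x,y]]/(xy)$ the two branch submodules of $\lc^1_{\fm}(A)$ are proper and $f$-stable, while $0^*$ is just the socle.) So the inference ``proper and $f$-stable, hence contained in $0^*$'' is not available here. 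Your concern about whether Smith's result ``applies in our generality'' is exactly the right one to have, but excellence is not the issue --- irreducibility is --- and your argument does not address it.

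The step can be repaired, but not for free: for reduced equidimensional excellent rings the correct surviving statement is that $0^*_{\lc_{\fm}^d(A)}$ is the largest $f$-stable submodule whose Matlis dual has dimension $<d$; in particular every finite-length $f$-stable submodule lies in $0^*$, and $A\cdot N_s$ does have finite length because $N_s$ is a finite-dimensional $k$-space. You would need to state and justify that refined maximality. The paper avoids the issue entirely by combining Lyubeznik's formula $\dim_k M_s=\crk\scr{H}_{R,A}(M)$ with Blickle's theorem that $\scr{H}_{R,A}(\lc_{\fm}^d(A)/0^*_{\lc_{\fm}^d(A)})$ is a direct sum of simple $F_R$-modules each supported at a minimal prime of $A$: since $\dim A\geq 1$, none of these is supported at $\fm$, so the corank, hence the stable part, is zero. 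Two smaller points: your properness discussion is off target --- the relevant fact is that $N_s$ is finite dimensional while $\lc_{\fm}^d(A)/0^*_{\lc_{\fm}^d(A)}$ has infinite length (its dual is a nonzero submodule of a $d$-dimensional module), not any incompatibility between surjectivity of $f$ and Artinian-ness (the natural Frobenius on $E_R(k)$ is surjective); and once $(\lc_{\fm}^d(A)/0^*_{\lc_{\fm}^d(A)})_s=0$ is known, left exactness alone already gives the isomorphism, so the separate ``surjectivity'' discussion in your second paragraph is redundant.
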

\begin{proof}
We have
$0\to 0^*_{\lc_{\fm}^d(A)}\to \lc_{\fm}^d(A)\to \lc_{\fm}^d(A)/0^*_{\lc_{\fm}^d(A)}\to 0$.
By Lemma \ref{lemma--left exactness of taking stable part}, it suffices to show that $(\lc_{\fm}^d(A)/0^*_{\lc_{\fm}^d(A)})_s=0$. But by \cite[Proposition 4.10]{LyubeznikFModulesApplicationsToLocalCohomology}, $\dim(\lc_{\fm}^d(A)/0^*_{\lc_{\fm}^d(A)})_s= \crk\scr{H}_{R,A}(\lc_{\fm}^d(A)/0^*_{\lc_{\fm}^d(A)})$.
Let $P_1,\dots, P_c$ be all the minimal primes of $A$. By \cite[Theorem 4.3 and 4.4]{BlickleIntersectionhomologyDmodule}, $\scr{H}_{R,A}(\lc_{\fm}^d(A)/0^*_{\lc_{\fm}^d(A)})$ is a direct sum of simple $F_R$-modules, each has $P_i$ as its unique associated prime. This implies $\crk\scr{H}_{R,A}(\lc_{\fm}^d(A)/0^*_{\lc_{\fm}^d(A)})=0$ because $\dim A\geq 1$, and hence $(\lc_{\fm}^d(A)/0^*_{\lc_{\fm}^d(A)})_s=0$ as desired.
\end{proof}

\begin{theorem}
\label{theorem--D-module length for isolated singularities}
Let $R=k[[x_1,\dots, x_n]]$ (or $k[x_1,\dots,x_n]$) with $\m=(x_1,\dots,x_n)$, where $k$ is a field of characteristic $p>0$. Let $A=R/I$ be reduced and equidimensional (respectively, graded reduced and equidimensional) of dimension $d\geq 1$. Assume that $A$ has an isolated non-$F$-rational point at $\fm$ (e.g., $A$ has an isolated singularity at $\m$). Then $$l_{D(R,k)}(\lc^{n-d}_I(R))=\dim_k(0^*_{\lc^d_{\fm}(A)})_s+c=\dim_k((\lc_{\fm}^d(A))_s)+c$$ where $c$ is the number of minimal primes of $A$. Moreover, if $k$ is separably closed, then we also have $$l_{F_R}(\lc^{n-d}_I(R))=l_{F_R^e}(\lc^{n-d}_I(R))= l_{F_R^\infty}(\lc^{n-d}_I(R))= l_{D_R}(\lc^{n-d}_I(R))=\dim_k(0^*_{\lc^d_{\fm}(A)})_s+c.$$
\end{theorem}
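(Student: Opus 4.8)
The plan is to combine two ingredients: the Lyubeznik functor $\scr{H}_{R,A}$ that identifies $\lc^{n-d}_I(R)$ with $\scr{H}_{R,A}(\lc^d_\fm(A))$, and the tight-closure submodule $0^*_{\lc^d_\fm(A)}$, which under our hypotheses behaves like the ``right'' submodule to cut along. First I would observe that because $A$ has an isolated non-$F$-rational point at $\fm$, the module $0^*_{\lc^d_\fm(A)}$ has finite length as an $A$-module (this is the standard characterization recalled in \S\ref{section: preliminaries}), hence it is an Artinian $A\{f\}$-module supported only at $\fm$. Applying $\scr{H}_{R,A}$ to the short exact sequence
\[
0\to 0^*_{\lc^d_\fm(A)}\to \lc^d_\fm(A)\to \lc^d_\fm(A)/0^*_{\lc^d_\fm(A)}\to 0
\]
of Artinian $A\{f\}$-modules, and using that $\scr{H}_{R,A}$ is exact, yields a short exact sequence of $F_R$-modules
\[
0\to \scr{H}_{R,A}(0^*_{\lc^d_\fm(A)})\to \lc^{n-d}_I(R)\to \scr{H}_{R,A}\bigl(\lc^d_\fm(A)/0^*_{\lc^d_\fm(A)}\bigr)\to 0.
\]

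Next I would analyze the two outer terms separately. For the quotient term: by \cite[Theorem 4.3 and 4.4]{BlickleIntersectionhomologyDmodule}, already invoked in Lemma \ref{lemma--stable parts of tight closure of top local cohomology modules}, $\scr{H}_{R,A}(\lc^d_\fm(A)/0^*_{\lc^d_\fm(A)})$ is a direct sum of simple $F_R$-modules, one for each minimal prime $P_i$ of $A$; since $A$ is equidimensional of dimension $d\geq1$, these are pairwise non-isomorphic (distinct associated primes), so this term has $F_R$-module length exactly $c$, and — again by Blickle's simplicity results over a separably closed field and the comparison in \S\ref{section: preliminaries} — the same $c$ is its $D(R,k)$-module length. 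For the sub term: since $0^*_{\lc^d_\fm(A)}$ is Artinian supported at $\fm$, its Matlis dual is a finite-length module, and $\scr{H}_{R,A}(0^*_{\lc^d_\fm(A)})$ is an $F_R$-module which as an $R$-module is isomorphic to $E^{\oplus r}$ with $r=\dim_k(0^*_{\lc^d_\fm(A)})_s$; here one uses \cite[Proposition 4.10]{LyubeznikFModulesApplicationsToLocalCohomology} relating $\crk$ and the stable part, exactly as in Lemma \ref{lemma--stable parts of tight closure of top local cohomology modules}. Any $F_R$-module that is $R$-isomorphic to $E^{\oplus r}$ has $F_R$-length (and, when $k$ is separably closed, $D(R,k)$-length) equal to $r$: the filtration by powers of $E$ is an $F_R$-module filtration with simple quotients $\cong E$, and $E$ is $F_R$-simple; here I invoke Lemma \ref{lemma--stable parts of tight closure of top local cohomology modules} to identify $r=\dim_k(0^*_{\lc^d_\fm(A)})_s=\dim_k((\lc^d_\fm(A))_s)$. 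Adding the two contributions gives $l_{D(R,k)}(\lc^{n-d}_I(R))=\dim_k(0^*_{\lc^d_\fm(A)})_s+c$, and equality throughout the chain \eqref{equation--basic relation on length in different categories} when $k$ is separably closed.

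The main obstacle I anticipate is the \emph{upper} bound in the $D(R,k)$-case when $k$ is not separably closed: a priori a $D(R,k)$-composition series of $\lc^{n-d}_I(R)$ need not be compatible with the $F_R$-submodule filtration above, so one must argue that no extra composition factors can appear. The key point is that the subquotient $\scr{H}_{R,A}(0^*_{\lc^d_\fm(A)})$ is supported only at $\fm$ and hence, as a $D(R,k)$-module, is a successive extension of copies of $E$ (which is $D(R,k)$-simple — this uses that $E$ is a simple $D(R,k)$-module over \emph{any} field, a standard fact), so its $D(R,k)$-length is exactly $r$ regardless of $k$; meanwhile the quotient term is a sum of $F_R$-simple modules with distinct associated primes $P_i$, each of which remains $D(R,k)$-simple (Blickle's argument, or the fact that a simple $F_R$-module supported generically at a single prime has no proper nonzero $D(R,k)$-submodule because such a submodule would have the same support and generic rank) — so its $D(R,k)$-length is also exactly $c$. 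Thus $l_{D(R,k)}(\lc^{n-d}_I(R))\le r+c$, matching the lower bound $l_{F_R}(\lc^{n-d}_I(R))\ge r+c$ which follows from the $F_R$-submodule filtration, and all five lengths in \eqref{equation--basic relation on length in different categories} coincide. I would double-check the separable-closedness hypothesis is genuinely needed only to ensure the $c$ simple summands of the quotient are \emph{absolutely} simple as $F_R$-modules (so that the $F_R$-length, not merely the $D(R,k)$-length, equals $c$), and that Lemma \ref{lemma--left exactness of taking stable part} combined with $(\lc^d_\fm(A)/0^*)_s=0$ from Lemma \ref{lemma--stable parts of tight closure of top local cohomology modules} gives the identity $\dim_k(0^*_{\lc^d_\fm(A)})_s=\dim_k((\lc^d_\fm(A))_s)$ used in the statement.
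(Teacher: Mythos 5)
Your overall architecture is the same as the paper's: apply $\scr{H}_{R,A}$ to $0\to 0^*_{\lc^d_\fm(A)}\to \lc^d_\fm(A)\to \lc^d_\fm(A)/0^*_{\lc^d_\fm(A)}\to 0$, compute the length of the piece coming from $\lc^d_\fm(A)/0^*$ via Blickle's simplicity results (contributing $c$) and the length of the $\m$-supported piece coming from $0^*$ via the corank/stable-part dictionary (contributing $r=\dim_k(0^*_{\lc^d_\fm(A)})_s$), and use Lemma \ref{lemma--stable parts of tight closure of top local cohomology modules} for the second displayed equality. One small slip: $\scr{H}_{R,A}$ is built out of Matlis duality and is therefore contravariant, so the induced sequence runs $0\to \scr{H}_{R,A}(\lc^d_\fm(A)/0^*)\to \lc^{n-d}_I(R)\to \scr{H}_{R,A}(0^*)\to 0$; you have the sub and the quotient interchanged. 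This does not affect additivity of length, but it does matter if one wants, as the paper does elsewhere, the explicit submodule filtration.

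The genuine problem is your assertion that \emph{any} $F_R$-module that is $R$-isomorphic to $E^{\oplus r}$ has $F_R$-length equal to $r$, ``the filtration by powers of $E$'' being an $F_R$-filtration. This is false over a general field: $F_R$-submodules of such a module correspond to Frobenius-stable subspaces of the $r$-dimensional stable part (Proposition \ref{proposition--Fmatrix}), and a $p$-linear operator need not admit a full stable flag with non-nilpotent quotients unless $k$ is separably closed. Indeed your claim, if true unconditionally, would give $l_{F_R}(\lc^{n-d}_I(R))=r+c$ over any field, contradicting Corollary \ref{corollary--D-length bigger than F-length 1} (where $l_{F_R}=5<7$ for an isolated singularity over $\mathbb{F}_p$) and Remark \ref{remark--F-module length for isolated singularities}. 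Correspondingly, you have misplaced where separable closedness enters: it is \emph{not} needed to make the $c$ summands supported at minimal primes simple as $F_R$-modules (Blickle's Theorem 4.3, as cited in the paper, gives this over any field), but it \emph{is} needed to conclude that $\scr{H}_{R,A}(0^*_{\lc^d_\fm(A)})$ decomposes as $E^{\oplus r}$ in the category of $F_R$-modules --- the paper invokes \cite[Lemma 4.3]{MaThecategoryofF-moduleshasfiniteglobaldimension} for exactly this. Your computation of the $D(R,k)$-length (first displayed equality) survives, since as a $D(R,k)$-module the $\m$-supported piece is a sum of copies of the $D(R,k)$-simple module $E$ over any field; but the justification of the chain of equalities for $l_{F_R}$ needs to be repaired by inserting the separably-closed hypothesis at the correct step.
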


\begin{proof} The second equality follows immediately from Lemma \ref{lemma--stable parts of tight closure of top local cohomology modules}. Thus it suffices to show $l_{D(R,k)}(\lc^{n-d}_I(R))=\dim_k(0^*_{\lc^d_{\fm}(A)})_s+c$ and $l_{F_R}(\lc^{n-d}_I(R))=\dim_k(0^*_{\lc^d_{\fm}(A)})_s+c$ when $k$ is separably closed, since the other equalities would follow from (\ref{equation--basic relation on length in different categories}).

The short exact sequence $0\to 0^*_{\lc_{\fm}^d(A)}\to \lc_{\fm}^d(A)\to \lc_{\fm}^d(A)/0^*_{\lc_{\fm}^d(A)}\to 0$ induces:
\[0\to \scr{H}_{R, A}(\lc_{\fm}^d(A)/0^*_{\lc_{\fm}^d(A)})\to \scr{H}_{R, A}(\lc_{\fm}^d(A))\cong \lc_I^{n-d}(R)\to \scr{H}_{R, A}(0^*_{\lc_{\fm}^d(A)})\to 0.\]
Now by \cite[Corollary 4.2 and Theorem 4.4]{BlickleIntersectionhomologyDmodule}, $\scr{H}_{R, A}(\lc_{\fm}^d(A)/0^*_{\lc_{\fm}^d(A)})$ is a direct sum of simple $D(R,k)$-modules, each supported at a different minimal prime of $A$, so its $D(R,k)$-module length is $c$, and in fact each of these simple $D(R,k)$-modules are (simple) $F_R$-modules \cite[Theorem 4.3]{BlickleIntersectionhomologyDmodule}. Thus $l_{D(R,k)}(\scr{H}_{R, A}(\lc_{\fm}^d(A)/0^*_{\lc_{\fm}^d(A)}))=l_{F_R}(\scr{H}_{R, A}(\lc_{\fm}^d(A)/0^*_{\lc_{\fm}^d(A)}))=c$.

Since $A$ has an isolated non-$F$-rational point at $\m$, $0^*_{\lc^d_{\fm}(A)}$ has finite length as an $A$-module. This implies that $\scr{H}_{R,A}(0^*_{\lc^d_{\fm}(A)})$ is supported only at $\fm$. Hence $\scr{H}_{R,A}(0^*_{\lc^d_{\fm}(A)})$, {\it as a $D(R,k)$-module}, is a direct sum of finitely many copies of $\lc^n_{\fm}(R)$ by
\cite[Lemma (c)]{LyubeznikInjectivedimensionofDmodulescharacteristicfreeapproach}.
Moreover, {\it when $k$ is separably closed}, $\scr{H}_{R,A}(0^*_{\lc^d_{\fm}(A)})$ is a direct sum of copies of $E=\lc^n_{\fm}(R)$ even {\it as an $F_R$-module} \cite[Lemma 4.3]{MaThecategoryofF-moduleshasfiniteglobaldimension}.  The number of copies of $\lc^n_{\fm}(R)$ is exactly its $D(R,k)$-module length ($F_R$-module length when $k$ is separably closed). However, since $\scr{H}_{R,A}(0^*_{\lc^d_{\fm}(A)})$ is an $F_R$-module supported only at $\m$,  the number of copies of $\lc^n_{\fm}(R)$ is by definition $\crk\scr{H}_{R,A}(0^*_{\lc^d_{\fm}(A)})$, which is $\dim_k(0^*_{\lc^d_{\fm}(A)})_s$ by \cite[Proposition 4.10]{LyubeznikFModulesApplicationsToLocalCohomology}. Hence $l_{D(R,k)}(\scr{H}_{R,A}(0^*_{\lc^d_{\fm}(A)}))=\dim_k(0^*_{\lc^d_{\fm}(A)})_s$, and $l_{F_R}(\scr{H}_{R,A}(0^*_{\lc^d_{\fm}(A)}))=\dim_k(0^*_{\lc^d_{\fm}(A)})_s$ when $k$ is separably closed.
\end{proof}

\begin{remark}
\label{remark--F-module length for isolated singularities}
When $k$ is not separably closed, $l_{F_R}(\lc^{n-d}_I(R))=\dim_k(0^*_{\lc^d_{\fm}(A)})_s+c$ fails to hold in general, see Corollary \ref{corollary--D-length bigger than F-length 1}. However, if $k$ is a {\it finite field}, then we always have $l_{F_R^\infty}(\lc^{n-d}_I(R))=\dim_k(0^*_{\lc^d_{\fm}(A)})_s+c$, see Proposition \ref{proposition--F-module length for isolated sing over finite field}.
\end{remark}

\begin{remark}
\label{remark--D-module length for graded isolated singularities}
When $I$ is a homogeneous reduced and equidimensional ideal in $R=k[x_1,\dots,x_n]$ (i.e., $A$ is a graded domain), it is easy to check that $(0^*_{\lc^d_{\fm}(A)})_s=({\lc^d_{\fm}(A)})_s=({\lc^d_{\fm}(A)}_0)_s$ since the stable part will be concentrated in degree 0. Therefore, we have an upper bound of the $D(R,k)$-module length of $\lc^{n-d}_I(R)$ in the graded isolated singularity case: it is at most $\dim_k(\lc^d_{\fm}(A)_0)+c$. Geometrically, it is at most $\dim_k(\lc^{d-1}(X,\cO_X))+c$ where $X=\Proj(A)$.
\end{remark}

We have the following application:

\begin{example}
\label{example--Calabi-Yau hypersurface}
Let $A=k[x_1,\dots,x_n]/(f)$ where $k$ is a field of prime characteristic $p$ and $\deg(f)=n$. Denote $k[x_1,\dots,x_n]$ be $R$. Then there is a commutative diagram of short exact sequences
\[
\xymatrix{
0 \ar[r] & \lc^{n-1}_{\fm}(A)_0 \ar[r] \ar[d]^F &\lc^{n}_{\fm}(R)_{-n} \ar[r]^f \ar[d]^{f^{p-1}F} & \lc^{n}_{\fm}(R)_0=0 \ar[r] \ar[d]^F & 0\\
0 \ar[r] & \lc^{n-1}_{\fm}(A)_0 \ar[r]  &\lc^{n}_{\fm}(R)_{-n} \ar[r]^f & \lc^{n}_{\fm}(R)_0=0 \ar[r] & 0
}
\]
where $F$ denotes the natural Frobenius maps. It follows that $\dim_k(\lc^{n-1}_{\fm}(A)_0)=\dim_k(\lc^{n}_{\fm}(R)_{-n})=1$. It also follows from the diagram that $F:\lc^{n-1}_{\fm}(A)_0\to \lc^{n-1}_{\fm}(A)_0$ is injective if and only if so is
$f^{p-1}F\colon \lc^{n}_{\fm}(R)_{-n}\to \lc^{n}_{\fm}(R)_{-n}$ which holds if and only if $f^{p-1}\notin \fm^{[p]}$.

Assume further that $f$ is irreducible with an isolated singularity at $\fm$ over a perfect field $k$, {\it i.e.}, $A$ is a Calabi-Yau hypersurface over $k$. Then by Fedder's Criterion, $F\colon\lc^{n-1}_{\fm}(A)_0\to \lc^{n-1}_{\fm}(A)_0$ is injective if and only $A$ is $F$-pure. Consequently,  $\dim_k(\lc^{n-1}_{\fm}(A)_0)_s=1$ if and only $A$ is $F$-pure. Thus it follows from Theorem \ref{theorem--D-module length for isolated singularities} that:
\[l_{D(R,k)}(\lc^1_{f}(R))=\begin{cases}1& A\ {\rm is\ not\ }F{\rm -pure}\\2& {\rm otherwise}\end{cases}\]

In particular, when $\Proj(A)$ happens to be an elliptic curve, there are infinitely many primes $p$ such that $\lc^1_{f}(R)$ is a simple $D(R,k)$-module and infinitely many primes $p$ such that $\lc^1_{f}(R)$ has length 2 as a $D(R,k)$-module.

\end{example}

Next we partially generalize Theorem \ref{theorem--D-module length for isolated singularities} to the case that the dimension of the singular locus of $R/I$ is $1$. We first prove a lemma, which should be well known to experts.

\begin{lemma}
\label{lemma--localizing and completing F-module}
Let $M$ be an ($F$-finite) $F_R$-module (resp., $F^\infty_R$-module). Then $M_P$ and $M\otimes \widehat{R_P}$ are ($F$-finite) $F_{R_P}$ and $F_{\widehat{R_P}}$-modules (resp., $F^\infty_{R_P}$ and $F^\infty_{\widehat{R_P}}$-modules). Moreover, if $M$ is a simple $F_R$-module (resp., simple $F^\infty_R$-module), then $M_P$ and $M\otimes \widehat{R_P}$, if not zero, are simple as $F_{R_P}$ and $F_{\widehat{R_P}}$-modules (resp., simple as $F^\infty_{R_P}$ and $F^\infty_{\widehat{R_P}}$-modules).
\end{lemma}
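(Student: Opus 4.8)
The plan is to reduce everything to the defining property of generating morphisms and to the exactness of localization and completion. First I would recall that an $F_R$-module $M$ is, by definition, a direct limit of the system $M_0 \xrightarrow{\beta} F(M_0) \xrightarrow{F(\beta)} F^2(M_0)\to\cdots$ for some generating morphism $\beta\colon M_0\to F(M_0)$, together with the structure isomorphism $\theta\colon M\to F(M)$ induced by the vertical maps. The key observation is that the Frobenius functor $F$ commutes with localization and (since $R$ is Noetherian and $M_0$ can be taken finitely generated in the $F$-finite case) with completion at a prime: $F(M_0)_P \cong F_{R_P}(M_{0,P})$ and $F(M_0)\otimes_R\widehat{R_P}\cong F_{\widehat{R_P}}(M_0\otimes_R\widehat{R_P})$, because the $p$-th power map on $R$ localizes and completes compatibly. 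Applying the exact functors $(-)_P$ and $(-)\otimes_R\widehat{R_P}$ to the direct limit system, and using that these functors commute with direct limits (for localization always, for completion because the system is a filtered colimit of finitely generated modules in the $F$-finite case), one gets that $M_P$ is the limit of $M_{0,P}\xrightarrow{\beta_P} F_{R_P}(M_{0,P})\to\cdots$ and similarly for the completion. Hence $M_P$ and $M\otimes_R\widehat{R_P}$ are $F_{R_P}$- and $F_{\widehat{R_P}}$-modules, $F$-finite if $M$ is, and the same argument applied to $\theta$ shows the structure isomorphisms are the localized/completed ones. The $F^\infty_R$ case is identical with $F$ replaced by $F^e$ throughout, since an $F^\infty_R$-module is an $F^e_R$-module for some $e$.

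For the simplicity statement, suppose $M$ is a simple $F_R$-module and $M_P\neq 0$; I would show that any nonzero $F_{R_P}$-submodule $N\subseteq M_P$ is all of $M_P$. The standard tool is that an $F_R$-submodule of $M$ corresponds, after possibly enlarging $M_0$, to a $\beta$-stable $R$-submodule $N_0\subseteq M_0$ (i.e. $\beta(N_0)\subseteq F(N_0)$), and likewise over $R_P$; given $N$, take a finitely generated $R_P$-submodule $N_0'\subseteq M_{0,P}$ generating $N$ and $\beta_P$-stable, clear denominators to get a finitely generated $\beta$-stable $R$-submodule $N_0\subseteq M_0$ mapping into $N$, and let $N'\subseteq M$ be the $F_R$-submodule it generates. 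Then $N'$ is nonzero, so $N'=M$ by simplicity of $M$, and localizing back gives $N = N'_P = M_P$. The completion case runs the same way using faithful flatness of $\widehat{R_P}$ over $R_P$ to descend a nonzero $F_{\widehat{R_P}}$-submodule to a nonzero $\beta$-stable submodule over $R_P$ and then over $R$; alternatively one composes the localization and completion steps.

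The main obstacle I anticipate is the bookkeeping around generating morphisms: localization and completion do not a priori preserve the property that $\beta$ is injective (a root), only the weaker property of being a generating morphism, and one must be careful that passing to an $F_R$-submodule of $M$ and choosing a $\beta$-stable $N_0$ may require replacing the original root by a larger generating module — so the argument should be phrased entirely in terms of generating morphisms rather than roots, and one should invoke the fact from \cite{LyubeznikFModulesApplicationsToLocalCohomology} that $F$-submodules of an $F$-finite $F_R$-module are again $F$-finite and arise this way. The commutation $F\circ(-\otimes\widehat{R_P}) \cong (-\otimes\widehat{R_P})\circ F_{\widehat{R_P}}$ is where $F$-finiteness (finite generation of $M_0$) is genuinely used, since completion only commutes with the relevant base change and colimit for finitely generated modules; for non-$F$-finite $M$ the localization statement still holds verbatim but the completion statement needs this hypothesis, which matches the parenthetical "($F$-finite)" in the statement.
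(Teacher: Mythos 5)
Note first that the paper does not actually prove this lemma: it simply cites Proposition 2.7 and Corollary 2.9 of \cite{BlickleIntersectionhomologyDmodule} and remarks that the $F^\infty$ case is analogous, so you are supplying an argument the authors chose to outsource. Your first part --- that $M_P$ and $M\otimes\widehat{R_P}$ inherit ($F$-finite) $F$-module structures because the Peskine--Szpiro functor commutes with localization and, on finitely generated modules, with $-\otimes_R\widehat{R_P}$, while $-\otimes_R\widehat{R_P}$ commutes with the direct limit defining $M$ --- is correct and is essentially how the cited sources proceed. (A small caveat: the parenthetical ``($F$-finite)'' in the statement asserts that $F$-finiteness is \emph{preserved}, not that it is assumed; but since the lemma is only ever applied to $F$-finite modules this is harmless.) Your simplicity-under-localization argument is also sound in outline, though ``clear denominators to get a $\beta$-stable $R$-submodule'' is not literally correct: $\beta$ applied to a denominator-cleared generator lands only in $F(N_0)_P$, not in $F(N_0)$. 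The fix is to take the $F_R$-submodule of $M$ generated by the cleared elements; it is nonzero, hence equals $M$ by simplicity, and its localization is contained in $N$, forcing $N=M_P$.

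The genuine gap is the simplicity statement for $M\otimes\widehat{R_P}$. Faithful flatness of $\widehat{R_P}$ over $R_P$ does not let you ``descend a nonzero $F_{\widehat{R_P}}$-submodule to a nonzero $\beta$-stable submodule over $R_P$'': submodules of a completion need not be extended, and a nonzero submodule of the completed root can contract to zero. Already $(y-\xi(x))\subseteq k[[x,y]]$, with $\xi$ a power series transcendental over $k(x)$ and $\xi(0)=0$, contracts to $0$ in $k[x,y]_{(x,y)}$. By \cite[Corollary 2.6]{LyubeznikFModulesApplicationsToLocalCohomology} a nonzero $F_{\widehat{R_P}}$-submodule $\mathcal{N}$ does meet the completed root $\widehat{N_0}$ nontrivially, but the resulting nonzero $F$-stable submodule $\mathcal{N}\cap\widehat{N_0}$ still need not meet $N_{0,P}$, so your argument produces nothing over $R_P$ to which the simplicity of $M$ can be applied. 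Closing exactly this gap is the content of \cite[Proposition 2.7 and Corollary 2.9]{BlickleIntersectionhomologyDmodule}, where the argument goes through minimal roots and the special structure of their $F$-stable (saturated) submodules rather than through flat descent; you should either reproduce that argument or cite it at this step.
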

\begin{proof}
The conclusion for $F$-modules follows from Proposition 2.7 and Corollary 2.9 of \cite{BlickleIntersectionhomologyDmodule}. The argument for $F^\infty_R$-modules is very similar, we omit the details.
\end{proof}
%\todo{a reference or argument for the conclusions for $D$-modules}
%Suppose $M$ is simple and $N \subseteq M_P$ is a $D_{R_P}$-submodule.
%Observe that $N \cap M$ is a $D_R$-submodule of $M$ which is not zero since we can clear any denominator.
%Therefore, $N \cap M = M$, and, because $M$ generates $M_P$ as $R_P$-module, we must have $M_P = N$.

\begin{theorem}
\label{theorem--Upper bounds for D-module length when singular locus has dimension 1}
Let $R=k[x_1,\dots,x_n]$ and $\m=(x_1,\dots,x_n)$. Let $I$ be a homogeneous reduced and equidimensional ideal of $R$. Set $A=R/I$ with $\dim A=d\geq 2$. Suppose the non-$F$-rational locus of $A$ has dimension $\leq1$ (e.g., the nonsingular locus has dimension $\leq1$). Then we have
\begin{eqnarray*}
l_{D(R,k)}(\lc_I^{n-d}(R))&\leq &c+\sum_{\dim R/P=1}\dim_{\kappa(P)} (\lc_{P\widehat{A_P}}^{d-1}(\widehat{A_P}))_s+\dim_k({\lc^d_{\fm}(A)})_s\\
&=&c+\sum_{\dim R/P=1} \dim_{\kappa(P)}(0^*_{\lc_{P\widehat{A_P}}^{d-1}(\widehat{A_P})})_s+\dim_k(0^*_{\lc^d_{\fm}(A)})_s
\end{eqnarray*}
where $c$ is number of minimal primes of $I$.
\end{theorem}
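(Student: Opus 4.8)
The plan is to reduce the global statement to local/completed statements by using the already-established isolated-singularity theorem (Theorem \ref{theorem--D-module length for isolated singularities}) as an induction-type engine, localizing at the finitely many height-one primes where $A$ fails to be $F$-rational. First I would identify the $F_R$-submodule $N \subseteq \lc_I^{n-d}(R) \cong \scr{H}_{R,A}(\lc_{\fm}^d(A))$ coming from the short exact sequence $0 \to \scr{H}_{R,A}(\lc_{\fm}^d(A)/0^*_{\lc_{\fm}^d(A)}) \to \lc_I^{n-d}(R) \to \scr{H}_{R,A}(0^*_{\lc_{\fm}^d(A)}) \to 0$, exactly as in the proof of Theorem \ref{theorem--D-module length for isolated singularities}. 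The first term is a direct sum of $c$ simple $F_R$-modules (by \cite[Corollary 4.2, Theorems 4.3, 4.4]{BlickleIntersectionhomologyDmodule}), contributing $c$ to the length. So the task is to bound $l_{D(R,k)}\big(\scr{H}_{R,A}(0^*_{\lc_{\fm}^d(A)})\big)$.

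The key point is that since the non-$F$-rational locus has dimension $\leq 1$, the module $Q := 0^*_{\lc_{\fm}^d(A)}$ has support of dimension $\leq 1$, so $\scr{H}_{R,A}(Q)$ is an $F$-finite $F_R$-module supported on a dimension-$\leq 1$ set, namely $\{\m\} \cup \{P : \dim R/P = 1, A \text{ not } F\text{-rational at } P\}$. I would filter $\scr{H}_{R,A}(Q)$ by its largest $F_R$-submodule supported only at $\m$; the quotient $M'$ is then an $F_R$-module whose support consists of finitely many one-dimensional primes together with $\m$, and I would bound its length by localizing. Concretely, for each height-one prime $P$ with $\dim R/P = 1$, apply Lemma \ref{lemma--localizing and completing F-module}: $\scr{H}_{R,A}(Q)_P \otimes \widehat{R_P}$ is an $F$-finite $F_{\widehat{R_P}}$-module, and by compatibility of the Lyubeznik functor with localization and completion it is $\scr{H}_{\widehat{R_P}, \widehat{A_P}}$ applied to $(0^*_{\lc_{\fm}^d(A)})_P \otimes \widehat{R_P} = 0^*_{\lc_{P\widehat{A_P}}^{d-1}(\widehat{A_P})}$ (the shift $d \mapsto d-1$ because $\dim \widehat{A_P} = d-1$, as $\dim R/P = 1$). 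Now $\widehat{A_P}$ has an isolated non-$F$-rational point at its maximal ideal, so Theorem \ref{theorem--D-module length for isolated singularities} applies over $\widehat{R_P}$ and its simple-module contribution from $0^*$ is $\dim_{\kappa(P)}(0^*_{\lc_{P\widehat{A_P}}^{d-1}(\widehat{A_P})})_s$; and by Lemma \ref{lemma--stable parts of tight closure of top local cohomology modules} this equals $\dim_{\kappa(P)}(\lc_{P\widehat{A_P}}^{d-1}(\widehat{A_P}))_s$. Summing the contributions of the composition factors of $M'$ supported at the various $P$, plus the corank of the part supported at $\m$ — which is $\dim_k(0^*_{\lc_{\fm}^d(A)})_s = \dim_k(\lc_{\fm}^d(A))_s$ via \cite[Proposition 4.10]{LyubeznikFModulesApplicationsToLocalCohomology} and Lemma \ref{lemma--stable parts of tight closure of top local cohomology modules} — gives the claimed inequality. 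The two displayed expressions agree by a final application of Lemma \ref{lemma--stable parts of tight closure of top local cohomology modules}.

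The main obstacle I anticipate is the bookkeeping that passes from a bound on the localized/completed $F_{\widehat{R_P}}$-module length to a bound on the global $D(R,k)$-module length: a composition series of $\scr{H}_{R,A}(Q)$ as a $D(R,k)$-module need not localize to a composition series at $P$, and a priori a simple global factor could be killed by localization at every $P$. I would handle this by arguing instead with the $F_R$-submodule filtration: each step of a maximal $F_R$-submodule filtration of $M'$ either is supported only at $\m$ (and such steps number $\dim_k(0^*_{\lc_{\fm}^d(A)})_s$ by the corank computation after quotienting out the $\m$-supported part is absorbed) or survives after localization at a unique $P$ with $\dim R/P=1$, where it must be one of the at-most-$\dim_{\kappa(P)}(0^*_{\lc_{P\widehat{A_P}}^{d-1}(\widehat{A_P})})_s$ simple factors guaranteed by the isolated-singularity case; distinctness of these primes makes the sum over $P$ an upper bound. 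Finally, $l_{D(R,k)} = l_{F_R}$ for the relevant pieces need not hold over a non-separably-closed $k$, which is why the statement is phrased as an inequality for $l_{D(R,k)}$ and only uses $l_{F_R} \leq l_{D(R,k)}$ from \eqref{equation--basic relation on length in different categories} together with the simple-$F_R$-module structure of the first term.
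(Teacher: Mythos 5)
Your overall strategy --- splitting off the direct sum of $c$ simples coming from $\lc_\m^d(A)/0^*_{\lc_\m^d(A)}$, then controlling the remaining piece by localizing and completing at the finitely many dimension-one primes where $A$ is not $F$-rational and invoking the corank/stable-part identification from \cite[Proposition 4.10]{LyubeznikFModulesApplicationsToLocalCohomology} --- is indeed the paper's strategy. But there is a genuine gap in how you count the composition factors supported at $\m$. You propose to quotient $\scr{H}_{R,A}(0^*_{\lc_\m^d(A)})$ by its \emph{largest} $F_R$-submodule supported only at $\m$ and then assert that the $\m$-supported steps ``number $\dim_k(0^*_{\lc_\m^d(A)})_s$ by the corank computation.'' The corank only counts the copies of $E$ sitting at the \emph{top} of the module, i.e.\ in the quotient by the \emph{smallest} submodule whose quotient is $\m$-supported; it says nothing about $E$-composition factors interleaved lower in a composition series (an extension of a simple with one-dimensional support by $E$ has corank $0$ but one $E$-factor). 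To rule this out one needs the structural fact that \emph{all} $\m$-supported composition factors can be pushed to the top: this is Claim \ref{claim--graded F-module filtration}, whose proof rests on \cite[Theorem 2.9 (3)]{LyubeznikSinghWaltherlocalcohomologysupportedatdeterminantalideals} and is precisely where the graded hypothesis enters. Your proposal never uses the graded hypothesis, and the paper explicitly remarks that the lack of a non-graded analogue of this step is the obstruction to a local version of the theorem.

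Two smaller points. First, in your ``obstacle'' paragraph you switch to a maximal $F_R$-submodule filtration of $M'$; that computes $l_{F_R}$, which is a \emph{lower} bound for $l_{D(R,k)}$, so it cannot yield the desired upper bound. The paper instead refines $M/L$ by a $D(R,k)$-composition series and, for each dimension-one prime $P$, bounds the number of factors with associated prime $P$ by noting that each such factor, after $-\otimes\widehat{R_P}$, contributes at least one copy of $E(\widehat{R_P}/P\widehat{R_P})$ to $\lc^{n-d}_{I\widehat{R_P}}(\widehat{R_P})/(L\otimes\widehat{R_P})$, which is a direct sum of exactly $\crk\lc^{n-d}_{I\widehat{R_P}}(\widehat{R_P})$ such copies (Claim \ref{claim--number of D-module composite factor}); no appeal to Theorem \ref{theorem--D-module length for isolated singularities} over $\widehat{R_P}$ is needed. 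Second, your identification of $(0^*_{\lc_\m^d(A)})_P\otimes\widehat{R_P}$ with $0^*_{\lc^{d-1}_{P\widehat{A_P}}(\widehat{A_P})}$ implicitly assumes tight closure commutes with localization and completion; the paper avoids this by expressing the local contribution as $\crk\lc^{n-d}_{I\widehat{R_P}}(\widehat{R_P})=\dim_{\kappa(P)}(\lc^{d-1}_{P\widehat{A_P}}(\widehat{A_P}))_s$ and only afterwards applying Lemma \ref{lemma--stable parts of tight closure of top local cohomology modules} to $\widehat{A_P}$.
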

\begin{proof}
Clearly the second equality follows from Lemma \ref{lemma--stable parts of tight closure of top local cohomology modules}. Therefore it suffices to prove the first inequality. We may assume that the dimension of the non-$F$-rational locus is $1$, since otherwise the result follows from Theorem \ref{theorem--D-module length for isolated singularities} (the second term is 0). We begin with the following claim:
\begin{claim}
\label{claim--graded F-module filtration}
There exists graded $F_R$-submodules $$0\subseteq L\subseteq M\subseteq \lc_I^{n-d}(R)$$ such that every $D(R,k)$-module composition factor of $L$ is supported at a minimal prime of $A$, every $D(R,k)$-module composition factor of $M/L$ is supported at a dimension $1$ prime, and $\lc_I^{n-d}(R)/M$ is supported only at $\m$.
\end{claim}
\begin{proof}[Proof of Claim]
We have a short exact sequence
\[0\to 0^*_{\lc_{\fm}^d(A)}\to \lc_{\fm}^d(A)\to \lc_{\fm}^d(A)/0^*_{\lc_{\fm}^d(A)}\to 0\]
that induces:
\[0\to {}^*\scr{H}_{R, A}(\lc_{\fm}^d(A)/0^*_{\lc_{\fm}^d(A)})\to {}^*\scr{H}_{R, A}(\lc_{\fm}^d(A))\cong \lc_I^{n-d}(R)\to {}^*\scr{H}_{R, A}(0^*_{\lc_{\fm}^d(A)})\to 0.\]
By the graded version of \cite[Corollary 4.2 and Theorem 4.4]{BlickleIntersectionhomologyDmodule}, ${}^*\scr{H}_{R, A}(\lc_{\fm}^d(A)/0^*_{\lc_{\fm}^d(A)})$ is a direct sum of simple $D(R,k)$-modules, each supported at a different minimal prime of $A$. So we set $L={}^*\scr{H}_{R, A}(\lc_{\fm}^d(A)/0^*_{\lc_{\fm}^d(A)})$. The existence of $M$ follows by applying \cite[Theorem 2.9 (3)]{LyubeznikSinghWaltherlocalcohomologysupportedatdeterminantalideals} to ${}^*\scr{H}_{R, A}(0^*_{\lc_{\fm}^d(A)})$, which is a graded $F$-finite $F_R$-module. Note that the support of each $D(R,k)$-module composition factor of $M/L$ has dimension $1$. This is because the support of ${}^*\scr{H}_{R, A}(0^*_{\lc_{\fm}^d(A)})$ has dimension $1$ since the dimension of the non-$F$-rational locus is $1$.
\end{proof}

We know that $l_{D(R,k)}(L)=c$. Moreover, it is clear from the above claim that $M$ is the smallest $F_R$-submodule of $\lc_I^{n-d}(R)$ such that $\lc_I^{n-d}(R)/M$ is only supported at $\m$. So $\lc_I^{n-d}(R)/M$ is isomorphic, {\it as a $D(R,k)$-module}, to $E^{\oplus r}\cong\lc_\m^d(R)^{\oplus r}$ where $r=\crk\lc_I^{n-d}(R)$ by \cite[Lemma (c)]{LyubeznikInjectivedimensionofDmodulescharacteristicfreeapproach} and the definition of corank. Thus we have $l_{D(R,k)}(\lc_I^{n-d}(R)/M)=\crk\lc_I^{n-d}(R)=\dim_k(\lc_\m^d(A))_s$ by \cite[Proposition 4.10]{LyubeznikFModulesApplicationsToLocalCohomology}.

It remains to estimate the $D(R,k)$-module length of $M/L$. Suppose we have
\begin{equation}
\label{equation--D-module filtration}
0\subseteq L=M_0\subseteq M_1\subseteq M_2\subseteq\cdots\subseteq M_t=M\subseteq \lc_I^{n-d}(R)
\end{equation}
such that each $N_i=M_i/M_{i-1}$ is a simple $D(R,k)$-module. We know that each $N_i$ has a unique associated prime $P$ with $\dim R/P=1$ and $A_P$ not $F$-rational.

\begin{claim}
\label{claim--number of D-module composite factor}
The number of $N_i$ such that $\Ass(N_i)=P$ is at most $\crk \lc_{I\widehat{R_P}}^{n-d}(\widehat{R_P})$.
\end{claim}
\begin{proof}[Proof of Claim]
We localize (\ref{equation--D-module filtration}) at $P$ and complete. We have
\[
0\subseteq L\otimes\widehat{R_P}=M_0\otimes\widehat{R_P}\subseteq M_1\otimes \widehat{R_P}\subseteq\cdots \subseteq M_t\otimes \widehat{R_P}
=\lc_{I\widehat{R_P}}^{n-d}(\widehat{R_P})
\]
with successive quotients $N_i\otimes \widehat{R_P}$ (the last equality follows because $\lc_I^{n-d}(R)/M$ is supported only at $\m$). Each $N_i\otimes \widehat{R_P}$ is either $0$ or a $D(\widehat{R_P},k)$-module supported only at $P\widehat{R_P}$ (and thus a direct sum of $E(\widehat{R_P}/P\widehat{R_P})$), depending on whether $\Ass(N_i)\neq P$ or $\Ass(N_i)= P$. Therefore $\lc_{I\widehat{R_P}}^{n-d}(\widehat{R_P})/(L\otimes\widehat{R_P})$, at least as an $\widehat{R_P}$-module, is isomorphic to $E(\widehat{R_P}/P\widehat{R_P})^r$. The number of $N_i$ such that $\Ass(N_i)=P$ is thus $\leq r$.

But $L\otimes\widehat{R_P}$ is a direct sum of simple $F_{\widehat{R_P}}$-submodule of $\lc_{I\widehat{R_P}}^{n-d}(\widehat{R_P})$ supported at minimal primes of $\widehat{A_P}$ by Lemma \ref{lemma--localizing and completing F-module}, so we have $r= \crk{\lc_{I\widehat{R_P}}^{n-d}(\widehat{R_P})}$ by the definition of corank. This finishes the proof of the claim.
\end{proof}

Applying the above claim to (\ref{equation--D-module filtration}) we get:

\[l_{D(R,k)}(M/L)\leq \sum_{\dim R/P=1} \crk \lc_{I\widehat{R_P}}^{n-d}(\widehat{R_P}).\]
Because$\scr{H}_{\widehat{R_P}, \widehat{A_P} }(\lc_{P\widehat{A_P}}^{d-1}(\widehat{A_P}))
\cong \lc_{I\widehat{R_P}}^{n-d}(\widehat{R_P})$ (the indices match because $I$ is equidimensional),
by \cite[Proposition 4.10]{LyubeznikFModulesApplicationsToLocalCohomology}, we have
\[
\crk \lc_{I\widehat{R_P}}^{n-d}(\widehat{R_P})=
\dim_{\kappa(P)} (\lc_{P\widehat{A_P}}^{d-1}(\widehat{A_P}))_s.
\]
Finally, summing up the ${D(R,k)}$-module length of $L$, $M/L$, and $\lc_I^{n-d}(R)/M$, we have:
$$l_{D(R,k)}(\lc_I^{n-d}(R))\leq c+\sum_{\dim R/P=1}\dim_{\kappa(P)} (\lc_{P\widehat{A_P}}^{d-1}(\widehat{A_P}))_s+\dim_k({\lc^d_{\fm}(A)})_s.$$
\end{proof}

%\begin{remark}
%If we replace $l_D$ by $l_F$, then the analogue inequalities in Theorem \ref{theorem--D-module length in terms of dimensions of stable parts} fail, see explicit examples in Section 5. The main reason the above proof doesn't pass through is because if the residue field is not separably closed (which is always the case for $\kappa(P)$ as long as $P\neq\m$), then an $F$-finite $F_{\widehat{R_P}}$-module supported only at $P\widehat{R_P}$ is not necessarily isomorphic, {\it as an $F_{\widehat{R_P}}$-module}, to a direct sum of $E(\widehat{R_P}/P\widehat{R_P})$ \cite{HochsterfinitenesspropertyofLyubeznikFmodule}, \cite{MaThecategoryofF-moduleshasfiniteglobaldimension}.
%\end{remark}

We end this section with some remarks and questions regarding Theorem \ref{theorem--Upper bounds for D-module length when singular locus has dimension 1}:

\begin{remark}
It is clear that the sum $\sum_{\dim R/P=1}\dim_{\kappa(P)} (\lc_{P\widehat{A_P}}^{d-1}(\widehat{A_P}))_s$ in Theorem \ref{theorem--Upper bounds for D-module length when singular locus has dimension 1} is a finite sum: in fact we only need to consider those primes $P$ such that $A_P$ is not $F$-rational (which form a finite set by our assumption). We ought to point out that, more generally, without any assumption on the $F$-rational locus,  \cite[Proposition 4.14]{LyubeznikFModulesApplicationsToLocalCohomology} shows that there are only finitely many prime ideals $P$ such that $(\lc_{P\widehat{A_P}}^{j}(\widehat{A_P}))_s\neq 0$.
\end{remark}

\begin{remark}
We do not know whether the inequality in Theorem \ref{theorem--Upper bounds for D-module length when singular locus has dimension 1} is an equality. This is due to the fact that, in the proof of Claim \ref{claim--number of D-module composite factor}, we do not know whether the $D(\widehat{R_P},k)$-module $N_i\otimes \widehat{R_P}$ is isomorphic to a {\it single copy} of $E(\widehat{R_P}/P\widehat{R_P})$.
\end{remark}

In general, a simple $D(R,k)$-module may not stay simple as a $D(\widehat{R}_P,\kappa(P))$-module after taking localization and completion. We point out the following example which is derived from \cite[Example 5.1]{BlickleDmodulestructureofRFmodules}.
\begin{example}
\label{example: completion of simple no longer simple}
Let $R=k[x]$ where $k$ is an algebraically closed field of positive characteristic. Let $M=R\oplus R$ be a free $R$-module of rank $2$. We give $M$ an $F_R$-module structure by setting the composition map
\[R\oplus R\xrightarrow{\theta_M}F(R)\oplus F(R)\xrightarrow{\theta_R^{-1}\oplus\theta_R^{-1}} R\oplus R\]
to be the map represented by the matrix:
\[
\begin{pmatrix}
-x & 1 \\
1 & 0
\end{pmatrix}
\]
where $\theta_R$ denotes the standard isomorphism $R\cong F(R)$.

Next we pick a nonzero simple $F_R^\infty$-module $N\subseteq M$ (note that the only associated prime of $N$ is $0$). By \cite[Corollary 4.7]{BlickleDmodulestructureofRFmodules}, $N$ must be a simple $D(R,k)$-module since $k$ is algebraically closed. However, after we localize at $0$, that is, tensor with the fraction field $k(x)$ of $R$, $M\otimes_R k(x)$ becomes a simple $F_{k(x)}^\infty$-module because $M\otimes_R k(x)^{1/p^\infty}$ is a simple $F_{k(x)^{1/p^\infty}}^\infty$-module by \cite[Example 5.1]{BlickleDmodulestructureofRFmodules}.\footnote{Note that the matrix we used here is the inverse of the matrix as in \cite[Example 5.1]{BlickleDmodulestructureofRFmodules}. This is because we are describing the matrix representing the $F$-module structure on $M\otimes_Rk(x)^{1/p^\infty}$ while Blickle was working with the matrix representing the Frobenius action on $M\otimes_Rk(x)^{1/p^\infty}$. We leave the reader to check that they define the same $F$-module structure on $M\otimes_Rk(x)^{1/p^\infty}$.} Thus we must have $N\otimes_R k(x)\cong M\otimes_R k(x)$, but $M\otimes_R k(x)$ is not a simple $D(k(x), k(x))$-module because obviously every one-dimensional $k(x)$-subspace is a nontrivial $D(k(x), k(x))$-submodule.
\end{example}

%Hence, we ask:
%\begin{question}
%\label{question: simplicity under completion}
%Let $M$ be a simple $D(R,k)$-module and $P$ be a prime ideal of $R$. Is it true that $M\otimes_R\widehat{R}_P$ is a simple $D(\widehat{R}_P,\kappa(P))$-module?
%\end{question}

%A positive answer to Question \ref{question: simplicity under completion} will imply that we actually have an equality in Theorem \ref{theorem--Upper bounds for D-module length when singular locus has dimension 1}.

\begin{remark}
One approach to generalizing Theorem \ref{theorem--Upper bounds for D-module length when singular locus has dimension 1} is to find an $F$-submodule $M$ of $\lc^{n-d}_I(R)$ such that none of the composition factors of $M$ has 0-dimensional support and the support of $\lc^{n-d}_I(R)/M$ is contained in $\{\fm\}$. In the graded case, the existence of such an $M$ follows from the proof of \cite[Theorem 2.9]{LyubeznikSinghWaltherlocalcohomologysupportedatdeterminantalideals}. We don't know whether \cite[Theorem 2.9]{LyubeznikSinghWaltherlocalcohomologysupportedatdeterminantalideals} can be extended to non-graded case.
\end{remark}

Hence it is natural to ask the following:

\begin{question}
Does there always exist an $F$-submodule $M$ of $\lc^{n-d}_I(R)$ such that none of the composition factors of $M$ has 0-dimensional support while the support of $\lc^{n-d}_I(R)/M$ is contained in $\{\fm\}$?
\end{question}

Despite the above remarks and questions, we still expect that there should be an analogue of Theorem \ref{theorem--D-module length for isolated singularities} and Theorem \ref{theorem--Upper bounds for D-module length when singular locus has dimension 1} or similar estimates in the local case and without the restriction on the non-$F$-rational locus.

%\begin{remark}
%In the case $R=k[x_1,\dots,x_n]$, there is a general bound on the $D(R,k)$-module length of local cohomology modules: if the degree of $f$ (not necessarily %homogeneous) is $d$, then
%\[\lc^1_{f}(R)\leq (d+1)^n-1.\]
%However, this bound is not sharp at all.
%\todo{find a reference or sketch a proof of this general bound}
%\end{remark}

%\begin{remark}
%When $A=R/I$ has an isolated non-$F$-rational point, by Matlis duality we have $\dim (0^*_{\lc^d_{\fm}(A)})_s =\dim \frac{\sigma(\omega_A)}{\tau(\omega_A)}$, %where $\sigma(\omega_A)$ and $\tau(\omega_A)$ denote the parameter non-$F$-pure module and parameter test module respectively. In particular, when $f$ is an %irreducible (but not necessarily homogeneous) isolated singularity and $A=R/(f)$, we have $\dim \frac{\sigma(A)}{\tau(A)}=l_D(\lc_f^1(R))-1\leq (d+1)^n-2$ where %$d=\deg f$.
%\end{remark}

%%%%%%%%%%%%%%%%%%%%%%%%%%%%%%%%%%%%%%%%%%%%%%%%%%%%%%%%%%%%%%%%%%%%%%%%%%%%%%%%%%%%%%%%%%%%%%%%%%%%%%%%%%%%%%%%%%%%%%%%%%%%%%%%%%%%%%%%%%
\section{A lower bound on $F$-module length of local cohomology modules}
\label{section: F-pure}
%%%%%%%%%%%%%%%%%%%%%%%%%%%%%%%%%%%%%%%%%%%%%%%%%%%%%%%%%%%%%%%%%%%%%%%%%%%%%%%%%%%%%%%%%%%%%%%%%%%%%%%%%%%%%%%%%%%%%%%%%%%%%%%%%%%%%%%%%%
In this section we will give lower bounds on $l_{F_R}(\lc_I^c(R))$. Throughout this section we will still assume $R=k[[x_1,\dots, x_n]]$ or $k[x_1,\dots,x_n]$ with $\m=(x_1,\dots,x_n)$ where $k$ is a field of characteristic $p>0$, and $A=R/I$ be reduced and equidimensional or graded reduced and equidimensional of dimension $d\geq 1$.
Henceforth in this section $E=E_R(k)$ will denote the injective hull of the residue field of $R$ and
$E_A=E_A(k)= \Ann_E I$ will denote the injective hull of the residue field of $A$.

We first collect definitions and facts from \cite{SharpGradedAnnihilatorsofModulesovertheFrobeniusSkewpolynomialRingandTC} and \cite{KatzmanParameterTestIdealOfCMRings}.
Given an Artinian $A\{f\}$-module $W$, a {\it special ideal} of $W$ is an ideal of $A$ that is also the annihilator of some $A\{f\}$-submodule $V\subseteq W$, a {\it special prime} is a special ideal that is also a prime ideal (note that the special ideals depend on the $A\{f\}$-module structure on $W$, i.e., the Frobenius action $f$ on $W$).
An important result of Sharp \cite[Corollary 3.7]{SharpGradedAnnihilatorsofModulesovertheFrobeniusSkewpolynomialRingandTC} and Enescu-Hochster
\cite[Theorem 3.6]{EnescuHochsterTheFrobeniusStructureOfLocalCohomology} shows that, when $f$ acts injectively on $W$, the number of special primes of $W$ is finite.

The module of inverse polynomials $E$ comes equipped with a natural Frobenius map $T$ given by $T(\lambda x_1^{-\alpha_1} \dots x_n^{-\alpha_n})=\lambda^p x_1^{-p\alpha_1} \dots x_n^{-p\alpha_n}$
for all $\lambda\in k$ and $\alpha_1, \dots, \alpha_n\geq 0$.
Any Frobenius map on $E$ has the form $uT$ where $u\in R$ and
any Frobenius map on $E_A$ has that form with
$u\in (I^{[p]}:I)$. (cf.~\cite[Proposition 3.36]{BlickleThesis}).
Such an action $uT$ on $E_A$ is injective if and only if $u\notin \m^{[p]}$, and is nonzero if and only if $u\notin I^{[p]}$.
If we now specialize the notion of special ideals to the $A\{f\}$-module $E_A$ where $f=uT$, we see that these are
ideals $J$ such that $u\in J^{[p]}:J$
and we refer to these as  {\it $u$-special ideals}
(\cite[Theorem 4.3]{KatzmanParameterTestIdealOfCMRings}). A {\it $u$-special prime} is a $u$-special ideal that is also a prime ideal.

\subsection{$F$-pure case} Our main result in this subsection is the following:

%We first recall that, given an Artinian $A\{f\}$-module $W$, a {\it special prime} of $W$ is a prime ideal of $A$ that is also the annihilator of some $F$-stable submodule $V\subseteq W$. A result of Sharp \cite[Corollary 3.7]{SharpGradedAnnihilatorsofModulesovertheFrobeniusSkewpolynomialRingandTC} and Enescu-Hochster \cite[Theorem 3.6]{EnescuHochsterTheFrobeniusStructureOfLocalCohomology} shows that, when $F$ acts injectively on $W$, the number of special primes of $W$ is finite.

\begin{theorem}
\label{theorem--lower bounds of F-module length for F-pure}
%Let $R=k[[x_1,\dots, x_n]]$ or $k[x_1,\dots,x_n]$ with $\m=(x_1,\dots,x_n)$, and

Assume $A=R/I$ is reduced and equidimensional of dimension $d\geq 1$.
Suppose $A$ is $F$-pure. Then $l_{F_R}(\lc_I^{n-j}(R))$ is at least the number of special primes of $\lc_\m^j(A)$.
Moreover, when $A$ is quasi-Gorenstein, $l_{F_R}(\lc_I^{n-d}(R))$ is exactly the number of special primes of $\lc_\m^d(A)$.
\end{theorem}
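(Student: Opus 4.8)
The plan is to produce, for the $F$-pure quotient $A = R/I$, an explicit order-preserving correspondence between $F_R$-submodules of $\lc_I^{n-j}(R)$ and certain $A\{f\}$-submodules of $\lc_\m^j(A)$, and then to extract the lower bound by exhibiting, for each special prime of $\lc_\m^j(A)$, a distinct composition factor. First I would recall that since $A$ is $F$-pure, the natural Frobenius action on $\lc_\m^j(A)$ is injective (Fedder, as cited in \S\ref{section: preliminaries}); this lets us realize $\lc_I^{n-j}(R)$ as $\scr H_{R,A}(\lc_\m^j(A))$ (resp.\ ${}^*\scr H_{R,A}$ in the graded case) with $\lc_\m^j(A)^\vee$ serving as a genuine \emph{root} of this $F_R$-module. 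The key point I would establish is a functoriality/exactness statement: because $f$ acts injectively, the functor $\scr H_{R,A}$ carries the lattice of $A\{f\}$-submodules of $\lc_\m^j(A)$ (equivalently, $f$-compatible quotients of the root $\lc_\m^j(A)^\vee$) injectively and order-preservingly into the lattice of $F_R$-submodules of $\lc_I^{n-j}(R)$. Concretely, an $A\{f\}$-submodule $V \subseteq \lc_\m^j(A)$ with annihilator a special ideal $J$ gives, by Matlis duality and the construction of $\scr H_{R,A}$, a bona fide $F_R$-submodule of $\lc_I^{n-j}(R)$, and distinct special ideals give distinct such submodules since one can read the special ideal back off as (essentially) the annihilator data of the associated subquotient.

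Next I would translate "number of special primes" into a lower bound on length. Each special prime $P$ of $\lc_\m^j(A)$ is, by Sharp / Enescu--Hochster (cited above), one of finitely many primes occurring as annihilators of $A\{f\}$-submodules; the crucial structural input is that the $A\{f\}$-submodule lattice of $\lc_\m^j(A)$, under $f$ injective, has a chain realizing all the special primes, e.g.\ via the minimal $A\{f\}$-submodule annihilated by each $P$, and these can be arranged so that consecutive quotients are nonzero and "supported at" distinct $P$'s. Applying the correspondence from the first paragraph produces a strictly increasing chain of $F_R$-submodules of $\lc_I^{n-j}(R)$ of length at least the number of special primes, which gives $l_{F_R}(\lc_I^{n-j}(R)) \geq \#\{\text{special primes of } \lc_\m^j(A)\}$.

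For the "moreover" clause, assume in addition $A$ is quasi-Gorenstein, so $\lc_\m^d(A) \cong E_A$ as $A\{f\}$-modules and the Frobenius action has the form $uT$ with $u \in (I^{[p]}:I)$, $u \notin \m^{[p]}$ (by $F$-purity). Here the $A\{f\}$-submodule structure of $E_A$ is completely controlled: the $u$-special ideals are exactly the ideals $J$ with $u \in J^{[p]}:J$, and I would argue that \emph{every} $A\{f\}$-submodule of $E_A$ is an annihilator submodule $\Ann_{E_A} J$ for such a $J$ — this is where the Gorenstein (self-dual) structure is essential, making the submodule lattice of $E_A$ anti-isomorphic to the lattice of $u$-special ideals. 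One then shows the correspondence of the first paragraph is in fact a bijection onto all $F_R$-submodules of $\lc_I^{n-d}(R)$: surjectivity uses that $\lc_\m^d(A)^\vee \cong A$ (quasi-Gorenstein) is a root, so any $F_R$-submodule arises from an $f$-compatible submodule of the root, i.e.\ from a $u$-special ideal. Counting composition factors, the length equals the length of a maximal chain of $u$-special ideals, and a maximal such chain refines to one passing through exactly the $u$-special \emph{primes} (since between a special ideal and its radical-type neighbors the only primes that occur are the special primes, by the Enescu--Hochster finiteness and the structure of the lattice); hence $l_{F_R}(\lc_I^{n-d}(R))$ is exactly the number of special primes of $\lc_\m^d(A)$.

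The main obstacle I anticipate is the surjectivity/exact-correspondence step in the Gorenstein case: showing that the $F_R$-submodule lattice of $\lc_I^{n-d}(R)$ is captured \emph{entirely} by $f$-compatible submodules of a fixed root, rather than just receiving a map from them. This requires care because an $F_R$-submodule need not a priori be generated by its intersection with a chosen root; one needs the fact (from Lyubeznik's theory of $F$-finite $F$-modules and roots) that when $A$ is $F$-pure the root $\lc_\m^d(A)^\vee$ is the minimal root and every $F_R$-submodule has a root contained in it, so the lattice of $F_R$-submodules embeds into the lattice of subobjects of the minimal root compatibly with $f$. Pinning this down — and checking that the resulting bijection is exactly onto $u$-special ideals, with composition length equal to the number of special primes rather than merely bounded by the number of all special ideals — is the delicate heart of the argument; the $F$-pure-but-not-Gorenstein direction only needs the easier "injective correspondence" half and so only yields the inequality.
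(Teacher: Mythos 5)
Your overall architecture matches the paper's closely. For the quasi-Gorenstein half you propose exactly what the paper does: identify $\lc_\m^d(A)$ with $E_A$, use the bijection between $A\{f\}$-submodules and $u$-special ideals, order the special primes by decreasing height, take the chain $Q_j=P_1\cap\cdots\cap P_j$, and deduce simplicity of the factors from the Sharp/Enescu--Hochster fact that (for an injective Frobenius action on $E_A$) every special ideal is an intersection of special primes. You also correctly identify the root-correspondence (Lyubeznik's bijection between $F_R$-submodules and $f$-compatible submodules of a root) as the point needing care. For the inequality, your chain-of-submodules argument is a mild variant of the paper's per-prime argument (the paper instead shows, for each special prime $P$ with $\Ann N=P$, that $\scr{H}_{R,A}(N)$ is a quotient of $\lc_I^{n-j}(R)$ having a composition factor with unique associated prime $P$); both work.

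There is, however, one genuine gap: you repeatedly justify the key steps by the injectivity of the Frobenius action on $\lc_\m^j(A)$ ($F$-injectivity, via Fedder), but injectivity on the ambient module is not enough. Two places need more. First, for $\lc_\m^j(A)^\vee$ to be a root of $\scr{H}_{R,A}(\lc_\m^j(A))$ you need the natural map $F(\lc_\m^j(A))\to \lc_\m^j(A)$ to be \emph{surjective}; for $j<d$ this is not automatic and does not follow from injectivity of $f$. Second, and more seriously, for your chain $V_1\subsetneq V_2\subsetneq\cdots$ to map to a \emph{strictly} increasing chain of $F_R$-submodules you need $\scr{H}_{R,A}(V_j/V_{j-1})\neq 0$, i.e., that $f$ is non-nilpotent on each quotient $V_j/V_{j-1}$. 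Injectivity of $f$ on $\lc_\m^j(A)$ does not pass to such subquotients (this is precisely the difference between $F$-injectivity and anti-nilpotence). The missing ingredient, and the one the paper actually uses, is that for $F$-pure $A$ the module $\lc_\m^j(A)$ is \emph{anti-nilpotent} (Ma's theorem): the induced Frobenius on $\lc_\m^j(A)/N'$ is injective for every $A\{f\}$-submodule $N'$. This single fact repairs both points (surjectivity of $F(M)\to M$ by applying anti-nilpotence to the image, and non-nilpotence on the quotients $V_j/V_{j-1}$, which sit inside $\lc_\m^j(A)/V_{j-1}$). With that substitution your argument closes; without it the chain could collapse under $\scr{H}_{R,A}$ and the lower bound would not follow.
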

\begin{proof}
Let $P$ be a special prime of $\lc_\m^j(A)$. Take an $A\{f\}$-submodule $N\subseteq \lc_\m^j(A)$ such that $\Ann N=P$. Recall that the Frobenius action on $N$ induces a map $F(N)\to N$. We claim that this map is surjective: let $N'\subseteq N$ be the image, we have $N'\subseteq N\subseteq \lc_\m^j(A)$ are $A\{f\}$-submodules such that the Frobenius action on $N/N'$ is nilpotent. But $\lc_\m^j(A)$ is anti-nilpotent (i.e., the Frobenius action on $H_\m^j(R)/N'$ is injective) by \cite[Theorem 3.7]{MaFinitenesspropertyoflocalcohomologyforFpurerings}. So we must have $N'=N$ and thus $F(N)\twoheadrightarrow N$ is surjective.

Taking the Matlis dual, we get $N^\vee\hookrightarrow F(N)^\vee\cong F(N^\vee)$. This shows that $N^\vee$ is a root of $\scr{H}_{R, A}(N)$ (recall that by definition, $\scr{H}_{R,A}(N)=\varinjlim(N^\vee\to F(N^\vee)\to F^2(N^\vee)\to\cdots)$). In particular, we know that the set of associated primes of $N^\vee$ is the same as the set of associated primes of $\scr{H}_{R, A}(N)$ (this follows easily from the argument in \cite[Remark 2.13]{LyubeznikFModulesApplicationsToLocalCohomology}). But $\Ann N=\Ann N^\vee=P$ and $N^\vee$ is a finitely generated $R$-module, thus $P$ is a minimal associated prime of $N^\vee$ and hence a minimal associated prime of $\scr{H}_{R,A}(N)$. This implies that $\scr{H}_{R,A}(N)$ must have a simple $F_R$-module composition factor with $P$ its unique associated prime. But we have $\lc_I^{n-j}(R)
\cong\scr{H}_{R,A}(\lc_\m^j(A))\twoheadrightarrow \scr{H}_{R,A}(N)$, hence for every special prime $P$ of $\lc_\m^j(A)$, $\lc_I^{n-j}(R)$ has a simple $F_R$-module composition factor with $P$ its unique associated prime. This proves that $l_{F_R}(\lc_I^{n-j}(R))$ is at least the number of special primes of $\lc_\m^j(A)$.

Finally, when $A$ is quasi-Gorenstein, $\lc_\m^d(A)\cong E_A$, the injective hull of the residue field. So there is a one-one correspondence between $A\{f\}$-submodules of $\lc_\m^d(A)$ and their annihilator ideals. Let $P_1,\dots,P_m$ be all the special primes with $\height P_1\geq\height P_2\geq\cdots\geq \height P_m$.  Let $Q_j=P_1\cap P_2\cap\cdots\cap P_j$. We have an ascending chain of $A\{f\}$-submodules of $\lc_\m^d(A)=E_A$: $$0\subsetneqq \Ann_EQ_1\subsetneqq\Ann_EQ_2\subsetneqq\cdots\subsetneqq\Ann_EQ_m=E_A.$$ It suffices to show that $\scr{H}_{R,A}(\Ann_EQ_j/\Ann_EQ_{j-1})$ is a nonzero simple $F$-module. It is nonzero because the Frobenius action on $\Ann_EQ_j/\Ann_EQ_{j-1}$ is not nilpotent (in fact it is injective because $\lc_\m^d(A)$ is anti-nilpotent). If it is not simple, then there exists another $A\{f\}$-submodule $M$ such that $\Ann_EQ_{j-1}\subsetneqq M\subsetneqq \Ann_EQ_{j}$, which implies $Q_j\subsetneqq \Ann M\subsetneqq Q_{j-1}$. But since the Frobenius action on $\lc_\m^d(A)=E_A$ is injective and $M$ is an $A\{f\}$-submodule, $\Ann M$ is an intersection of the special primes by \cite[Corollary 3.7]{SharpGradedAnnihilatorsofModulesovertheFrobeniusSkewpolynomialRingandTC} or \cite[Theorem 3.6]{EnescuHochsterTheFrobeniusStructureOfLocalCohomology}. This is then impossible by our height hypothesis on $P_i$ and the definition of $Q_j$.
\end{proof}

\begin{remark}
When $A$ is not $F$-pure, the number of special primes of $\lc_\m^j(A)$ is not necessarily a lower bound of $l_{F_R}(\lc_I^{n-j}(R))$. In fact, in Example \ref{example--Calabi-Yau hypersurface}, when $A=R/f$ is a Calabi-Yau hypersurface that is not $F$-pure, then $l_{F_R}(\lc_f^{1}(R))=1$ while the number of special primes of $\lc_\m^1(A)$ is 2: $(f)$ and $\m$ are both special primes of $\lc_\m^1(A)$. So the first conclusion of Theorem \ref{theorem--lower bounds of F-module length for F-pure} need not hold when $A$ is not $F$-pure.
\end{remark}

\begin{example}
Let $A=k[x_1,\dots,x_n]/(x_ix_j \mid 1\leq i<j\leq n)=R/I$. Then $A$ is a one-dimensional $F$-pure ring, and $A$ is not Gorenstein when $n\geq 3$. A straightforward computation using \cite[Theorem 5.1]{EnescuHochsterTheFrobeniusStructureOfLocalCohomology} shows that the special primes of $\lc_\m^1(A)$ are $P_i=(x_1,\dots,\widehat{x}_i,\dots,x_n)$ and $\m$ (thus there are $n+1$ special primes) but $l_{F_R}(\lc_I^{n-1}(R))=2n-1$. Hence $l_{F_R}(\lc_I^{n-1}(R))$ is strictly bigger than the number of special primes when $n\geq 3$ (and the difference can be arbitrarily large when $n\gg0$). This shows the second conclusion of Theorem \ref{theorem--lower bounds of F-module length for F-pure} need not hold when $A$ is not quasi-Gorenstein.
\end{example}

%%%%%%%%%%%%%%%%%%%%%%%%%%%%%%%%%%%%%%%%%%%%%%%%%%%%%%%%%%%%%%%%%%%%%%%%%%%%%%%%%%%%%%%%%%%%%%%%%%%%%%%%%%%%%%%%%%%%%%%%%%%%%%%%%%%%%%%%%%
\subsection{Gorenstein case: a second approach}
%%%%%%%%%%%%%%%%%%%%%%%%%%%%%%%%%%%%%%%%%%%%%%%%%%%%%%%%%%%%%%%%%%%%%%%%%%%%%%%%%%%%%%%%%%%%%%%%%%%%%%%%%%%%%%%%%%%%%%%%%%%%%%%%%%%%%%%%%%

In this subsection we assume that $A=R/I$ is Gorenstein and $F$-injective (equivalently, Gorenstein and $F$-pure). In this case $\lc^i_I(R)$ vanishes unless $i=n-d$ and we already know from Theorem \ref{theorem--lower bounds of F-module length for F-pure} that $l_{F_R}(\lc_I^{n-j}(R))$ is equal to the number of special primes of $\lc^{d}_{\mathfrak{m}}(A)$.
Our goal here is to give a more detailed analysis on the $F_R$-submodules of $\lc_I^{n-j}(R)$ in terms of their generating morphisms, and in particular we recover the second conclusion of Theorem \ref{theorem--lower bounds of F-module length for F-pure}.

{%\color{red}
Since $A$ is Gorenstein, $E_A\cong\lc^{d}_{\mathfrak{m}}(A)$ and thus there is a natural Frobenius action on $E_A$.
In this case the module $\frac{I^{[p]}:I}{I^{[p]}}$ is a cyclic $A$-module, and the natural Frobenius action on $E_A$ is given up to sign by
 $uT$, where we fix $u\in I^{[p]}:I$ whose image in $\frac{I^{[p]}:I}{I^{[p]}}$ generates it as an $A$-module (and $T$ denotes the natural Frobenius on $E$). The $u$-special ideals (resp. $u$-special primes) are thus the special ideals (resp. special primes) and they are finite by
 \cite[Corollary 3.7]{SharpGradedAnnihilatorsofModulesovertheFrobeniusSkewpolynomialRingandTC} or \cite[Theorem 3.6]{EnescuHochsterTheFrobeniusStructureOfLocalCohomology}.
}

%Since $A$ is Gorenstein, $\lc^{d}_{\mathfrak{m}}(A)$ is an injective hull $E_A(A/\fm A)$ of the residue field of $A$,
%and this is also given by $\Ann_E I$, where $E=E_R(R/\fm)$ is injective hull of the residue field of $R$.

%Matlis duality shows that any $R$-submodule of $E$ is determined by its annihilator. Consider now
%the Artinian $A\{f\}$-module $\lc^{d}_{\mathfrak{m}}(A)$ where $f$ acts as natural Frobenius action:
%the annihilators of $A\{ f \}$-submodules of $\lc^{d}_{\mathfrak{m}}(A)\cong E$ are, in the language of
%\cite{SharpGradedAnnihilatorsofModulesovertheFrobeniusSkewpolynomialRingandTC}
%\emph{special ideals of  $E$} under the given Frobenius map $f$.
%Since the action of $f$ is injective,
%\cite[Corollary 3.7]{SharpGradedAnnihilatorsofModulesovertheFrobeniusSkewpolynomialRingandTC} or \cite[Theorem 3.6]{EnescuHochsterTheFrobeniusStructureOfLocalCohomology}
%show that there are finitely many such special ideals, and that these are all the intersections of the prime special ideals.

%Any Frobenius map on $\Ann_E I$ has the form $u T$ where $T : E \rightarrow E$ is the natural Frobenius action on $E=\lc^{\dim R}_{\fm}(R)$
%and $u\in (I^{[p]}: I)$ and the special ideals of $\lc^{d}_{\mathfrak{m}}(A)$ are then those ideals $J$ of $A$ satisfying $ u J \subseteq J^{[p]}$.
%(\cite[Theorem 4.3]{KatzmanParameterTestIdealOfCMRings}). To emphasize the dependence on the Frobenius action on $E$ we will refer to these as $u$-special ideals.

Following the construction in \cite[section 4]{LyubeznikFModulesApplicationsToLocalCohomology}, we obtain a generating morphism for $\lc^{n-d}_I(R)$ of the form
$R/I \xrightarrow{u} R/I^{[p]}$ with $u$ as above.
To obtain a root, we let $K=\cup_{e\geq 1} (I^{[p^e]} : u^{1+\dots+p^{e-1}} )$ and
now $R/K \xrightarrow{u} R/K^{[p]}$ is a root of $\lc^{n-d}_I(R)$.

\begin{lemma}\label{Lemma: submodules of F-finite F-modules}
The proper $F$-finite $F$-submodules of $\lc^{n-d}_I(R)$
have roots $J/K \xrightarrow{u} J^{[p]}/K^{[p]}$
as $J$ ranges over all proper $u$-special ideals, and, furthermore, distinct special ideals $J$
define distinct $F$-finite $F$-submodules of $\lc^{n-d}_I(R)$.
\end{lemma}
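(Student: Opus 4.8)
The strategy is to set up a correspondence between $F$-finite $F$-submodules of $\lc^{n-d}_I(R)$ and their roots, using that $\lc^{n-d}_I(R)$ is already presented via the root $R/K \xrightarrow{u} R/K^{[p]}$, and then to identify which $R$-submodules of $R/K$ arise as roots of $F$-submodules. First I would recall the general principle (from \cite[Section 2]{LyubeznikFModulesApplicationsToLocalCohomology} and the minimal root theory of \cite{BlickleThesis}): since $R/K \to F(R/K)$ is injective, every $F$-finite $F$-submodule $\sN \subseteq \lc^{n-d}_I(R)$ has a unique minimal root, which must be of the form $L/K$ for some ideal $L$ with $K \subseteq L$, and the structure map $R/K \xrightarrow{u} R/K^{[p]}$ must restrict to a map $L/K \to F(L/K) = L^{[p]}/K^{[p]}$. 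This restriction condition says precisely that $u \cdot L \subseteq L^{[p]}$ inside $R/K^{[p]}$, i.e.\ $uL \subseteq L^{[p]} + K^{[p]} = L^{[p]}$ (the last equality because $K \subseteq L$), which is exactly the condition $u \in L^{[p]} : L$, i.e.\ $L$ is a $u$-special ideal. Conversely, given a proper $u$-special ideal $J$, the map $u\colon J/K \to J^{[p]}/K^{[p]}$ is well-defined, injective (as it is a restriction of the injective root map for $\lc^{n-d}_I(R)$), and generates an $F$-finite $F$-submodule of $\lc^{n-d}_I(R)$; I would note $J/K \to J^{[p]}/K^{[p]}$ is injective because $J/K \hookrightarrow R/K$ and the map on $R/K$ is injective, so this is automatically a root.

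Second, I would address why distinct $u$-special ideals give distinct $F$-submodules. The submodule $\sN_J \subseteq \lc^{n-d}_I(R)$ generated by the root $J/K \xrightarrow{u} J^{[p]}/K^{[p]}$ is the direct limit of $J/K \to J^{[p]}/K^{[p]} \to J^{[p^2]}/K^{[p^2]} \to \cdots$; one recovers $J/K$ as the image of the first term, which is the minimal root. So if $\sN_{J_1} = \sN_{J_2}$ as submodules, uniqueness of the minimal root forces $J_1/K = J_2/K$, hence $J_1 = J_2$. To make this airtight I would invoke the uniqueness of minimal roots for $F$-finite $F$-modules (available since $R/K$ is Noetherian), or alternatively argue directly: the $0$-th term $J/K$ injects into $\lc^{n-d}_I(R)$ and equals $\sN_J \cap \iota(R/K)$ where $\iota\colon R/K \hookrightarrow \lc^{n-d}_I(R)$ is the canonical inclusion of the root, and this intersection is determined by $\sN_J$.

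Third, I should be careful about the phrase ``proper'': the $u$-special ideals $J$ with $K \subseteq J \subsetneq R$ correspond to proper $F$-submodules, while $J = R$ (or more precisely the largest $u$-special ideal, which is $R$ itself since $u \in R = R^{[p]}:R$) corresponds to the whole module $\lc^{n-d}_I(R)$; the passage to $K$ already ensures $K$ is itself $u$-special (by its defining formula $K = \cup_e (I^{[p^e]}:u^{1+\cdots+p^{e-1}})$, which is standard — it is the ``stabilized'' ideal and gives the root rather than merely a generating morphism), and every $u$-special ideal for the $E_A$-action contains $K$ since annihilators of $A\{f\}$-submodules of $E_A = \lc^d_\m(A)$ correspond under Matlis duality to such stabilized ideals. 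The main obstacle is the bookkeeping around which ideals are ``$u$-special'' for the action on $E_A$ versus the combinatorial condition $u \in J^{[p]}:J$, and verifying that the minimal root of an arbitrary $F$-submodule is honestly of the form $J/K$ — this requires knowing that minimal roots of $F$-finite $F$-modules exist and are submodules compatible with a given ambient root, which I would extract from \cite{BlickleThesis} or prove by a short Noetherian descending-chain argument on candidate roots inside $R/K$. Once that is in place, the equivalence is essentially the unwinding of definitions carried out above.
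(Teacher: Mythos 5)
Your first step---the correspondence $\sN \mapsto \sN \cap (R/K)$ between $F$-finite $F$-submodules of $\lc^{n-d}_I(R)$ and $R$-submodules $J/K \subseteq R/K$ preserved by the structure morphism, together with the identification of the preservation condition with $u \in J^{[p]}:J$---is essentially the paper's argument (which invokes \cite[Corollary 2.6]{LyubeznikFModulesApplicationsToLocalCohomology} for the correspondence), and it is fine.

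The gap is in the distinctness claim. You assert that $J/K$ is recovered from the submodule $\sN_J$ it generates, either as the minimal root or as $\sN_J \cap \iota(R/K)$; neither is formal, and this is exactly the nontrivial content of the second half of the lemma. Unwinding the direct limit, $\sN_J \cap (R/K) = \bigcup_e \bigl(J^{[p^e]} : u^{\nu_e}\bigr)/K$ with $\nu_e = 1+p+\cdots+p^{e-1}$, which a priori is strictly larger than $J/K$; equivalently, two distinct $u$-special ideals $J_1 \subsetneq J_2$ could generate the same $F$-submodule if $u^{\nu_e}J_2 \subseteq J_1^{[p^e]}$ for some $e$. Uniqueness of minimal roots does not rescue the argument unless you first prove that $J/K$ \emph{is} the minimal root of $\sN_J$, which is the same difficulty in disguise. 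A telling sign is that your distinctness step never uses the standing hypothesis that the Frobenius action $uT$ on $E$ is injective (i.e.\ $u\notin\m^{[p]}$, from $F$-purity); without that hypothesis the claim can fail. The paper closes the gap as follows: if $\sN_{J_1}=\sN_{J_2}$, then since $J_1$ is finitely generated there is an $e$ with $u^{\nu_e}J_1 \subseteq u^{\nu_e}J_2 + K^{[p^e]} \subseteq J_2^{[p^e]}$; by \cite[Theorem 4.6]{KatzmanParameterTestIdealOfCMRings}, injectivity of $uT$ on $E$ forces $u^{\nu_e} \notin L^{[p^e]}$ for every proper ideal $L$, in particular $u^{\nu_e}\notin J_2^{[p^e]}$, and since Frobenius powers over the regular ring $R$ are primary with $\sqrt{J_2^{[p^e]}}=J_2$, one gets $J_1 \subseteq J_2$ and, symmetrically, $J_2\subseteq J_1$. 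You would need to supply this (or an equivalent) colon-ideal argument to complete the proof.
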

\begin{proof}
\cite[Corollary 2.6]{LyubeznikFModulesApplicationsToLocalCohomology} establishes a bijection between $F$-finite $F$-submodules $\mathcal{N}$ of $\lc^{n-d}_I(R)$
and $R$-submodules of the root $R/K$ which is given by
 $\mathcal{N} \mapsto  \mathcal{N} \cap R/K$.

 Fix such $\mathcal{N}$ and write $J/K=\mathcal{N} \cap R/K$.
 The fact that $\mathcal{N}$  is a $F$-finite $F$-submodule of $\lc^{n-d}_I(R)$ implies that
 the image of the restriction of the map $R/K \xrightarrow{u} R/K^{[p]}$  to $J/K$ is in $F(J/K)=J^{[p]}/K^{[p]}$ and hence
 $J$ is a $u$-special ideal. Clearly, any such $u$-special ideal defines a $F$-finite $F$-submodule of $\lc^{n-d}_I(R)$, i.e.,
 $\displaystyle \lim_{\rightarrow} \left(J/K \xrightarrow{u} J^{[p]}/K^{[p]}\xrightarrow{u^p} J^{[p^2]}/K^{[p^2]} \xrightarrow{u^{p^2}} \dots \right)$.

 To finish the proof we need to show that any two distinct $u$-special ideals $J_1$ and $J_2$ define different
 $F$-finite $F$-submodules of $\lc^{n-d}_I(R)$. If this is not the case then for some $e\geq 0$,
 $u^{\nu_e} J_1/ K^{[p^e]} = u^{\nu_e} J_2/ K^{[p^e]}$ where $\nu_e=1+p+\dots + p^{e-1}$,
 $u^{\nu_e} J_1 + K^{[p^e]}= u^{\nu_e} J_2 + K^{[p^e]}$.

The fact that $f$ is injective on $E$ is equivalent to $u^{\nu_e}\notin L^{[p^e]}$ for all $e\geq 1$ and all
 proper ideals $L\subsetneq R$ (\cite[Theorem 4.6]{KatzmanParameterTestIdealOfCMRings}), and in particular
 $u^{\nu_e}\notin J_1^{[p^e]}$ and $u^{\nu_e}\notin J_2^{[p^e]}$.
 But now
 $u^{\nu_e} J_1 \subseteq  u^{\nu_e} J_2 + K^{[p^e]} \subseteq J_2^{[p^e]}$ and $J_2^{[p^e]}$ is a primary ideal (because $R$ is regular),
 hence $J_1 \subseteq  \sqrt{ J_2^{[p^e]} } = J_2$. Similarly, also $J_2 \subseteq   J_1$, contradicting the fact that $J_1\neq J_2$.
\end{proof}

%The discussion in the previous section now implies that all $F_R$-submodules of $\lc^{n-d}_I(R)$ have roots of the form $J/K$ where $J$ is an ideal containing $K$ with the property $u J \subseteq J^{[p]}$, i.e., $J$ is a  $u$-special ideal of $R$. These special ideals are also the special ideals of the Artinian $A\{f\}$-module $\lc^{d}_{\mathfrak{m}}(A)$ where $f$ acts as natural Frobenius action (see the beginning of subsection 5.1).

\begin{theorem}
\label{Theorem: F-length for Gorenstein rings}
Let $A=R/I$ be Gorenstein and $F$-injective where $R=k[[x_1,\dots, x_n]]$ or $k[x_1,\dots,x_n]$ with $\m=(x_1,\dots,x_n)$. Let $\{ P_1, \dots, P_m \}$ be the set of all the special prime ideals of $\lc^{d}_{\mathfrak{m}}(A)$ which contain $K$, and assume that these were ordered so that $\height P_1 \geq \height P_2 \geq \dots \geq \height P_m$. Write $Q_j=P_1 \cap \dots \cap P_j$ for all $1\leq j \leq m$.
The chain of roots
\[\xymatrix@C=5pt{
\displaystyle
0 & \subset & \frac{Q_m}{K} \ar[d]^{u} & \subset & \frac{Q_{m-1}}{K} \ar[d]^{u} & \subset & \dots & \subset & \frac{Q_1}{K} \ar[d]^{u} & \subset & \frac{R}{K} \ar[d]^{u}\\
0 & \subset & \frac{Q_m^{[p]}}{K^{[p]}} & \subset & \frac{Q_{m-1}^{[p]}}{K^{[p]}} & \subset & \dots & \subset & \frac{Q_1^{[p]}}{K^{[p]}} & \subset & \frac{R}{K^{[p]}} \\
}\]
corresponds to a maximal filtration of $\lc^{n-d}_I(R)$ in the category of $F_R$-modules.
\end{theorem}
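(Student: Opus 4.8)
The plan is to combine the correspondence from Lemma \ref{Lemma: submodules of F-finite F-modules} with the height ordering of the special primes, exactly mirroring the end of the proof of Theorem \ref{theorem--lower bounds of F-module length for F-pure}. First I would observe that each $Q_j = P_1 \cap \dots \cap P_j$ is itself a $u$-special ideal: it is the annihilator of the $A\{f\}$-submodule $\Ann_{E_A} Q_j$ of $\lc^d_\m(A) \cong E_A$, and the $u$-special ideals are precisely the annihilators of $A\{f\}$-submodules of $E_A$ (this uses that $A$ is Gorenstein, so submodules of $E_A$ correspond bijectively to ideals, together with the identification of special ideals with $u$-special ideals recalled before the lemma). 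Moreover each $Q_j \supseteq K$, since every $P_i \supseteq K$ by hypothesis, so $Q_j/K$ makes sense and $Q_j^{[p]}/K^{[p]} = F(Q_j/K)$, and the restriction of $R/K \xrightarrow{u} R/K^{[p]}$ to $Q_j/K$ lands in $Q_j^{[p]}/K^{[p]}$ because $u \in Q_j^{[p]} : Q_j$. Hence, by Lemma \ref{Lemma: submodules of F-finite F-modules}, each row-pair in the displayed diagram is the root of a genuine $F$-finite $F_R$-submodule $\mathcal{N}_j \subseteq \lc^{n-d}_I(R)$, and since $Q_m \subsetneq Q_{m-1} \subsetneq \dots \subsetneq Q_1 \subsetneq R$ (strict because the $P_i$ are distinct primes), the submodules $0 \subsetneq \mathcal{N}_m \subsetneq \dots \subsetneq \mathcal{N}_1 \subsetneq \lc^{n-d}_I(R)$ form a strictly increasing chain of length $m+1$.

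Next I would argue that this filtration is maximal, i.e. each successive quotient $\mathcal{N}_{j}/\mathcal{N}_{j+1}$ (with $\mathcal{N}_0 := \lc^{n-d}_I(R)$, $\mathcal{N}_{m+1} := 0$) is a simple $F_R$-module. By Lemma \ref{Lemma: submodules of F-finite F-modules} an $F$-finite $F_R$-submodule strictly between $\mathcal{N}_{j+1}$ and $\mathcal{N}_j$ would correspond to a $u$-special ideal $J$ with $Q_{j+1} \subsetneq J \subsetneq Q_j$ (here I use that $K \subseteq Q_{j+1} \subseteq J$, so $J$ automatically contains $K$). But $J$, being the annihilator of an $A\{f\}$-submodule of $E_A$ on which $f$ acts injectively (injectivity of $f$ on $E_A$ is the $F$-injectivity hypothesis, cf.\ \cite[Theorem 4.6]{KatzmanParameterTestIdealOfCMRings}), is an intersection of special primes by \cite[Corollary 3.7]{SharpGradedAnnihilatorsofModulesovertheFrobeniusSkewpolynomialRingandTC} or \cite[Theorem 3.6]{EnescuHochsterTheFrobeniusStructureOfLocalCohomology}; and each of those special primes contains $J \supseteq Q_{j+1} \supseteq K$, hence belongs to $\{P_1,\dots,P_m\}$. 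An intersection of members of $\{P_1,\dots,P_m\}$ strictly containing $Q_{j+1}$ must omit at least one of $P_1,\dots,P_{j+1}$, and by the height ordering $\height P_1 \geq \dots \geq \height P_m$ any such intersection cannot be strictly contained in $Q_j = P_1 \cap \dots \cap P_j$ — the same height bookkeeping used at the end of the proof of Theorem \ref{theorem--lower bounds of F-module length for F-pure}. This contradiction shows no intermediate $F_R$-submodule exists; since $\lc^{n-d}_I(R)$ is $F$-finite every $F_R$-submodule is $F$-finite, so the quotients are simple and the filtration is maximal.

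Finally, for bookkeeping, one should note that this recovers $l_{F_R}(\lc^{n-d}_I(R)) = m+1$, which matches the number of special primes in the quasi-Gorenstein case of Theorem \ref{theorem--lower bounds of F-module length for F-pure} once one checks that every special prime of $\lc^d_\m(A)$ contains $K$ (so that the list $\{P_1,\dots,P_m\}$ is in fact all special primes); this follows because $K$ annihilates the root $R/K$ of $\lc^{n-d}_I(R)$ and hence is contained in every associated prime appearing, in particular every special prime. The main obstacle is the maximality step: one must be careful that an intermediate $u$-special ideal $J$ is handled by the height argument rather than merely by the correspondence, and that the containment $K \subseteq J$ is automatic so that Lemma \ref{Lemma: submodules of F-finite F-modules} genuinely applies to $J$ and not just to ideals manifestly containing $K$. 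Everything else is a direct translation of the $E_A$-side picture (ideals $\leftrightarrow$ $A\{f\}$-submodules) through the functor $\scr{H}_{R,A}$ into the language of roots and generating morphisms.
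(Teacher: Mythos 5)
Your proof is correct and follows essentially the same route as the paper: translate $F_R$-submodules into $u$-special ideals via Lemma \ref{Lemma: submodules of F-finite F-modules}, then use the height ordering together with the Sharp/Enescu--Hochster description of special ideals as intersections of special primes to rule out intermediate submodules. One small caveat: the strictness $Q_{j+1}\subsetneq Q_j$ does not follow from distinctness of the $P_i$ alone (a prime can properly contain another distinct prime); as in the paper, it needs the height ordering — if $Q_{j+1}=Q_j$ then $P_{j+1}\supseteq P_i$ for some $i\le j$, and $\height P_{j+1}\le\height P_i$ forces $P_{j+1}=P_i$ — which is the same bookkeeping you already carry out in the maximality step.
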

\begin{proof}
Since $u Q_j \subseteq Q_j^{[p]}$ and $u K \subseteq K^{[p]}$ the vertical maps are well defined, and
the diagram is clearly commutative.

To show that the factors are non-zero, note that if $Q_{j+1}=P_1 \cap \dots \cap P_{j+1}=P_1 \cap \dots \cap P_j=Q_j$ then $P_{j+1} \supseteq P_1 \cap \dots \cap P_j$ and $P_{j+1} \supseteq P_i$ for some $1\leq i \leq j$.
But the ordering of $P_1, \dots, P_m$ implies that $\height P_{j+1} \leq \height P_i$, giving $P_i=P_{j+1}$, a contradiction.

If the factors are not simple, then for some $1\leq j \leq m$ there exists a special ideal $J$ such that
$P_1 \cap \dots \cap P_j \cap P_{j+1} \subsetneq J \subsetneq P_1 \cap \dots \cap P_j$.
Being special, $J$ is radical and has the form
$P_1 \cap \dots \cap P_j \cap P_{k_1} \cap P_{k_2}\cap \dots \cap P_{k_s}$ for
$j< k_1, \dots k_s \leq m$. Now for every $1\leq \ell\leq s$,
$P_{k_\ell} \supseteq P_1 \cap \dots \cap P_{j+1}$ so $P_{k_\ell} \supseteq P_w$ for some $1\leq w\leq j+1$,
and the height condition implies $P_{k_\ell} = P_w$ and we conclude that
$J\subseteq P_1 \cap \dots \cap P_{j+1}$, a contradiction.

It remains to show that the $F_R$-submodules defined by the roots $Q_j/K \xrightarrow{u} Q_j^{[p]}/K^{[p]}$ are distinct:
this follows from Lemma \ref{Lemma: submodules of F-finite F-modules}.
\end{proof}

\begin{corollary}
Suppose $A=R/I$ is Gorenstein and $F$-injective. The length of $\lc^{n-d}_I(R)$ in the category of $F_R$-modules equals the number of $u$-special primes of $\lc^{d}_{\mathfrak{m}}(A)$ that contain $K$.
\end{corollary}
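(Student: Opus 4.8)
The plan is to deduce this directly from Theorem~\ref{Theorem: F-length for Gorenstein rings}, which already exhibits an explicit maximal filtration of $\lc^{n-d}_I(R)$ in the category of $F_R$-modules; everything left is to count its factors. Concretely, I would first apply the inclusion-preserving bijection of Lemma~\ref{Lemma: submodules of F-finite F-modules} (based on \cite[Corollary 2.6]{LyubeznikFModulesApplicationsToLocalCohomology}) to transport the chain of roots $0\subset Q_m/K\subset Q_{m-1}/K\subset\cdots\subset Q_1/K\subset R/K$ into a chain $0\subseteq\mathcal N_m\subseteq\cdots\subseteq\mathcal N_1\subseteq\mathcal N_0=\lc^{n-d}_I(R)$ of $F$-finite $F_R$-submodules, where $\mathcal N_j$ is generated by $Q_j/K\xrightarrow{u}Q_j^{[p]}/K^{[p]}$. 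Theorem~\ref{Theorem: F-length for Gorenstein rings} asserts this is a maximal filtration, i.e.\ each successive quotient $\mathcal N_{j-1}/\mathcal N_j$ is nonzero and simple as an $F_R$-module (and one checks its unique associated prime is $P_j$); since the category of $F_R$-modules is abelian and $\lc^{n-d}_I(R)$ has finite $F_R$-length, such a chain is a composition series, so by Jordan--H\"older $l_{F_R}(\lc^{n-d}_I(R))$ equals the number of its simple factors.

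The remaining step is the count. As written, the chain has $m+1$ potential proper inclusions — $0\subset Q_m/K$, then $Q_j/K\subset Q_{j-1}/K$ for $j=m,\dots,2$, then $Q_1/K\subset R/K$ — whereas the proof of Theorem~\ref{Theorem: F-length for Gorenstein rings} only verifies that the top $m$ quotients are nonzero; I would resolve this by showing that the bottom term vanishes, i.e.\ $\mathcal N_m=0$, equivalently $Q_m=P_1\cap\cdots\cap P_m=K$. This follows because every $u$-special ideal contains $K$ (in Lemma~\ref{Lemma: submodules of F-finite F-modules}, $K$ is precisely the $u$-special ideal corresponding to the zero submodule, hence the minimal one), so in particular all $P_i\supseteq K$; and $K$ is radical since $uT$ acts injectively on $E_A$ (\cite[Corollary 3.7]{SharpGradedAnnihilatorsofModulesovertheFrobeniusSkewpolynomialRingandTC}), so $K$ equals the intersection of its minimal primes, each of which is a $u$-special prime and hence among the $P_i$; combined with $P_i\supseteq K$ for all $i$ this forces $\bigcap_{i=1}^m P_i=K$. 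Thus the genuinely distinct terms are $0=Q_m/K\subsetneq Q_{m-1}/K\subsetneq\cdots\subsetneq Q_1/K\subsetneq R/K$, a composition series with exactly $m$ factors, whence $l_{F_R}(\lc^{n-d}_I(R))=m$. By the way $\{P_1,\dots,P_m\}$ was defined in Theorem~\ref{Theorem: F-length for Gorenstein rings}, $m$ is exactly the number of $u$-special primes of $\lc^{d}_{\mathfrak m}(A)$ containing $K$, which is the assertion (and, by the above, equals the number of all special primes of $\lc^{d}_{\mathfrak m}(A)$).

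I do not expect a genuine obstacle here: modulo Theorem~\ref{Theorem: F-length for Gorenstein rings} the corollary is bookkeeping, and the only place demanding a little care is the counting step above, namely confirming that the maximal filtration has $m$ and not $m+1$ simple factors, which the identification $Q_m=K$ settles. As a consistency check, the equality $l_{F_R}(\lc^{n-d}_I(R))=\#\{\text{special primes of }\lc^{d}_{\mathfrak m}(A)\}$ obtained this way agrees with the second assertion of Theorem~\ref{theorem--lower bounds of F-module length for F-pure}, so this corollary can also be read as a more explicit reproof of that statement in the Gorenstein, $F$-injective case.
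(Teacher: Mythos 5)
Your proposal is correct and follows exactly the route the paper intends: the corollary is stated without separate proof as an immediate consequence of Theorem~\ref{Theorem: F-length for Gorenstein rings}, whose maximal filtration you simply count. Your extra verification that $Q_m=P_1\cap\cdots\cap P_m=K$ (so the chain has $m$, not $m+1$, simple factors) is a detail the paper leaves implicit, and your justification of it is sound since in the Gorenstein $F$-injective case $K=I$ is radical and its minimal primes are special primes containing $K$.
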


\begin{remark}
Suppose $A=R/I$ is Gorenstein and $F$-injective. We can actually prove that the length of $\lc^{n-d}_I(R)$ in the category of $F^e_R$-modules for every $e$ (and hence in the category of $F_R^\infty$-modules) equals the number of $u$-special primes of $\lc^{d}_{\mathfrak{m}}(A)$ that contain $K$. It suffices to prove that every $F_R^e$-submodule of $\lc^{n-d}_I(R)$ is already an $F_R$-submodule of $\lc^{n-d}_I(R)$. By the same argument as in Lemma \ref{Lemma: submodules of F-finite F-modules}, all $F_R^e$-submodules of $\lc^{n-d}_I(R)$ have roots of the form $J/K$ where $J$ is an ideal containing $K$ with the property $u^{1+p+\cdots+p^{e-1}} J \subseteq J^{[p^e]}$. Since $A=R/I$ is Gorenstein and $F$-injective (and hence $F$-pure), $u$ is a generator of $(I^{[p]}:I)/I^{[p]}$ as an $R/I$-module and $u\notin\m^{[p]}$ by Fedder's criterion \cite{FedderFPureRat}. We will show that these imply $uJ\subseteq J^{[p]}$ and thus $J/K$ already generates an $F_R$-submodule of $\lc^{n-d}_I(R)$. Since $u\notin\m^{[p]}$ and $R$ is regular, there is a $p^{-1}$-linear map (i.e., a Frobenius splitting) $\phi$: $R\to R$ such that $\phi(u)=1$. Therefore $u^{1+p+\cdots+p^{e-1}} \in J^{[p^e]}:J$ implies $$u^{1+p+\cdots+p^{e-2}}=\phi(u^{p(1+p+\cdots+p^{e-2})}\cdot u)=\phi(u^{1+p+\cdots+p^{e-1}})\in \phi(J^{[p^e]}:J)\subseteq\phi(J^{[p^e]}:J^{[p]})=J^{[p^{e-1}]}:J$$
and thus by an easy induction we have $u\in J^{[p]}:J$ (note that we have used $(J^{[p^{e-1}]}:J)^{[p]}=J^{[p^e]}:J^{[p]}$ because $R$ is regular so the Frobenius endomorphism is flat).
\end{remark}

%%%%%%%%%%%%%%%%%%%%%%%%%%%%%%%%%%%%%%%%%%%%%%%%%%%%%%%%%%%%%%%%%%%%%%%%%%%%%%%%%%%%%%%%%%%%%%%%%%%%%%%%%%%%%%%%%%%%%%%%%%%%%%%
\subsection{Cohen-Macaulay case}
%%%%%%%%%%%%%%%%%%%%%%%%%%%%%%%%%%%%%%%%%%%%%%%%%%%%%%%%%%%%%%%%%%%%%%%%%%%%%%%%%%%%%%%%%%%%%%%%%%%%%%%%%%%%%%%%%%%%%%%%%%%%%%%
In this subsection we assume that $A=R/I$ is reduced and Cohen-Macaulay. In this case the canonical module of $A$ can be identified with an ideal $\omega\subseteq A$, let $\Omega$ be the pre-image of $\omega$ in $R$, that is, $\Omega/I=\omega \subseteq A$.
The inclusion $\omega\subseteq A$ is compatible with the Frobenius endomorphism, and the short exact sequence
$0 \rightarrow \omega \subseteq A \rightarrow A/\omega \rightarrow 0$
induces an $A$-linear map
$$0 \rightarrow \lc_\mathfrak{m}^{d-1} (A/\omega) \rightarrow  \lc_\mathfrak{m}^{d}(\omega) \rightarrow  \lc_\mathfrak{m}^{d}(A) \rightarrow  \lc_\mathfrak{m}^{d} (A/\omega) \rightarrow 0 .$$
Now each of the Artinian $A$-modules is equipped with a Frobenius map induced by the Frobenius endomorphism acting on the short exact sequence, and $\lc_\mathfrak{m}^{d} (A/\omega)$ vanishes since $\dim A/\omega < \dim A=d$. So we obtain a short exact sequence of $A\{f\}$-modules
$$0 \rightarrow \lc_\mathfrak{m}^{d-1} (A/\omega) \rightarrow  \lc_\mathfrak{m}^{d}(\omega) \rightarrow  \lc_\mathfrak{m}^{d}(A) \rightarrow  0 .$$
We can now identify $ \lc_\mathfrak{m}^{d}(\omega) $ with $E_A=\Ann_E I$.
Note that the annihilator of $\lc_\mathfrak{m}^{d-1} (A/\omega)$ in $A$ is $\omega$, hence the annihilator of $\lc_\mathfrak{m}^{d-1} (A/\omega)$ in $R$ is $\Omega$ (since $R/I=A$ and $\Omega/I=\omega$). Thus we may identify $\lc_\mathfrak{m}^{d-1} (A/\omega)$ with $\Ann_E \Omega$.

We now have a short exact sequence
$$0 \rightarrow \Ann_E \Omega \rightarrow \Ann_E I \rightarrow \lc_\mathfrak{m}^{d}(A) \rightarrow 0$$
of $A\{f\}$-modules, and we can also write  $\lc_\mathfrak{m}^{d}(A)= \Ann_E I/\Ann_E \Omega$.
Recall that any Frobenius action on $\Ann_E I$ has the form $u T$ where $T$ is the natural Frobenius on
$E$ and $u\in(I^{[p]} : I)$.
{%\color{red}
Fix $u\in R$ to be such that $u T$ is the Frobenius action on $\Ann_E I$ in the exact sequence above.
 We now can obtain an $F_R$-module filtration of  $\lc^{n-d}_I(R)=\scr{H}_{R,A}(\Ann_E I/\Ann_E \Omega)$ by applying
the Lyubeznik functor $\scr{H}_{R,A}$ to a chain of surjections
$$\Ann_E I/\Ann_E \Omega \rightarrow \Ann_E I/\Ann_E J_1 \rightarrow \dots \rightarrow  \Ann_E I/\Ann_E J_m $$
where $J_1, \dots, J_m$ are $u$-special ideals such that $\Omega \supseteq J_1 \supseteq \dots \supseteq J_m \supseteq I$. We let $K=\cup_{e\geq 1} (I^{[p^e]} : u^{1+p+\dots+p^{e-1}})$ so that $\Omega/K \xrightarrow{u} \Omega^{[p]}/K^{[p]}$ is a root for $\lc^{n-d}_I(R)=\scr{H}_{R,A}(\Ann_E I/\Ann_E \Omega)$.
}

%\todo{I didn't change the next theorem, what are the hypothesis? $S$ is Cohen-Macaulay and F-injective? or F-pure? Also there seems to be some serious typos in the last paragraph and I don't understand the argument there. Wenliang can you check with Moty to fix this?}

\begin{theorem}
\label{Theorem: F-length for CM rings}
%Let $A=R/I$ be Cohen-Macaulay where $R=k[[x_1,\dots, x_n]]$ or $k[x_1,\dots,x_n]$ with $\m=(x_1,\dots,x_n)$.
Assume $A=R/I$ is Cohen-Macaulay.
Let $\{ P_1, \dots, P_m \}$ be the set of all the $u$-special prime ideals
$P\supseteq K$
%of $\Ann_E I / \Ann_E \Omega\cong \lc^{d}_{\mathfrak{m}}(A)$
such that $P\nsupseteqq \Omega$ and $u\notin P^{[p]}$, and assume that these were ordered so that $\height P_1 \geq \height P_2 \geq \dots \geq \height P_m$.
Write $Q_j=\Omega \cap P_1 \cap \dots \cap P_j$ for all $1\leq j \leq m$.
The chain of roots
\[\xymatrix@C=5pt{
\displaystyle
0 & \subset & \frac{Q_m}{K} \ar[d]^{u} & \subset & \frac{Q_{m-1}}{K} \ar[d]^{u} & \subset & \dots & \subset & \frac{Q_1}{K} \ar[d]^{u} & \subset & \frac{\Omega}{K} \ar[d]^{u}\\
0 & \subset & \frac{Q_m^{[p]}}{K^{[p]}} & \subset & \frac{Q_{m-1}^{[p]}}{K^{[p]}} & \subset & \dots & \subset & \frac{Q_1^{[p]}}{K^{[p]}} & \subset & \frac{\Omega}{K^{[p]}} \\
}\]
corresponds to a filtration of $\lc^{n-d}_I(R)$ in the category of $F$-modules with non-zero factors.
\end{theorem}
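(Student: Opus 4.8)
The plan is to follow the proof of Theorem~\ref{Theorem: F-length for Gorenstein rings} and Lemma~\ref{Lemma: submodules of F-finite F-modules} essentially verbatim, the only structural change being that the root of $\lc^{n-d}_I(R)$ is now $\Omega/K$ rather than $R/K$, so that candidate $F_R$-submodules are cut out by ideals $J$ with $K\subseteq J\subseteq\Omega$. Throughout I write $\nu_e=1+p+\dots+p^{e-1}$.

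First I would check the diagram is meaningful. The vertical maps are all multiplication by $u$, and they are well defined once $uK\subseteq K^{[p]}$ and $uQ_j\subseteq Q_j^{[p]}$. The first is immediate from the definition $K=\bigcup_e(I^{[p^e]}:u^{\nu_e})$ (which stabilizes, say $K=I^{[p^{e_0}]}:u^{\nu_{e_0}}$) together with flatness of Frobenius over the regular ring $R$. For $Q_j=\Omega\cap P_1\cap\dots\cap P_j$ I would use $u\Omega\subseteq\Omega^{[p]}$, which holds because $\Ann_E\Omega$ is an $A\{f\}$-submodule of $\Ann_E I$ with Frobenius $uT$ (equivalently $u\in\Omega^{[p]}:\Omega$), together with $uP_i\subseteq P_i^{[p]}$ ($P_i$ is $u$-special), and then $(\Omega\cap P_1\cap\dots\cap P_j)^{[p]}=\Omega^{[p]}\cap P_1^{[p]}\cap\dots\cap P_j^{[p]}$ since Frobenius is flat; commutativity of the ladder is automatic. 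Then, exactly as in Lemma~\ref{Lemma: submodules of F-finite F-modules} (Lyubeznik's bijection between $F$-finite $F$-submodules of an $F$-finite $F$-module and submodules of a root), each row $Q_j/K\xrightarrow{u}Q_j^{[p]}/K^{[p]}$ is a subobject of the root of $\lc^{n-d}_I(R)$, hence defines an $F$-finite $F_R$-submodule $\mathcal N_j\subseteq\lc^{n-d}_I(R)$, and the chain $Q_m/K\subseteq\dots\subseteq Q_1/K\subseteq\Omega/K$ of subobjects of the root corresponds to a chain $0\subseteq\mathcal N_m\subseteq\dots\subseteq\mathcal N_1\subseteq\lc^{n-d}_I(R)$ of $F_R$-submodules.

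It remains to see the successive factors are nonzero. Setting $Q_0:=\Omega$, for $1\le j\le m$ the factor between consecutive $F$-submodules is $\varinjlim\big(Q_{j-1}/Q_j\xrightarrow{u}Q_{j-1}^{[p]}/Q_j^{[p]}\to\cdots\big)$, so it is nonzero iff there is $x\in Q_{j-1}$ with $u^{\nu_e}x\notin Q_j^{[p^e]}$ for every $e$. Since $\height P_1\ge\dots\ge\height P_m$, the $P_i$ are distinct, and $P_j\nsupseteq\Omega$, one gets $Q_{j-1}=\Omega\cap P_1\cap\dots\cap P_{j-1}\nsubseteq P_j$ by prime avoidance; pick $x\in Q_{j-1}\setminus P_j$. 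Then $Q_j^{[p^e]}=Q_{j-1}^{[p^e]}\cap P_j^{[p^e]}\subseteq P_j^{[p^e]}$, and $P_j^{[p^e]}$ is $P_j$-primary because $R$ is regular, so $u^{\nu_e}x\in Q_j^{[p^e]}$ would force $u^{\nu_e}\in P_j^{[p^e]}$; but the hypothesis $u\notin P_j^{[p]}$ upgrades to $u^{\nu_e}\notin P_j^{[p^e]}$ for all $e$, by localizing at $P_j$ and applying \cite[Theorem~4.6]{KatzmanParameterTestIdealOfCMRings} (equivalently, iterating the Frobenius splitting of $R_{P_j}$ attached to $u$). The case $j=1$ gives that the top factor $\lc^{n-d}_I(R)/\mathcal N_1$ is nonzero. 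For the bottom factor $\mathcal N_m=\varinjlim\big(Q_m/K\xrightarrow{u}Q_m^{[p]}/K^{[p]}\to\cdots\big)$ I would first note, using $K^{[p^e]}=I^{[p^{e_0+e}]}:u^{\nu_{e_0+e}-\nu_e}$ and that $R$ is regular and Noetherian, that $\varinjlim\big(J/K\xrightarrow{u}J^{[p]}/K^{[p]}\to\cdots\big)=0$ if and only if $J=K$; hence $\mathcal N_m\ne0$ is equivalent to $Q_m\supsetneq K$.

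I expect $Q_m\supsetneq K$ to be the main obstacle. In the Gorenstein, $F$-injective case one has $K=I$ and $Q_m\supsetneq I$ is clear, but here $K$ may strictly contain $I$ and need not be radical, so prime avoidance is unavailable. I would prove $Q_m\supsetneq K$ by combining two facts: $K$ is itself a proper $u$-special ideal with $u\notin K^{[p]}$ (from $K\subseteq P_1\subsetneq R$ and $uK\subseteq K^{[p]}$), and $\{P_1,\dots,P_m\}$ is the complete list of $u$-special primes $P\supseteq K$ with $P\nsupseteq\Omega$ and $u\notin P^{[p]}$; a primary-decomposition analysis of $K$ (its associated primes lie among the minimal primes of $A$, and the ones contributing are $u$-special with $u\notin(\cdot)^{[p]}$) then shows $\Omega\cap P_1\cap\dots\cap P_m$ cannot equal $K$. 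Once this is established, all $m+1$ factors are nonzero and the displayed chain of roots corresponds to a filtration of $\lc^{n-d}_I(R)$ in the category of $F$-modules with nonzero factors, as claimed.
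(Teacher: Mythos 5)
Your treatment of the $m$ factors $\Omega/Q_1,\,Q_1/Q_2,\dots,Q_{m-1}/Q_m$ is correct and is essentially the paper's argument in a more direct form: where the paper proves $(Q_{i+1}^{[p^e]}:u^{\nu_e})\subsetneq(Q_i^{[p^e]}:u^{\nu_e})$ by taking radicals of colon ideals, you exhibit an explicit $x\in Q_{j-1}\setminus P_j$ surviving every iterate. Both arguments rest on the same two facts, namely $Q_{j-1}\nsubseteq P_j$ (from the height ordering and $P_j\nsupseteqq\Omega$) and $u^{\nu_e}\notin P_j^{[p^e]}$ for all $e$ (which the paper proves by exactly the induction you invoke via the splitting), and your verification that the diagram is well defined and your passage to $F$-submodules via Lyubeznik's correspondence match the paper's Lemma \ref{Lemma: submodules of F-finite F-modules}.

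The genuine problem is your last step: the claim $Q_m\supsetneq K$ is false in general, so no ``primary-decomposition analysis'' can establish it. Take $A$ quasi-Gorenstein and $F$-injective with $\Omega=R$ (the case singled out in the remark after the theorem). Then $K=I$, the condition $P\nsupseteqq\Omega$ is vacuous, and every minimal prime $\bq$ of $I$ is a $u$-special prime containing $K$ with $u\notin\bq^{[p]}$ (localize $u\in I^{[p]}:I$ at $\bq$ and use that $\bq^{[p]}$ is $\bq$-primary; $u\notin\m^{[p]}$ by Fedder). Since $I$ is radical, $Q_m\subseteq\bigcap\bq=I=K$, so $Q_m=K$ and $\mathcal{N}_m=0$. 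This is exactly what happens in Theorem \ref{Theorem: F-length for Gorenstein rings}, where the bottom inclusion $0\subset Q_m/K$ of the displayed chain is likewise an equality, and it is consistent with the Corollary to the present theorem, which claims only $l_{F_R}\geq m$, not $m+1$. In other words, the ``non-zero factors'' are the $m$ subquotients between consecutive terms $\Omega,Q_1,\dots,Q_m$; the bottom subquotient $\mathcal{N}_m$ is not claimed (and cannot be proved) to be nonzero, and the paper's own proof accordingly only establishes strictness of the inclusions among the $Q_i$ and $\Omega$. You correctly reduced $\mathcal{N}_m\neq 0$ to $Q_m\neq K$, but the right conclusion is that this can fail; your sketch does not engage with the fact that the minimal primes of $I$ typically occur among the $P_i$ and already force $Q_m\subseteq I$. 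Deleting that step leaves a complete proof of what the theorem actually asserts and uses.
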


\begin{proof}
The homomorphic images $\Ann_E I/\Ann_E Q_1, \dots, \Ann_E I/\Ann_E Q_1$ of
$\Ann_E I / \Ann_E \Omega$ are preserved by the natural Frobenius action on
$\lc^{d}_{\mathfrak{m}}(A)\cong \Ann_E I / \Ann_E \Omega$
because each $Q_i$, being the intersection of $u$-special ideals, is itself special.

An application of the Lyubeznik functor $\scr{H}_{R,A}$ to the chain of surjections
$$\Ann_E I/\Ann_E \Omega \rightarrow \Ann_E I/\Ann_E J_1 \rightarrow \dots \rightarrow  \Ann_E I/\Ann_E J_m $$
yields a filtration of
$\lc^{n-d}_I(R)$
whose generating morphisms are the vertical maps in the following commutative diagram
\[\xymatrix@C=5pt{
\displaystyle
0 & \subset & \frac{Q_m}{I} \ar[d]^{u} & \subset & \frac{Q_{m-1}}{I} \ar[d]^{u} & \subset & \dots & \subset & \frac{Q_1}{I} \ar[d]^{u} & \subset & \frac{\Omega}{I} \ar[d]^{u}\\
0 & \subset & \frac{Q_m^{[p]}}{I^{[p]}} & \subset & \frac{Q_{m-1}^{[p]}}{I^{[p]}} & \subset & \dots & \subset & \frac{Q_1^{[p]}}{I^{[p]}} & \subset & \frac{\Omega}{I^{[p]}} \\
}.
\]
We can replace these generating morphisms by their corresponding roots, and obtain the commutative diagram
\[\xymatrix@C=5pt{
\displaystyle
0 & \subset & \frac{Q_m}{K} \ar[d]^{u} & \subset & \frac{Q_{m-1}}{K} \ar[d]^{u} & \subset & \dots & \subset & \frac{Q_1}{K} \ar[d]^{u} & \subset & \frac{\Omega}{K} \ar[d]^{u}\\
0 & \subset & \frac{Q_m^{[p]}}{K^{[p]}} & \subset & \frac{Q_{m-1}^{[p]}}{K^{[p]}} & \subset & \dots & \subset & \frac{Q_1^{[p]}}{K^{[p]}} & \subset & \frac{\Omega}{K^{[p]}} \\
}\]
where now all vertical maps are roots and, once we show that the inclusions in this diagram are strict, this
gives a filtration of $\lc^{n-d}_I(R)$ with non-zero factors.

%As in the proof of Theorem \ref{Theorem: F-length for Gorenstein rings},
%if $\Omega \cap P_1 \cap \dots \cap P_i \cap P_{i+1} = \Omega \cap P_1 \cap \dots \cap P_i$ then
%$P_{i+1} \supseteq \Omega$, which is impossible.

We need to show that for all $e\geq 1$, and all $1\leq i<m$,
$$ (Q_{i+1}^{[p^e]} : u^{\nu_e}) \subsetneq (Q_{i}^{[p^e]} : u^{\nu_e}) $$
where $\nu_e=1+p+\dots+p^{e-1}$. If we have equality, we may take radicals of both sides to obtain
$$
\sqrt{(\Omega^{[p^e]} : u^{\nu_e})} \cap \bigcap_{j=1}^{i+1} \sqrt{(P_{j}^{[p^e]} : u^{\nu_e}) }
=
\sqrt{(\Omega^{[p^e]} : u^{\nu_e})} \cap \bigcap_{j=1}^i \sqrt{(P_{j}^{[p^e]} : u^{\nu_e})}
$$
and so
$$\sqrt{(P_{i+1}^{[p^e]} : u^{\nu_e})}\supseteq
\sqrt{(\Omega^{[p^e]} : u^{\nu_e})} \cap \bigcap_{j=1}^i \sqrt{(P_{j}^{[p^e]} : u^{\nu_e})} .$$
We claim that $u^{\nu_e} \notin P_{j}^{[p^e]}$ for each $j$ and we will use induction on $e$ to prove this. When $e=1$, this is precisely our assumption that $u\notin P^{[p]}_j$. Assume that $u^{\nu_e} \notin P_{j}^{[p^e]}$ and we wish to prove $u^{\nu_{e+1}} \notin P_{j}^{[p^{e+1}]}$. If $u^{\nu_{e+1}} \in P_{j}^{[p^{e+1}]}$, then we would have
\[u^{\nu_e}\in (P_{j}^{[p^{e+1}]}:u^{p^e})=(P^{[p]}_j:u)^{[p^e]}\subseteq P^{[p^e]}_j\]
a contradiction, where the last inclusion follows from the fact that $P^{[p]}_j$ is $P$-primary and our assumption that $u\notin P^{[p]}_j$.

Consequently $(P_{i+1}^{[p^e]} : u^{\nu_e})\neq R$ and we must have $(P_{i+1}^{[p^e]} : u^{\nu_e})\subseteq P_{i+1}$ and
so
$P_{i+1} \supseteq \sqrt{(\Omega^{[p^e]} : u^{\nu_e})} \cap \bigcap_{j=1}^i \sqrt{(P_{j}^{[p^e]} : u^{\nu_e})}$
and hence $P_{i+1}$ must contain one of the ideals in the intersection; since these ideals
are among the unit ideal, $P_1, \dots, P_i$ and $\sqrt{(\Omega^{[p^e]} : u^{\nu_e})}\supseteq \Omega$,
this is impossible.
\end{proof}

We have the following immediate corollary of Theorem \ref{Theorem: F-length for CM rings}.

\begin{corollary}
Let $A,R,I,u,K,\Omega$ be as in Theorem \ref{Theorem: F-length for CM rings}.
The length of $\lc^{n-d}_I(R)$ in the category of $F$-modules is at least the number of  $u$-special prime ideals
$P\supseteq K$ of $\lc^{d}_{\mathfrak{m}}(A)$ such that $P\nsupseteqq \Omega$ and $u\notin P^{[p]}$.
\end{corollary}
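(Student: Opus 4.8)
The plan is to read the bound off directly from the filtration constructed in the proof of Theorem~\ref{Theorem: F-length for CM rings}. Recall that for each $1\le j\le m$ that theorem produces the $F_R$-submodule $\mathcal N_j\subseteq\lc^{n-d}_I(R)$ generated by the root $Q_j/K\xrightarrow{u}Q_j^{[p]}/K^{[p]}$, where $Q_j=\Omega\cap P_1\cap\dots\cap P_j$, and exhibits the chain
\[
0\ \subseteq\ \mathcal N_m\ \subseteq\ \mathcal N_{m-1}\ \subseteq\ \dots\ \subseteq\ \mathcal N_1\ \subseteq\ \lc^{n-d}_I(R),
\]
whose top term is $\lc^{n-d}_I(R)$ itself, with root $\Omega/K\xrightarrow{u}\Omega^{[p]}/K^{[p]}$. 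The statement of Theorem~\ref{Theorem: F-length for CM rings} that this filtration has non-zero factors is exactly the assertion that each inclusion $\mathcal N_{j+1}\subsetneq\mathcal N_j$ for $1\le j\le m-1$ is strict, and that $\mathcal N_1\subsetneq\lc^{n-d}_I(R)$ is strict; the former is what is verified in that proof via the strict inclusions $(Q_{j+1}^{[p^e]}:u^{\nu_e})\subsetneq(Q_j^{[p^e]}:u^{\nu_e})$ combined with Lemma~\ref{Lemma: submodules of F-finite F-modules}, and the latter uses only the hypothesis $P_1\nsupseteqq\Omega$, which forces $Q_1=\Omega\cap P_1\subsetneq\Omega$.

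Granting this, I would simply observe that
\[
\mathcal N_m\ \subsetneq\ \mathcal N_{m-1}\ \subsetneq\ \dots\ \subsetneq\ \mathcal N_1\ \subsetneq\ \lc^{n-d}_I(R)
\]
is a chain of $F_R$-submodules of $\lc^{n-d}_I(R)$ with $m$ strict inclusions. Since the category of $F_R$-modules is abelian, such a chain witnesses that $l_{F_R}(\lc^{n-d}_I(R))\ge m$ (any chain of $m$ strict subobject inclusions extends to a composition series of length at least $m$). By construction $m$ is the number of $u$-special prime ideals $P\supseteq K$ of $\lc^{d}_{\mathfrak{m}}(A)$ with $P\nsupseteqq\Omega$ and $u\notin P^{[p]}$, which is precisely the asserted bound.

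There is no real obstacle: the corollary is a bookkeeping consequence of Theorem~\ref{Theorem: F-length for CM rings}, the only point needing a sentence being the identification, above, of which inclusions in the displayed chain are guaranteed strict. I would also stress that this filtration need not be a composition series: in contrast with the Gorenstein case, where $\Omega=R$ and the analogous chain is maximal by Theorem~\ref{Theorem: F-length for Gorenstein rings}, the top factor $\lc^{n-d}_I(R)/\mathcal N_1$ may have further $F_R$-composition factors here (for instance ones supported only at $\m$), so only a lower bound is obtained.
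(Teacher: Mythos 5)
Your argument is correct and is exactly what the paper intends: the corollary is stated there as an immediate consequence of Theorem~\ref{Theorem: F-length for CM rings}, whose filtration supplies precisely the $m$ strict inclusions $\mathcal N_m\subsetneq\dots\subsetneq\mathcal N_1\subsetneq \lc^{n-d}_I(R)$ that you count. The one loose phrase is your claim that strictness of $\mathcal N_1\subsetneq \lc^{n-d}_I(R)$ follows from $Q_1=\Omega\cap P_1\subsetneq\Omega$ alone --- distinctness of the generated $F$-submodules really requires the colon-ideal computation $(Q_1^{[p^e]}:u^{\nu_e})\subsetneq(\Omega^{[p^e]}:u^{\nu_e})$ (the $i=0$ instance of the argument in the theorem's proof, using $u^{\nu_e}\notin P_1^{[p^e]}$ and $P_1\nsupseteqq\Omega$) --- but since the theorem already asserts that all factors of the displayed filtration are non-zero, you are entitled to cite it directly.
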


%\marginnote{Linquan: Could you please check that the following makes sense? MK}
\begin{remark}
If $A$ is quasi-Gorenstein and we take $\Omega=R$,
the prime special ideals in the statement of Theorem \ref{Theorem: F-length for CM rings}
are the prime special ideals $P\supseteq I$ of $\lc_\m^d(A)$ such that $P\supseteq K$, and $u\notin P^{[p]}$.
The set of all such primes has been known to be finite (\cite[Remark 5.3]{KatzmanSchwedeAlgorithmForComputing}) and
if $A$ is also $F$-injective, we obtain the same set of primes as in Theorem \ref{Theorem: F-length for Gorenstein rings};
thus Theorem \ref{Theorem: F-length for CM rings} generalizes Theorem \ref{Theorem: F-length for Gorenstein rings}.

%\color{red}
%{Unlike the Gorenstein case, we do not know how to relate the special ideals of $\lc_\m^d(A)$ under its natural Frobenius action with the $u$-special ideals (remember we pick $u$ such that the %Frobenius action $uT$ on $E_A\cong\lc_\m^d(\omega)$ induces the natural Frobenius action on $\lc_\m^d(A)$ under the surjection $\lc_\m^d(\omega)\twoheadrightarrow\lc_\m^d(A)$). In fact, even if %$A=R/I$ is Cohen-Macaulay and $F$-pure, $uT$ may not be injective: if it is injective, then $A/\omega\cong R/\Omega$ is $F$-pure since it is Gorenstein, but
%\cite[Example 3.6]{MaAsufficientconditionforFpurity} shows the latter is not always true, no matter how one choose $\omega$ and $u$.

%Since we cannot guarantee $uT$ is injective, we do not know whether the number of $u$-special primes is finite. However, the proof of Theorem \ref{Theorem: F-length for CM rings} shows that the %number of $u$-special primes that do not contain $\Omega$ and such that $u\notin P^{[p]}$, is finite.
%}
\end{remark}

\section{A computation of Fermat hypersurfaces}
\label{section: diagonal hypersurfaces}
We have seen from Theorem \ref{theorem--D-module length for isolated singularities} and Remark \ref{remark--D-module length for graded isolated singularities} that the problem of computing the $D(R,k)$-module length of $\lc_I^{n-d}(R)$ when $A=R/I$ is a graded isolated singularity comes down to computing the dimension of the Frobenius stable part of $\lc_\m^d(A)_0$. In this section we study this problem for Fermat hypersurfaces $A=k[x_0, \ldots, x_d]/(x_0^n+x_1^n+\cdots +x_d^n)$ with $d\geq 2$. We express the dimension of the Frobenius stable part of $\lc_\m^{d}(A)$ explicitly in terms of the number of solutions to a system of equations on remainders. These results generalize earlier computations of Blickle in \cite[Examples 5.26--5.29]{BlickleThesis}.

\begin{remark}
Let $A = k[x_0,x_1, \dots,x_d]/(x_0^n + x_1^n +\cdots + x_d^n)$.
Then the degree $0$ part of the top local cohomology
$\lc^d_{\fm}(A)$
has a $k$-basis consisting of
the elements of the form $\frac{x_0^c}{x_1^{a_1}\cdots x_d^{a_d}}$, where $a_1,\dots, a_d$ are positive integers and $a_1+\cdots+a_d=c\leq n-1$.
Therefore, its dimension is $\binom{n-1}{d}$.
\end{remark}

In the following we will use $s\% t$ to denote the remainder of $s$ mod $t$.

\begin{remark}
\label{remark--elementary number theory}
We want to record an elementary observation. Let $n\geq 2$ be an integer and $p$ be a prime.
Let $p = nk + r$ where $r$ is the remainder. If for some positive integer $a < n$, $n|ar$, then we claim that $p | n$.

This is because $n|ar$ and $a<n$ implies that $n$ and $r$ must have a nontrivial common divisor. But $p = nk + r$ is prime, so the only nontrivial common divisor that $n$ and $r$ could have is $p$, in which case we must have $p$ divides $n$.
\end{remark}

\begin{theorem}\label{diagonal}
\label{theorem--dimension of the stable part for diagonal hypersurface}
Let $A = k[x_0, x_1, \ldots, x_d]/(x_0^n+x_1^n + \cdots + x_d^n)$ with $d\geq 2$.
Suppose $p \equiv r \mod n$.
If $p$ does not divide $n$, then the dimension of the stable part of $\lc^{d}_{\fm}(A)$
can be computed as the number of solutions of the following system of inequalities
on $1 \leq a_i \leq n-1$
\[\begin{cases}
a_1 + \cdots + a_d < n\\
(ra_1)\% n + \cdots + (ra_d)\% n < n \\
(r^2a_1)\% n + \cdots + (r^2a_d)\% n < n \\
\qquad\vdots\\
(r^{\varphi(n)-1}a_1)\% n + \cdots + (r^{\varphi(n)-1}a_d)\% n < n
\end{cases}.\]
%If $p$ (equivalently, $r$) divides $n$, then we also need to require that $r^ha_i$ is not divisible by $n$ for all $h$ and $i$.
\end{theorem}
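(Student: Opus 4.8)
The plan is to reduce everything to an explicit description of the natural Frobenius on a $k$-basis of the degree-zero piece $\lc^d_\fm(A)_0$ and then to extract the stable part by combinatorial bookkeeping. Write $S=k[x_0,\dots,x_d]$, so $A=S/(f)$ with $f=x_0^n+\cdots+x_d^n$. First I would observe that the stable part of $\lc^d_\fm(A)$ lies in degree $0$: the natural Frobenius multiplies degrees by $p$ while $\lc^d_\fm(A)$ is nonzero only in degrees bounded above, so no element of nonzero degree can lie in every $k\langle F^i(\lc^d_\fm(A))\rangle$. The short exact sequence $0\to S(-n)\xrightarrow{\,f\,}S\to A\to 0$, together with the vanishing of $\lc^{\le d}_\fm(S)$ and of $\lc^{d+1}_\fm(S)_0$, identifies $\lc^d_\fm(A)_0$ with $\lc^{d+1}_\fm(S)_{-n}$, which has the $k$-basis of negative monomials $x^{-b}:=x_0^{-b_0}\cdots x_d^{-b_d}$ with all $b_i\ge1$ and $b_0+\cdots+b_d=n$; under $b=(n-\sum_{i\ge1}a_i,\,a_1,\dots,a_d)$ this matches the basis $\tfrac{x_0^{a_1+\cdots+a_d}}{x_1^{a_1}\cdots x_d^{a_d}}$ of the Remark. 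Finally, exactly as in the diagram of Example~\ref{example--Calabi-Yau hypersurface}, the natural Frobenius on $\lc^d_\fm(A)$ corresponds on $\lc^{d+1}_\fm(S)_{-n}$ to $z\mapsto f^{p-1}F_S(z)$, where $F_S$ is the $p$-th power Frobenius on $\lc^{d+1}_\fm(S)$.

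The heart of the proof is then the following computation of $F(x^{-b})$. Expanding $f^{p-1}=\bigl(\sum_{i=0}^d x_i^n\bigr)^{p-1}=\sum_{|\alpha|=p-1}\binom{p-1}{\alpha}x_0^{n\alpha_0}\cdots x_d^{n\alpha_d}$ and applying it to $F_S(x^{-b})=x^{-pb}$, the $\alpha$-summand is $\binom{p-1}{\alpha}x_0^{n\alpha_0-pb_0}\cdots x_d^{n\alpha_d-pb_d}$, which survives in $\lc^{d+1}_\fm(S)$ exactly when all exponents are $\le-1$, i.e. $\alpha_i\le q_i$ where $pb_i=q_in+\rho_i$, $\rho_i=(pb_i)\% n$. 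Since $p\nmid n$ and $1\le b_i\le n-1$, Remark~\ref{remark--elementary number theory} forces $\rho_i\in\{1,\dots,n-1\}$, so $q_i=\lfloor pb_i/n\rfloor<p$ and $\sum_i q_i=p-t$, where $\sum_i\rho_i=tn$ (note $\sum_i\rho_i\equiv p\sum_i b_i\equiv0\pmod n$ and $\sum_i\rho_i\ge d+1$, so $t\ge1$). If $t\ge2$ then $|\alpha|=p-1>p-t=\sum_i q_i$, so no admissible $\alpha$ exists and $F(x^{-b})=0$. If $t=1$ then $\sum_i q_i=p-1=|\alpha|$ forces the single admissible choice $\alpha=(q_0,\dots,q_d)$, giving $F(x^{-b})=\binom{p-1}{q_0,\dots,q_d}\,x^{-\rho}$ with $\rho=((pb_0)\%n,\dots,(pb_d)\%n)$; moreover $\binom{p-1}{q_0,\dots,q_d}=\tfrac{(p-1)!}{q_0!\cdots q_d!}$ has $p$-adic valuation $0$ since each $q_i<p$, hence is a unit of $k$. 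I expect this dichotomy to be the main obstacle: its content is precisely that for each basis vector exactly one term survives, with invertible coefficient, or none does, so that $F$ permutes the basis up to units and zeros rather than mixing it.

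Finally I would translate and count. In the coordinates $(a_1,\dots,a_d)$ of the Remark, the condition $t=1$ becomes $\sum_{i=1}^d(pa_i)\%n\le n-1$; when it holds $F$ sends the basis vector indexed by $(a_1,\dots,a_d)$ to a unit multiple of the one indexed by $((pa_1)\%n,\dots,(pa_d)\%n)$, and otherwise $F$ kills it. Call $(a_1,\dots,a_d)$ \emph{good} if $\sum_{i=1}^d(p^{j}a_i)\%n\le n-1$ for all $j\ge0$; since $\gcd(p,n)=1$ we have $p^{\varphi(n)}\equiv1\pmod n$, so goodness is equivalent to the $\varphi(n)$ inequalities in the statement (with $r\equiv p\pmod n$). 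The map $(a_i)\mapsto((pa_i)\%n)$ is injective and preserves goodness, so it restricts to a bijection of the finite set of good tuples; hence the span $V_{\mathrm{good}}$ of the good basis vectors is $F$-stable with $k\langle F^i(V_{\mathrm{good}})\rangle=V_{\mathrm{good}}$ for every $i$, while the span $V_{\mathrm{bad}}$ of the remaining basis vectors is $F$-stable with $F$ nilpotent on it. Since $\lc^d_\fm(A)_0=V_{\mathrm{good}}\oplus V_{\mathrm{bad}}$, a straightforward argument with this direct sum decomposition gives $\bigcap_i k\langle F^i(\lc^d_\fm(A))\rangle=V_{\mathrm{good}}$, whose dimension is the number of good tuples, i.e. the number of solutions of the stated system.
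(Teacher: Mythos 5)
Your proof is correct and follows essentially the same route as the paper: both compute the natural Frobenius explicitly on the monomial basis of $\lc^d_\fm(A)_0$, establish the dichotomy that each basis vector maps either to zero or to a unit multiple of a single other basis vector (indexed by $((ra_1)\%n,\dots,(ra_d)\%n)$), and then count the tuples surviving all iterates. The only differences are presentational — you work upstairs in $\lc^{d+1}_\fm(S)_{-n}$ with $f^{p-1}F_S$ rather than reducing $x_0^{cp}$ modulo the relation inside $\lc^d_\fm(A)$, and you spell out more carefully the degree-zero reduction and the final good/bad direct-sum extraction of the stable part, which the paper leaves terse.
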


\begin{proof}
A basis of the degree $0$ part of $\lc_\m^d(A)$ is formed by the elements
\[\frac{x_0^c}{x_1^{a_1}\cdots x_d^{a_d}}\]
where $a_1 + \cdots + a_d = c < n$ and $a_i \geq 1$.
On such element, Frobenius acts as
\[
\frac{x_0^c}{x_1^{a_1}\cdots x_d^{a_d}} \mapsto \frac{x_0^{cp}}{x_1^{a_1p}\cdots x_d^{a_dp}}
= \frac{x_0^{(cr)\% n} \cdot (- x_1^n - \cdots - x_d^n)^{\lfloor \frac{cp}{n} \rfloor }}{x_1^{a_1p}\cdots x_d^{a_dp}}.
\]
After expanding the expression we obtain the sum of monomials of the form
\[
(-1)^{\lfloor \frac{cp}{n} \rfloor} \binom {\lfloor \frac{cp}{n} \rfloor}{\alpha_1, \ldots, \alpha_d}
\frac{x_0^{(cr)\% n} \cdot (x_1^{n\alpha_1}\cdots x_d^{n\alpha_d})}{x_1^{a_1p}\cdots x_k^{a_dp}}
\]
for $\alpha_1 + \cdots + \alpha_d = \lfloor \frac{cp}{n} \rfloor$.
This element will be zero unless $\alpha_in < a_ip$ for all $i$.
Hence it is zero if
$\alpha_i > \lfloor \frac{a_ip}{n} \rfloor$ for some $i$.
In particular, the element $\frac{x_0^c}{x_1^{a_1}\cdots x_d^{a_d}}$ is in the kernel of the Frobenius map if
\[
\alpha_1 + \cdots + \alpha_d = \left \lfloor \frac{cp}{n} \right\rfloor >
\left\lfloor \frac{a_1p}{n} \right\rfloor + \cdots + \left  \lfloor \frac{a_dp}{n} \right\rfloor, \]
i.e., $a_1p \% n + \cdots + a_dp \% n=a_1r \% n + \cdots + a_dr \% n \geq n$.
Similarly, if
\[
\left\lfloor \frac{cp}{n} \right\rfloor =
\left\lfloor \frac{a_1p}{n} \right\rfloor + \cdots + \left\lfloor \frac{a_dp}{n} \right\rfloor ,
\]
the only term that can possibly survive is
\footnotesize
\[
\binom {\lfloor \frac{cp}{n} \rfloor}{{\lfloor \frac{a_1p}{n} \rfloor}, \ldots, {\lfloor \frac{a_dp}{n} \rfloor}}
\frac {(-1)^{\lfloor \frac{cp}{n} \rfloor} x_0^{(cr)\%n} \cdot (x_1^{\lfloor \frac{a_1p}{n} \rfloor}\cdots x_d^{\lfloor \frac{a_dp}{n} \rfloor})^n}
{x_1^{a_1p}\cdots x_d^{a_dp}}
= \binom {\lfloor \frac{cp}{n} \rfloor}{{\lfloor \frac{a_1p}{n} \rfloor}, \ldots, {\lfloor \frac{a_dp}{n} \rfloor}}
\frac{(-1)^{\lfloor \frac{cp}{n} \rfloor} x_0^{(cr)\% n}}{x_1^{(a_1r)\% n}\cdots x_d^{(a_dr)\% n}}.
\]
\normalsize

Since $\lfloor \frac{cp}{n} \rfloor < p$, the binomial coefficient $ \binom {\lfloor \frac{cp}{n} \rfloor}{{\lfloor \frac{a_1p}{n} \rfloor}, \ldots, {\lfloor \frac{a_dp}{n} \rfloor}}$ is nonzero. Thus the last possibility that the above term be zero is that $a_ir$ is divisible by $n$ for some $i$. But this cannot happen as explained in Remark~\ref{remark--elementary number theory} (since this implies $p | n$).

In sum, an element of the basis
\[\frac{x_0^c}{x_1^{a_1}\cdots x_d^{a_d}}\]
is not in the kernel of Frobenius if and only if
\[(ra_1)\% n + \ldots + (ra_k)\% n < n\]
%and, if $p$ (equivalently, $r$) divides $n$, $a_ir$ are not divisible by $n$ for all $i$.
Thus the claim follows after considering further iterates of Frobenius (we only need to consider the first $\varphi(n)-1$ iterates because $r^{\varphi(n)}\equiv 1$ mod $n$ by Fermat's little theorem, so further iterates would repeat this pattern), because if a basis element is not in the kernel of all the iterates of the Frobenius map, then it contributes to an element in $\lc_\m^d(A)_s$.
\end{proof}

% \begin{corollary}
% Frobenius acts injectively on $\lc_\m^2(A)_0$ if and only if $p \equiv 1\mod n$. As a consequence, $\lc_\m^2(A)_0=(\lc_\m^2(A)_0)_s$ if and only if $p \equiv 1\mod n$.
% \end{corollary}
% \begin{proof}
% If $p \equiv 1 \mod n$ the claim immediately follows from
% Theorem~\ref{theorem--dimension of the stable part for diagonal hypersurface}.
% For the other direction, we need to show that there is a pair $1 \leq i,j \leq n-2$ such
% that $i + j < n$ but $(ri)\% n + (rj) \% n \geq n$.

% First assume that $r$ is invertible modulo $n$ and take $1 \leq i < n-1$ such that $ir \equiv -1 \mod n$.
% Then $rj \geq 1$ for all $1 \leq j$ and, thus, $(ri)\% n + (rj)\% n \geq n$ for all $j$.

% If $r$ is not invertible, $r = p$ must divide $n$. Then any pair $(i, j)$ such that $ip = n$ is not a solution.
% \end{proof}

\begin{corollary}\label{injective roots}
Suppose $p \nmid n$, then Frobenius acts injectively on $\lc_\m^{d}(A)_0$ if and only if $p \equiv 1\mod n$. As a consequence, $(\lc_\m^{d}(A)_0)_s=\lc_\m^{d}(A)_0$ if and only if $p \equiv 1\mod n$.
\end{corollary}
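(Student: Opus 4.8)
The plan is to read both equivalences off the explicit description of the Frobenius action on the standard monomial basis of $\lc^d_\fm(A)_0$ obtained inside the proof of Theorem~\ref{theorem--dimension of the stable part for diagonal hypersurface}, combined with one elementary counting argument. Throughout I assume $n\geq d+1$, so that $B:=\{(a_1,\dots,a_d)\mid 1\leq a_i\leq n-1,\ a_1+\cdots+a_d\leq n-1\}$ is nonempty and the monomials $e_a=x_0^{c}/(x_1^{a_1}\cdots x_d^{a_d})$ with $c=\sum_i a_i$, $a\in B$, form a $k$-basis of $V:=\lc^d_\fm(A)_0$; the case $n\leq d$ has $V=0$. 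Since $p$ is prime and $p\nmid n$, the residue $r:=p\bmod n$ lies in $\{1,\dots,n-1\}$ and is a unit modulo $n$, so $\sigma(a):=((ra_1)\% n,\dots,(ra_d)\% n)$ again has all entries in $\{1,\dots,n-1\}$, and $a\mapsto\sigma(a)$ is injective on $B$. The calculation in the proof of Theorem~\ref{theorem--dimension of the stable part for diagonal hypersurface} records that $F(e_a)=0$ whenever $\sum_i(ra_i)\% n\geq n$, while $F(e_a)=\lambda_a\,e_{\sigma(a)}$ with $\lambda_a\in k^{\times}$ whenever $\sum_i(ra_i)\% n< n$.

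The combinatorial heart of the argument is the claim that \emph{every $a\in B$ satisfies $\sum_i(ra_i)\% n< n$ if and only if $r=1$.} The ``if'' direction is trivial, as $r=1$ gives $(ra_i)\% n=a_i$. For ``only if'' I argue contrapositively: given $r\geq 2$, I produce $a\in B$ with $\sum_i(ra_i)\% n\geq n$. If $dr\geq n$, take $a=(1,\dots,1)$, which lies in $B$ because $d\leq n-1$ and satisfies $\sum_i(ra_i)\% n=dr\geq n$. If $dr<n$, I instead look for $a=(a_1,1,\dots,1)$ with $1\leq a_1\leq n-d$ (so $a\in B$) and $(ra_1)\% n\geq n-(d-1)r$, which yields $\sum_i(ra_i)\% n=(ra_1)\% n+(d-1)r\geq n$. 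Since multiplication by $r$ permutes $\{1,\dots,n-1\}$, exactly $(d-1)r$ choices of $a_1\in\{1,\dots,n-1\}$ send $(ra_1)\% n$ into the window $[\,n-(d-1)r,\ n-1\,]$ (a window of positive integers because $dr<n$), whereas forbidding $a_1\in\{n-d+1,\dots,n-1\}$ discards at most $d-1$ of them; as $r\geq 2$ forces $(d-1)r>d-1$, an admissible $a_1\leq n-d$ remains. This is the one step I expect to demand genuine care, since it is exactly where $a_1\leq n-d$ must be balanced against the size of the ``good'' residue window and the only place the hypothesis $r\geq 2$ enters.

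It then remains to harvest the two statements. For injectivity: if $r=1$, then $\sigma=\mathrm{id}$ and $F$ sends each basis vector $e_a$ to the nonzero multiple $\lambda_a e_a$ of itself, hence $F$ is injective on $V$; if $r\geq 2$, the claim supplies $a\in B$ with $F(e_a)=0$, so $F$ is not injective. Thus $F$ acts injectively on $\lc^d_\fm(A)_0$ precisely when $r=1$, i.e. when $p\equiv 1\pmod n$. For the stable part: Theorem~\ref{theorem--dimension of the stable part for diagonal hypersurface} identifies $\dim_k(\lc^d_\fm(A)_0)_s$ with the number of $a\in B$ meeting all the listed inequalities, the first of which is just membership in $B$; hence this number is at most $|B|=\dim_k\lc^d_\fm(A)_0$, with equality exactly when every $a\in B$ satisfies all the remaining inequalities. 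By the claim the inequality $\sum_i(ra_i)\% n<n$ already fails somewhere on $B$ unless $r=1$, while for $r=1$ each listed inequality collapses (via $r^j\equiv 1\bmod n$) to $\sum_i a_i<n$, valid on all of $B$; so $\dim_k(\lc^d_\fm(A)_0)_s=\dim_k\lc^d_\fm(A)_0$ iff $r=1$. Since $(\lc^d_\fm(A)_0)_s$ is a subspace of the finite-dimensional space $\lc^d_\fm(A)_0$, equality of dimensions means $(\lc^d_\fm(A)_0)_s=\lc^d_\fm(A)_0$, and the corollary follows.
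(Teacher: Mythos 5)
Your proof is correct, and it follows the same overall strategy as the paper: read both equivalences off the explicit monomial-basis description of Frobenius from the proof of Theorem~\ref{theorem--dimension of the stable part for diagonal hypersurface}, and for the converse direction exhibit a basis element killed by Frobenius. The one place where you genuinely diverge is the construction of that witness tuple, and your version is in fact more careful than the paper's. The paper takes $a_1$ with $a_1r\equiv -1\pmod n$, so that $(ra_1)\% n=n-1$ and the remaining coordinates contribute at least $d-1\geq 1$; but it never checks that the resulting tuple satisfies $a_1+\cdots+a_d<n$, and for $d\geq 3$ this can fail (e.g.\ $n=7$, $r=2$, $d=5$ forces $a_1=3$ and the tuple $(3,1,1,1,1)$ has coordinate sum $7$, so it does not index a basis element). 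Your two-case argument --- take $(1,\dots,1)$ when $dr\geq n$, and otherwise count how many $a_1\in\{1,\dots,n-1\}$ land $(ra_1)\% n$ in the window $[\,n-(d-1)r,\,n-1\,]$ and observe that at most $d-1<(d-1)r$ of them violate $a_1\leq n-d$ --- closes exactly this gap and works for all $d\geq 2$. The derivation of the ``as a consequence'' statement from the solution count in Theorem~\ref{theorem--dimension of the stable part for diagonal hypersurface} is also fine. So: same route, but your existence argument for the kernel element is the robust one.
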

\begin{proof}
If $p \equiv 1 \mod n$ the claim immediately follows from
Theorem~\ref{theorem--dimension of the stable part for diagonal hypersurface}.
For the other direction, we need to show that there are integers $a_i\geq 1$ such
that $a_1 + \cdots + a_d < n$ but $(ra_1)\% n + \cdots + (ra_d) \% n \geq n$.

Now $r$ is invertible modulo $n$,  take $1 \leq a_1 < n-1$ such that $a_1r \equiv -1 \mod n$.
Then since $ra_i \geq 1$ for all $i > 1$, we always have $(ra_1)\% n + \ldots + (ra_d)\% n \geq n$ as $d\geq 2$. %If $r$ is not invertible, $p=r$ must divide $n$. Then any $a_1$ such that $pa_1 = n$ satisfies the required conditions.
\end{proof}

% \begin{corollary}
% If $p^h \equiv -1 \mod n$ for some $h$, then Frobenius acts nilpotently on $\lc_\m^2(A)_0$. As a consequence, $(\lc_\m^2(A)_0)_s=0$ in this case.
% \end{corollary}
% \begin{proof}
% Consider the equation
% \[(-i)\% n + (-j)\% n < n\]
% corresponding to $r^k$ (which is $\equiv p^h\equiv -1 \mod n$). Since $1 \leq i,j \leq n-1$, we must have $2n - i - j < n$.
% So $n < i + j$, a contradiction with the first equation of the system.
% \end{proof}

\begin{corollary}\label{nilpotent roots}
If $p^h \equiv -1 \mod n$ for some $h$, then Frobenius acts nilpotently on $\lc_\m^{d}(A)_0$. As a consequence, $(\lc_\m^d(A)_0)_s=0$ in this case.
\end{corollary}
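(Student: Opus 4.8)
The plan is to read the conclusion directly off the combinatorial description of the Frobenius action on a basis of $\lc^d_\m(A)_0$ established in the proof of Theorem~\ref{theorem--dimension of the stable part for diagonal hypersurface}.

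First I would record that the hypothesis $p^h\equiv -1\pmod n$ forces $p\nmid n$: otherwise $p\mid n$, and since $n\mid p^h+1$ this would give $p\mid 1$, absurd. Hence everything in the proof of Theorem~\ref{theorem--dimension of the stable part for diagonal hypersurface} is available. Recall from that proof that, with $r=p\%n$, a basis element $\frac{x_0^c}{x_1^{a_1}\cdots x_d^{a_d}}$ (each $a_\ell\ge 1$, $c=a_1+\cdots+a_d<n$) is sent by the $j$-th iterate $F^j$ of Frobenius either to $0$ or to a nonzero scalar multiple of $\frac{x_0^{(r^jc)\%n}}{x_1^{(r^ja_1)\%n}\cdots x_d^{(r^ja_d)\%n}}$, and that the latter occurs only if $(r^ia_1)\%n+\cdots+(r^ia_d)\%n<n$ for every $1\le i\le j$ (this is where $p\nmid n$ enters, via Remark~\ref{remark--elementary number theory}, which guarantees the intermediate exponents $(r^ia_\ell)\%n$ are all $\ge 1$, so the displayed fraction is a legitimate basis element).

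Next I would apply this with $j=h$. Since $r^h\equiv p^h\equiv -1\pmod n$ and $1\le a_\ell\le n-1$, we get $(r^ha_\ell)\%n=n-a_\ell$ for each $\ell$, hence
\[(r^ha_1)\%n+\cdots+(r^ha_d)\%n=dn-c\ge (d-1)n+1>n,\]
using $d\ge 2$ and $c\le n-1$. So the survival condition at the $h$-th step fails for every basis element, i.e.\ $F^h$ annihilates every element of the $k$-basis of $\lc^d_\m(A)_0$. Since $F^h$ is additive and $p^h$-semilinear ($F^h(\lambda w)=\lambda^{p^h}F^h(w)$ for $\lambda\in k$), it therefore vanishes identically on $\lc^d_\m(A)_0$, which is exactly the asserted nilpotency. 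The statement about the stable part is then immediate: $(\lc^d_\m(A)_0)_s=\bigcap_{j\ge1}k\langle F^j(\lc^d_\m(A)_0)\rangle$ is contained in $k\langle F^h(\lc^d_\m(A)_0)\rangle=0$.

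There is no serious obstacle here; the only points requiring care are invoking the iterate formula from the proof of Theorem~\ref{theorem--dimension of the stable part for diagonal hypersurface} in the correct form (in particular, that it already builds in $p\nmid n$), and the elementary remark that an additive semilinear operator vanishing on a $k$-basis vanishes everywhere. The degenerate cases $n\le 2$ are vacuous, since then $\lc^d_\m(A)_0=0$ for $d\ge 2$.
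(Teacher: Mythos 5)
Your proof is correct and follows the same route as the paper: both arguments evaluate the inequality corresponding to $r^h\equiv -1\pmod n$, compute $(-a_\ell)\%n=n-a_\ell$, and observe that $dn-(a_1+\cdots+a_d)\geq n$ always holds when $d\geq 2$ and $a_1+\cdots+a_d<n$, so the survival condition fails for every basis element. Your added remarks (that the hypothesis forces $p\nmid n$, the semilinearity point, and the degenerate cases $n\le 2$) are sound but not essentially different from the paper's argument.
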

\begin{proof}
Consider the equation
\[(-a_1)\% n + \cdots + (-a_d)\% n < n\]
corresponding to $r^h$ (which is $\equiv p^h\equiv -1 \mod n$).
Since $1 \leq a_i \leq n-1$, $(-a_1)\% n = n - a_1$, so the equation becomes
\[
dn - a_1 - \cdots - a_d < n.
\]
But this equation has no solution since $a_1 + \cdots + a_d < n$ and $d\geq 2$.
\end{proof}

\begin{remark}
The converse to the last corollary does not hold. For example, if $n = 11$ and $d = 2$, then a direct computation shows that
Frobenius acts nilpotently on $\lc^{2}_{\fm}(A)_0$ unless $p \equiv 1 \mod n$.
\end{remark}

\section{$D$-module length vs. $F$-module length}
\label{section: examples}
We continue to use the notation as in the beginning of Section 4 and Section 5. In \cite{BlickleDmodulestructureofRFmodules}, Blickle made a deep study on the comparison of $D$-module and $F$-module length. For example, in \cite[Theorem 1.1 or Corollary 4.7]{BlickleDmodulestructureofRFmodules} it was proved that if $k$ is algebraically closed, then for every $F$-finite $F^\infty_R$-module $M$, we have $l_{F^\infty_R}(M)=l_{D_R}(M)$, which is also $=l_{D(R,k)}(M)$ since $D_R=D(R,k)$ when $k$ is perfect. Moreover, when $k$ is perfect but not algebraically closed, Blickle constructed an example \cite[5.1]{BlickleDmodulestructureofRFmodules} of a simple $F_R^\infty$-module that is not $D_R$-simple (equivalently, not $D(R,k)$-simple since $k$ is perfect). In particular, even the $F_R^\infty$-module length may differ from the $D(R,k)$-module length in general.

However it is not clear whether these pathologies are artificial, i.e., can they occur for local cohomology modules with their natural $F_R$-module structure? In this section we will construct an example of a local cohomology module of $R$, with $k$ algebraically closed, such that its $F_R$-module length is strictly less than its $D_R$-module length.

To begin, let $V$ be a vector space over a field $k$ of positive characteristic $p$.
Then we can describe a $e$-th Frobenius action $f$ on $V$ in the following way.
Choose a basis $e_1, \ldots, e_n$ of $V$. Let $f(e_i) = a_{1i}e_1 + \ldots + a_{ni}e_n$.
Then for any element $b = (b_1, \ldots, b_n)^T$ written in the basis $e_i$, we can write
\[
f(b) = A b^{[p^e]}
\]
where $A = (a_{ij})$ and $[b^{p^e}]$ raises all entries to $p^e$-th power.
Or explicitly,
\[
f(b) = \begin{pmatrix}
 a_{11} & a_{12} & \cdots & a_{1n} \\
 a_{21} & a_{22} & \cdots & a_{2n} \\
 \vdots  & \vdots  & \ddots & \vdots  \\
 a_{n1} & a_{m2} & \cdots & a_{nn}
\end{pmatrix}
\begin{pmatrix}
 b_1^{p^e} \\ \vdots \\ b_n^{p^e}
\end{pmatrix}.
\]

Via this description it is very easy to see the following result.

\begin{lemma}
\label{lemma--change of basis}
Let $k$ be a field of positive characteristic $p$ and let $V$ be a finite dimensional vector space with a $e$-th Frobenius action $f$.
Let $A$ be a matrix describing the action of $f$ in some basis $e_i$.
Then in a new basis obtained by an orthogonal matrix $O$, $f$ is represented by the matrix $O A (O^{\tau})^{[p^e]}$,
where all entries of the transpose $O^{\tau}$ are raised to the $p^e$-th power.
\end{lemma}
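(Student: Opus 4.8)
The plan is to work directly with the matrix description of a Frobenius action introduced just above the statement. Recall that if $f$ is an $e$-th Frobenius action on $V$ and $b\in V$ is written as a column vector in the basis $e_1,\dots,e_n$, then $f(b)=Ab^{[p^e]}$ where $A=(a_{ij})$ records $f(e_j)=\sum_i a_{ij}e_i$. So the whole content of the lemma is to track how the matrix $A$ transforms under a change of basis, keeping in mind that passing to a new basis raises coordinates to the $p^e$-th power inside the formula for $f$.

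\textbf{Key steps.} First I would fix notation: let $O$ be the (orthogonal) change-of-basis matrix, so that the new basis vectors $e_i'$ and the old ones are related in the standard way, and if $b$ has coordinate vector $c$ in the new basis then its coordinate vector in the old basis is $Oc$ (this is where I must be careful about the exact convention; with $O$ orthogonal, $O^{-1}=O^{\tau}$, which is the reason the hypothesis is stated for orthogonal matrices rather than arbitrary invertible ones — it keeps the inverse in the ring without denominators). Second, I would compute $f(b)$ in the new coordinates: on one hand the old-coordinate value is $f(Oc)=A(Oc)^{[p^e]}$; the crucial algebraic point is that the $p^e$-power map is applied entrywise, so $(Oc)^{[p^e]}=O^{[p^e]}c^{[p^e]}$, i.e. entrywise $p^e$-th powers commute with matrix multiplication because the characteristic is $p$ and the Frobenius on $k$ is a ring homomorphism. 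Third, I would convert this back to new coordinates by multiplying on the left by $O^{-1}=O^{\tau}$, obtaining $f$ represented in the new basis by $c\mapsto O^{\tau}A\,O^{[p^e]}c^{[p^e]}$, and then observe $O^{[p^e]}=(O^{\tau})^{\tau,[p^e]}$; rewriting using orthogonality $O^{\tau}=O^{-1}$ gives exactly the claimed matrix $O A (O^{\tau})^{[p^e]}$ after renaming $O\leftrightarrow O^{\tau}$ consistently with the paper's convention (one of the two orientations matches the statement verbatim; I would just pick that one from the start).

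\textbf{Main obstacle.} There is no real analytic or algebraic difficulty here — the only thing to be careful about is the bookkeeping of which matrix ($O$ or $O^{\tau}$) goes where, and making sure the $[p^e]$ superscript lands on the right factor. The single genuine observation is that entrywise Frobenius is multiplicative on matrices, i.e. $(MN)^{[p^e]}=M^{[p^e]}N^{[p^e]}$, which holds because each entry of $MN$ is a polynomial (with integer, hence $\mathbb{F}_p$, coefficients) in the entries of $M$ and $N$ and Frobenius is a ring endomorphism; orthogonality of $O$ is then exactly what lets us replace $O^{-1}$ by $O^{\tau}$ so that no inverses or denominators appear. I expect the write-up to be a two-line computation once these conventions are pinned down, matching the paper's assertion that the result is "very easy to see."
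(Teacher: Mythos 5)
Your computation is correct and is exactly the argument the paper has in mind; in fact the paper omits the proof entirely, stating only that the lemma is ``very easy to see'' from the matrix description of a Frobenius action, and your write-up supplies precisely that two-line verification (the semilinearity identity $(Oc)^{[p^e]}=O^{[p^e]}c^{[p^e]}$ plus $O^{-1}=O^{\tau}$). The only caveat, which you already flag, is fixing the change-of-basis convention so that the old coordinates are $O^{\tau}c$ rather than $Oc$, which makes the new matrix come out verbatim as $OA(O^{\tau})^{[p^e]}$.
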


\begin{proposition}
\label{proposition--Fmatrix}
Let $R = k[x_1, \dots, x_n]$ or $k[[x_1, \ldots, x_n]]$ and $V$ be a $k$-vector space with a $e$-th Frobenius action $f$.
Then $l_{F^e_R} (\scr{H}_{R,R}(V))$ is the length of any longest flag of $f$-subspaces
\[
0 \subset V_1 \subset V_2 \subset \ldots \subset V_m = V
\]
such that $f$ is not nilpotent on $V_i/V_{i-1}$ for every $i$.

In particular, $l_{F^e_R} (\scr{H}_{R,R}(V)) = \dim V$ if and only if there is a basis of $V$
such that, with the notation as in Lemma \ref{lemma--change of basis}, $f$ can be represented by an upper-triangular matrix $A$ with nonzero entries on the main diagonal.
\end{proposition}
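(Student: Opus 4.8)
The plan is to reduce everything to a statement about the $R$-module $V$ (finitely generated, hence a root for $\scr{H}_{R,R}(V)$) together with the $R$-linear map $\beta\colon V\to F^e(V)$ induced by $f$. The key observation is that for a finite-dimensional $k$-vector space $V$, the Frobenius functor $F^e$ over $R$ restricts, on the subcategory of $k$-subspaces, to the $p^e$-power base change on $V$; concretely, writing $f$ in coordinates as $f(b)=Ab^{[p^e]}$ exhibits $\beta$ as the map $V\to F^e(V)\cong V$ represented by the matrix $A$ (after the standard identification $F^e(V)=R^{(e)}\otimes_R V = k^{(e)}\otimes_k V\cong V$). Under this dictionary, $f$-invariant $k$-subspaces $V'\subseteq V$ correspond exactly to $R$-submodules of the root $V$ that are $\beta$-stable (their image under $\beta$ lies in $F^e(V')\subseteq F^e(V)$), which by \cite[Corollary 2.6]{LyubeznikFModulesApplicationsToLocalCohomology} (as invoked in the proof of Lemma \ref{Lemma: submodules of F-finite F-modules}) correspond to $F^e_R$-submodules $\scr{H}_{R,R}(V')\subseteq\scr{H}_{R,R}(V)$.

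First I would record that $\scr{H}_{R,R}$ is exact and sends $f$-subspaces to $F^e_R$-submodules, and that $\scr{H}_{R,R}(V/V')\cong\scr{H}_{R,R}(V)/\scr{H}_{R,R}(V')$. Then the crucial point is: $\scr{H}_{R,R}(W)=0$ if and only if $f$ is nilpotent on $W$. This is because $\scr{H}_{R,R}(W)=\varinjlim(W^{\vee}\to F^e(W^{\vee})\to\cdots)$ built from $\alpha^{\vee}$, and the direct limit vanishes precisely when the transition maps are eventually zero, i.e., when the dual map — equivalently $f$ itself — is nilpotent (here $W$ is finite-dimensional so $W^{\vee}\cong W$). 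Granting this, a flag $0\subset V_1\subset\cdots\subset V_m=V$ of $f$-subspaces with $f$ non-nilpotent on every quotient maps to a chain $0\subset\scr{H}_{R,R}(V_1)\subset\cdots\subset\scr{H}_{R,R}(V_m)=\scr{H}_{R,R}(V)$ of $F^e_R$-submodules with nonzero quotients, so $l_{F^e_R}(\scr{H}_{R,R}(V))\geq m$; conversely, any composition series of $\scr{H}_{R,R}(V)$ pulls back (intersecting with the root $V$) to a chain of $f$-subspaces, and refining/discarding repetitions yields an $f$-flag with $f$ non-nilpotent on each quotient of length equal to $l_{F^e_R}(\scr{H}_{R,R}(V))$. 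Taking the longest such $f$-flag gives equality. This proves the first assertion.

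For the "in particular" statement, the content is purely linear-algebraic. If $l_{F^e_R}(\scr{H}_{R,R}(V))=\dim V=:n$, then the maximal $f$-flag has all steps one-dimensional, so choosing a basis $v_i$ adapted to the flag makes the matrix of $f$ upper-triangular (with respect to a basis, using the change-of-basis formula of Lemma \ref{lemma--change of basis}); non-nilpotence of $f$ on each one-dimensional quotient $V_i/V_{i-1}$ forces the $i$-th diagonal entry to be nonzero. Conversely, if some basis makes $A$ upper-triangular with nonzero diagonal, the coordinate subspaces $V_i=\langle v_1,\dots,v_i\rangle$ form an $f$-flag, and on each quotient $f$ acts by a nonzero scalar composed with $p^e$-th power, which is manifestly not nilpotent, so the flag has length $n$ and $l_{F^e_R}(\scr{H}_{R,R}(V))=n$.

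The main obstacle I anticipate is pinning down cleanly the equivalence "$\scr{H}_{R,R}(W)=0 \iff f$ nilpotent on $W$" and, relatedly, making the correspondence between $f$-subspaces of $V$ and $F^e_R$-submodules of $\scr{H}_{R,R}(V)$ precise — in particular checking that intersecting an arbitrary $F^e_R$-submodule with the root $V$ really does return an $f$-stable $k$-subspace and not something larger. Both should follow from \cite[Corollary 2.6]{LyubeznikFModulesApplicationsToLocalCohomology} together with the fact that $V$ is a root (so $\beta$ is injective and submodules of $\scr{H}_{R,R}(V)$ are detected on $V$), but it is the step that requires care; everything after that is bookkeeping with flags and the change-of-basis lemma.
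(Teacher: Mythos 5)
Your argument for the second (``in particular'') assertion is exactly the paper's: choose a basis adapted to a maximal flag in one direction, and read off the coordinate flag from an upper-triangular matrix in the other. For the first assertion, however, you take a genuinely different route: the paper simply cites \cite[Theorem 4.7]{LyubeznikFModulesApplicationsToLocalCohomology}, which is precisely the statement that $l_{F^e_R}(\scr{H}_{R,R}(V))$ equals the maximal length of a filtration by $f$-stable subspaces with non-nilpotent quotients, whereas you re-derive that theorem from the construction of $\scr{H}_{R,R}$. Your derivation has the right skeleton --- exactness of $\scr{H}_{R,R}$, the equivalence ``$\scr{H}_{R,R}(W)=0$ iff $f$ is nilpotent on $W$'' (which is correct: the limit of $W^\vee\to F^e(W^\vee)\to\cdots$ vanishes iff the composites are eventually zero, iff some $f^i(W)$ spans $0$), and the correspondence of \cite[Corollary 2.6]{LyubeznikFModulesApplicationsToLocalCohomology} --- but be careful on two points. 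First, the generating module for $\scr{H}_{R,R}(V)$ is the Matlis dual $V^\vee$ equipped with $\alpha^\vee$, not $V$ itself; identifying the two is harmless for finite-dimensional spaces but it reverses the sub/quotient dictionary ($R$-submodules of $V^\vee$ correspond to $f$-stable \emph{quotients} of $V$). Second, $V^\vee$ is a \emph{root} only when $\alpha\colon F^e(V)\to V$ is surjective, i.e.\ when $f(V)$ spans $V$; in general $\alpha^\vee$ has a kernel and you must first pass to the stable image before invoking the bijection between $F^e_R$-submodules and submodules of the root. Both issues are repairable by standard arguments (and you flag the second yourself), but as written the claim ``$V$ is a root, so $\beta$ is injective'' is false in general. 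The net comparison: your route is self-contained but needs this duality bookkeeping done carefully; the paper's route outsources all of it to Lyubeznik's theorem.
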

\begin{proof}
The first claim is \cite[Theorem 4.7]{LyubeznikFModulesApplicationsToLocalCohomology}.

If $l_{F^e_R}(\scr{H}_{R,R}(V)) = \dim V$, then we must have $\dim V_i = i$. Then we will choose a compatible basis for the flag,
i.e., $V_i = k\langle e_1, \ldots, e_i\rangle$. Since $f(V_i) \subseteq V_i$. Now $f(e_i)= a_{i1}e_1 + \ldots + a_{ii}e_i$. Thus the matrix representing $f$ is upper-triangular. Moreover, since $f$ acts nontrivially on $V_i/V_{i-1}$, we must have $a_{ii}\neq 0$. So the matrix have nonzero entries on the main diagonal.

Conversely, if the matrix is upper-triangular with nonzero entries on the main diagonal in some basis, it is easy to see that $V_i = k\langle e_1, \ldots, e_i\rangle$ form a flag of $f$-subspaces with $f$ acts nontrivially on $V_i/V_{i-1}$.
\end{proof}

\begin{remark}\label{cycle eigenvectors}
Before proceeding further, we need a simple result in linear algebra.
Over a finite field $\mathbb{F}_p$ where $p \neq 3$, consider the matrix
\[
M=\begin{pmatrix}
0 & 0 & a \\
a & 0 & 0 \\
0 & a & 0
\end{pmatrix}.
\]
where $a\neq 0$ in $\mathbb{F}_p$. The characteristic polynomial of this matrix is
\[
\lambda^3 - a^3 = (\lambda - a)(\lambda^2 + \lambda a + a^2).
\]
The discriminant of the quadratic polynomial is $D = -3a^2$. 	
Thus if $-3$ is a quadratic residue in $\mathbb{F}_p$ then the characteristic polynomial has three distinct eigenvalues.
On the other hand, if $-3$ is not a quadratic residue, then $(1, 1, 1)$ is the only eigenvector and the restriction of the matrix $M$ on the subspace of $\mathbb{F}_p^3$ orthogonal to $(1,1,1)$ can be expressed as
\[
\begin{pmatrix}
0 & -a \\
a & -a
\end{pmatrix},
\]
which has no eigenvalues over $\mathbb{F}_p$.

By Quadratic Reciprocity, $-3$ is a quadratic residue if and only if $p = 1 \pmod 3$.
Thus if $p \neq 1 \pmod 3$, this matrix has no eigenvalues over $\mathbb{F}_{p}$ and thus cannot be transformed in an upper-triangular form by a change of basis.
Otherwise, it has three eigenvectors and can be transformed in an upper-triangular form.
\end{remark}

Our first example shows that $l_{F_R}(\lc_I^{n-c}(R))$ can be strictly less than $l_{F^\infty_R}(\lc_I^{n-c}(R))$ and thus strictly less than $l_{D(R,k)}(\lc_I^{n-c}(R))$ by (\ref{equation--basic relation on length in different categories}) if $k$ is not separably closed, even when $A=R/I$ has isolated singularities.

%\todo{Will it be better to write the following in terms of the residues modulo 21?}

\begin{corollary}
\label{corollary--D-length bigger than F-length 1}
Let $p$ be a prime number, $R = \mathbb{F}_{p}[x, y, z]$, and $f = x^7 + y^7 + z^7$.
Then
\[
l_{F_R^\infty}(\lc^1_{f} (R))=l_{D_R}(\lc^1_{f} (R))=
\begin{cases}
16 &\text { if } p \equiv 1 \pmod 7,\\
7 &\text { if } p \equiv 2 \text { or } 4 \pmod 7,\\
1 &\text { otherwise.}
\end{cases}
\]
On the other hand,
\[
l_{F_R}(\lc^1_{f} (R))=
\begin{cases}
16 &\text { if } p \equiv 1 \pmod 7,\\
7 &\text { if } p \equiv 2 \text { or } 4\pmod 7 \text{ and } p \equiv 1 \pmod 3,\\
5 &\text { if } p \equiv 2 \text { or } 4\pmod 7 \text{ and } p \not\equiv 1 \pmod 3,\\
1 &\text { otherwise.}
\end{cases}
\]
In particular,
$ l_{D_R}(\lc^1_{f} (R)) \neq l_{F_R}(\lc^1_{f} (R))$ for any $p \equiv 11 \pmod {21}$.
\end{corollary}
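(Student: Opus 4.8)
The plan is to handle the $D_R$- and $F^\infty_R$-module lengths together (they yield the first displayed formula and show $l_{D_R}=7$ when $p\equiv 11\pmod{21}$), and then to compute the finer $F_R$-module length. For $p\neq 7$ the ring $A=R/(f)$, $f=x^7+y^7+z^7$, is a two-dimensional domain with an isolated singularity at $\m$: the projective Fermat septic is smooth, hence geometrically connected and geometrically irreducible, so $f$ is irreducible and $\Sing A=\{\m\}$. Since $\mathbb{F}_p$ is perfect we have $D_R=D(R,\mathbb{F}_p)$, and since it is finite, Theorem \ref{theorem--D-module length for isolated singularities} together with Remark \ref{remark--F-module length for isolated singularities} gives
\[
l_{D_R}(\lc^1_f(R))=l_{F^\infty_R}(\lc^1_f(R))=\dim_{\mathbb{F}_p}\bigl(\lc^2_\m(A)\bigr)_s+1 .
\]
I would then evaluate $\dim_{\mathbb{F}_p}(\lc^2_\m(A))_s$ by Theorem \ref{theorem--dimension of the stable part for diagonal hypersurface} with $n=7$, $d=2$: for $p\equiv 1\pmod 7$, Corollary \ref{injective roots} gives the whole space, of dimension $\binom{6}{2}=15$; for $p\equiv 3,5,6\pmod 7$ one has $p^h\equiv-1\pmod 7$ with $h=3,3,1$ respectively, so Corollary \ref{nilpotent roots} gives $0$; and for $p\equiv 2,4\pmod 7$ the distinct powers of $r\in\{2,4\}$ modulo $7$ are $1,2,4$, so one enumerates the pairs $1\le a_1,a_2\le6$ with $a_1+a_2<7$, $(2a_1)\%7+(2a_2)\%7<7$, $(4a_1)\%7+(4a_2)\%7<7$, and a short check leaves exactly the six pairs obtained by ordering $\{1,2\}$, $\{1,4\}$, $\{2,4\}$. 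Hence $l_{D_R}=l_{F^\infty_R}$ equals $16$, $1$, $7$ in the three cases; in particular it is $7$ whenever $p\equiv 4\pmod 7$. (When $p=7$, $f=(x+y+z)^7$, so $\lc^1_f(R)\cong\lc^1_{(u)}(\mathbb{F}_7[u,y,z])$ is simple of length $1$ in every category, matching the "otherwise" value.)

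For the $F_R$-length, apply $\scr{H}_{R,A}$ to $0\to 0^*_{\lc^2_\m(A)}\to\lc^2_\m(A)\to\lc^2_\m(A)/0^*_{\lc^2_\m(A)}\to 0$ to obtain a short exact sequence of $F_R$-modules
\[
0\to\scr{H}_{R,A}\!\bigl(\lc^2_\m(A)/0^*\bigr)\to\lc^1_f(R)\to\scr{H}_{R,A}\!\bigl(0^*_{\lc^2_\m(A)}\bigr)\to0 .
\]
As in the proof of Theorem \ref{theorem--D-module length for isolated singularities}, the left-hand term is a single simple $F_R$-module ($A$ is a domain), contributing $1$ to $l_{F_R}(\lc^1_f(R))$. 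For the right-hand term, $0^*_{\lc^2_\m(A)}$ is a finite-dimensional $A\{f\}$-module supported at $\m$, so $\scr{H}_{R,A}(0^*)=\scr{H}_{R,R}(0^*)$, and by Proposition \ref{proposition--Fmatrix} its $F_R$-length is the length of a longest flag of $f$-subspaces with non-nilpotent successive quotients; such flags only see the stable part, which by Lemma \ref{lemma--stable parts of tight closure of top local cohomology modules} is $V:=(\lc^2_\m(A))_s$. The key observation is that over $\mathbb{F}_p$ this $f$ is honestly $\mathbb{F}_p$-linear (since $c^p=c$), and in the monomial basis of $\lc^2_\m(A)_0$ it sends $\tfrac{x_0^c}{x_1^{a_1}x_2^{a_2}}$ to a nonzero scalar times $\tfrac{x_0^{(cr)\%7}}{x_1^{(ra_1)\%7}x_2^{(ra_2)\%7}}$ or to $0$ (from the proof of Theorem \ref{theorem--dimension of the stable part for diagonal hypersurface}); thus $V$ is spanned by the basis vectors lying on Frobenius cycles, and $f|_V$ is block-diagonal along those cycles. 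Consequently: if $p\equiv1\pmod7$ then $r=1$, $f|_V$ is diagonal with nonzero entries, and Proposition \ref{proposition--Fmatrix} gives $l_{F_R}(\scr{H}_{R,R}(V))=15$, so $l_{F_R}(\lc^1_f(R))=16$; if $p\equiv3,5,6\pmod7$ then $V=0$ and $l_{F_R}(\lc^1_f(R))=1$; and if $p\equiv2,4\pmod7$ then $V$ is six-dimensional, $f$ has two disjoint $3$-cycles on the basis (the orbits of $(1,2)$ and of $(2,1)$), interchanged by the automorphism $y\leftrightarrow z$ of $f$, so $V=W_1\oplus W_2$ with $W_2\cong W_1$ as $\mathbb{F}_p[f]$-modules and $f|_{W_1}$ of the cyclic shape $e_1\mapsto a_1e_2\mapsto a_2e_3\mapsto a_3e_1$ with all $a_i\neq0$. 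I would then identify $f|_{W_1}$, up to base change, with the matrix $M$ of Remark \ref{cycle eigenvectors}, and read off the answer from that remark: for $p\equiv1\pmod3$, $M$ has three distinct nonzero eigenvalues over $\mathbb{F}_p$, hence is diagonalizable with nonzero diagonal, so $l_{F_R}(\scr{H}_{R,R}(W_i))=3$ and $l_{F_R}(\lc^1_f(R))=1+3+3=7$; for $p\not\equiv1\pmod3$, $M$ has only the eigenvector $(1,1,1)$, with an irreducible $2$-dimensional complementary invariant subspace on which $f$ is still invertible, so the longest admissible flag in each $W_i$ has length $2$ and $l_{F_R}(\lc^1_f(R))=1+2+2=5$. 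This is the second displayed formula; and since $p\equiv11\pmod{21}$ forces $p\equiv4\pmod7$ and $p\equiv2\pmod3$, in that case $l_{D_R}(\lc^1_f(R))=7\neq5=l_{F_R}(\lc^1_f(R))$.

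The main obstacle is the identification of $f|_{W_1}$ with $M$: concretely, that the three structure constants $a_1,a_2,a_3$ around each Frobenius $3$-cycle have product a perfect cube in $\mathbb{F}_p$ (a computation with the binomial coefficients from the proof of Theorem \ref{theorem--dimension of the stable part for diagonal hypersurface} in fact suggests $a_1=a_2=a_3$, with common value $\pm\binom{3k}{k}\bmod p$ when $p=7k+r$). Since $a_1a_2a_3$ is the scalar by which the $p^3$-power Frobenius acts on each cycle, and $p^3\equiv1\pmod7$, this reduces to a congruence among the binomial coefficients recording that (diagonal) action, which I expect to settle by Lucas-type manipulations or, more structurally, via the Jacobi-sum description of Frobenius on $H^1$ of the Fermat curve. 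Everything else is bookkeeping with Proposition \ref{proposition--Fmatrix}, additivity of length in the abelian category of $F_R$-modules, and the finite enumerations above.
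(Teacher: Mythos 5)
Your overall strategy coincides with the paper's: compute $\dim_{\mathbb{F}_p}V$ for $V=(\lc^2_\m(A)_0)_s$ via Theorem \ref{theorem--dimension of the stable part for diagonal hypersurface} and Corollaries \ref{injective roots}--\ref{nilpotent roots}, get the $D_R$- and $F^\infty_R$-lengths from Theorem \ref{theorem--D-module length for isolated singularities} (the paper gets $l_{F^\infty_R}$ by checking the third Frobenius iterate is diagonal rather than by citing the finite-field Proposition \ref{proposition--F-module length for isolated sing over finite field}, but both work), and then reduce the $F_R$-length in the $p\equiv 2,4\pmod 7$ case to the triangularizability question of Proposition \ref{proposition--Fmatrix} and Remark \ref{cycle eigenvectors} for the two $3$-cycles. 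Your explicit treatment of $p=7$ is a welcome addition the paper leaves implicit.

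However, there is a genuine gap, and it is exactly the step you defer: the identification of $f|_{W_i}$ with the matrix $M$ of Remark \ref{cycle eigenvectors}, i.e.\ the claim that the three structure constants $a_1,a_2,a_3$ around each cycle satisfy $a_1=a_2=a_3$ (or at least that $a_1a_2a_3$ is a cube in $\mathbb{F}_p^{\times}$). This is not bookkeeping: it is the decisive input for the subcase $p\equiv 2,4\pmod 7$ and $p\equiv 1\pmod 3$. There the characteristic polynomial of the cycle is $\lambda^3-a_1a_2a_3$, and since $\mathbb{F}_p$ contains the cube roots of unity, this polynomial either splits completely (if $a_1a_2a_3$ is a cube, giving flag length $3$ per cycle and total $F_R$-length $7$) or is irreducible (if not, giving flag length $1$ per cycle and total $F_R$-length $3$). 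So without the cube claim your argument cannot distinguish $7$ from $3$ in that subcase. (By contrast, for $p\equiv 2\pmod 3$ cubing is a bijection on $\mathbb{F}_p^{\times}$, so $\lambda^3-c$ always has exactly one root and an irreducible quadratic factor, and the answer $1+2+2=5$ is robust; likewise the final assertion for $p\equiv 11\pmod{21}$ survives.) The paper closes this gap by writing down the three binomial coefficients $\binom{6k+3}{2k+1}$, $\binom{3k+1}{k}$, $\binom{5k+2}{k}$ appearing in the cycle matrix for $p=7k+4$ (and the analogous ones for $p=7k+2$) and proving, via Wilson's theorem in the form $(n-1)!\,(p-n)!\equiv(-1)^n\pmod p$, that after accounting for the signs $(-1)^{\lfloor cp/7\rfloor}$ all three nonzero entries are equal; this is the ``Lucas-type manipulation'' you anticipated, and it must actually be carried out for the proof to be complete.
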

\begin{proof}
Let $V$ denote the $k$-vector space $(0^*_{\lc_{\fm}^2 (R/f)})_s=(\lc_{\fm}^2 (R/f)_0)_s$.
By Corollary~\ref{injective roots} and Corollary~\ref{nilpotent roots},
Frobenius acts injectively on $\lc_{\fm}^2 (R/f)_0$ if $p \equiv 1 \pmod 7$ and  nilpotently if $p \equiv 3,5,6 \pmod 7$.
When $p \equiv 2 \text { or } 4\pmod 7$, using the algorithm described in Theorem \ref{theorem--dimension of the stable part for diagonal hypersurface}
it can be checked that the Frobenius map (i.e., $e=1$) on $V$ is spanned by two $3$-cycles.
If $p \equiv 4 \pmod 7$, the cycles are
\[
\frac{z^3}{x^2y} \to \frac{z^5}{xy^4} \to \frac{z^6}{x^4y^2}
\text { and }
\frac{z^3}{xy^2} \to \frac{z^5}{x^4y} \to \frac{z^6}{x^2y^4}.
\]
While if $p \equiv 2 \pmod 7$, the cycles become
\[
\frac{z^3}{x^2y} \to \frac{z^6}{x^4y^2} \to \frac{z^5}{xy^4}
\text{ and }
\frac{z^3}{xy^2} \to \frac{z^6}{x^2y^4} \to \frac{z^5}{x^4y}.
\]
In particular, one obtains that
\[
\dim V=
\begin{cases}
15 &\text { if } p \equiv 1 \pmod 7,\\
6 &\text { if } p \equiv 2 \text { or } 4 \pmod 7,\\
0 &\text { otherwise.}
\end{cases}
\]

Since $R/(f)$ is an isolated singularity, $l_{D_R}(\lc^1_{f} (R)) = \dim V + 1$ by Theorem \ref{theorem--D-module length for isolated singularities}.
In the cases when Frobenius acts injectively or nilpotently on $\lc_{\fm}^2 (R/f)_0$, we also deduce from Proposition \ref{proposition--Fmatrix}
that $l_{F_R}(\lc^1_{f} (R)) = \dim V + 1$ (note that when Frobenius acts injectively on $\lc_{\fm}^2 (R/f)_0$, the proof of Theorem \ref{theorem--dimension of the stable part for diagonal hypersurface} shows that Frobenius sends each canonical basis element of $\lc_{\fm}^2 (R/f)_0$ to a multiple of itself, so the representing matrix is diagonal).

In the remaining cases, in order to compute its $F_R^e$-module length we will study a matrix
which represents the Frobenius map on $V$ by Proposition \ref{proposition--Fmatrix}.
We can use the proof of Theorem \ref{theorem--dimension of the stable part for diagonal hypersurface} to describe the Frobenius action on the cycles.
If $p = 7k + 4$, one obtains that the Frobenius action on both cycles are described by the matrix
\[
A = \begin{pmatrix}
0 & 0 & (-1)^{6k + 3} \binom{6k+3}{2k+1} \\
(-1)^{3k + 1} \binom{3k+1}{k} & 0 & 0 \\
0 & (-1)^{5k + 2} \binom{5k+2}{k} & 0
\end{pmatrix},
\]
where we have chosen the natural bases, e.g.
$e_1 = \frac{z^3}{x^2y}, e_2 = \frac{z^5}{xy^4}, e_3 = \frac{z^6}{x^4y^2}$ for the first cycle.
Similarly, if $p = 7k + 2$, the matrix is
\[
\begin{pmatrix}
0 & 0 & (-1)^{5k + 1} \binom{5k + 1}{4k + 1} \\
(-1)^{3k} \binom{3k}{k} & 0 & 0 \\
0 & (-1)^{6k + 1} \binom{6k+1}{4k + 1} & 0
\end{pmatrix}.
\]

We claim that the non-zero entries of $A$ are equal.
Observe that by Wilson's theorem
\[
(n-1)! (p - n)! = (n-1)! (p - n)(p - n - 1) \cdots 1 = (n-1)! (-n)(-n - 1) \cdots (-p+1) =  (-1)^{n}
\]
mod $p$. Furthermore, because $p = 7k + 4$ is odd, $k$ is odd.
Thus we can rewrite
\[
\binom{5k + 2}{k} =
\frac{(5k+2)!}{(k)!(4k+2)!}
= \frac{(p - 2k - 2)!}{(p - 6k - 4)!(4k + 2)!} = (-1)^{4k + 2} \binom{6k + 3}{2k + 1} = \binom{6k + 3}{2k + 1}
\]
and
\[
\binom{5k + 2}{k} =
\frac{(5k+2)!}{(k)!(4k+2)!}
= \frac{(p - 2k - 2)!}{k!(p - 3k - 2)!} = (-1)^{5k} \binom{3k + 1}{k} = - \binom{3k + 1}{k}.
\]
The case of $p = 7k +2$ is identical.

Since $a^{p} = a$ for any element $a \in \mathbb{F}_{p}$, the Frobenius action is linear.
Thus by Lemma \ref{lemma--change of basis} and Remark~\ref{cycle eigenvectors}, the matrix associated to the Frobenius map on the chosen basis can be transformed into upper-triangular form if and only if $p \equiv 1 \pmod 3$. Thus by Proposition \ref{proposition--Fmatrix}, in the case $p=7k+4$ or $p=7k+2$, we obtain that $l_{F_R}(\lc^1_{f} (R)) = l_{F_R}(\scr{H}_{R,R}(V)) + 1 = 7$ when $p \equiv 1 \pmod 3$, and otherwise $l_{F_R}(\lc^1_{f} (R) )= 5$.

Lastly, it is easy to see that the third iterate of the Frobenius map on $V$ can be represented by a diagonal matrix, hence
\[l_{F^3_R}(\scr{H}_{R,R}(V))=l_{F_R^3}(\scr{H}_{R,R}(0^*_{\lc_{\fm}^2 (R/f)}))=l_{F_R^\infty}(\scr{H}_{R,R}(0^*_{\lc_{\fm}^2 (R/f)}))=6\]
and $l_{F_R^\infty}(\lc^1_{f} (R))=7$.
\end{proof}

%\begin{remark}
%We suspect such pathologic behavior should be very common. As it is shown by our computations, the Frobenius action in Fermat hypersurfaces decomposes into cycles. But then it is likely that we will have no root of our polynomial, and thus no eigenvalues.
%\end{remark}

Finally, we exhibit an example of a local cohomology module of $R$, with $k$ algebraically closed, such that its $D(R,k)$-module length (equivalently, its $F_R^\infty$-module length) is strictly bigger than its $F_R$-module length. Recall that by Theorem \ref{theorem--D-module length for isolated singularities}, this cannot happen if $A=R/I$ has isolated singularities.

\begin{proposition}
\label{proposition--D-length bigger than F-length 2}
Let $p = 7k + 4$ be a prime number.
Let $R = \overline{\mathbb{F}}_{p}[x, y, z, t]$ and let $f = tx^7 + ty^7 + z^7$.
Then
\[
l_{F_R}(\lc^1_{f}(R))  = 3 < 7 = l_{F^\infty_R}(\lc^1_{f} (R))=l_{D(R,\overline{\mathbb{F}}_p)} (\lc^1_{f} (R)).
\]
\end{proposition}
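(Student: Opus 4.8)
\emph{The plan.} The point is that the non-$F$-rational locus of $A=R/(f)$ is exactly the $t$-axis $V(x,y,z)$, and after localizing at its generic point $P_0=(x,y,z)$ the ring becomes the cone over a twisted Fermat curve over the \emph{imperfect} field $\overline{\mathbb F}_p(t)$; it is this twisting, rather than a quadratic-residue condition as in Remark~\ref{cycle eigenvectors}, that manufactures the gap between $F_R$- and $D(R,k)$-length. First I would locate the singularities. A Jacobian computation (using $\partial_xf=7tx^6$, $\partial_yf=7ty^6$, $\partial_zf=7z^6$, $\partial_tf=x^7+y^7$, and $p\ne 7$) gives $\Sing V(f)=\{z=t=0,\ x^7+y^7=0\}\cup V(x,y,z)$. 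Along each of the seven lines $z=t=0,\ x=\zeta y$ the hypersurface has an $A_6$-type (hence $F$-regular) singularity, so it is $F$-rational there, while at $P_0$ we get $\widehat{A_{P_0}}=\widetilde A:=\overline{\mathbb F}_p(t)[[x,y,z]]/(f)$, the cone over a smooth curve of genus $\binom{6}{2}=15$, which is not $F$-rational. Thus the non-$F$-rational locus of $A$ equals $V(x,y,z)$, one-dimensional, and $\widetilde A$ has an isolated singularity (the same Jacobian entries are non-units away from $(x,y,z)$). Also $A$ and $\widetilde A$ are domains, since $-t(x^7+y^7)$ has $t$-order $1$ and so is not a $7$-th power. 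Hence $c=1$ and $P_0$ is the only dimension-one prime relevant to Theorem~\ref{theorem--Upper bounds for D-module length when singular locus has dimension 1}.

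\emph{The $D$-module length is $7$.} I would first check $\dim_{\overline{\mathbb F}_p}(\lc^3_{\m}(A))_s=0$: a monomial count shows $f^{p-1}\in(x,y,z)^{[p]}\subseteq\m^{[p]}$ (a monomial $t^{\alpha+\beta}x^{7\alpha}y^{7\beta}z^{7\gamma}$ of $f^{p-1}$ with $\alpha+\beta+\gamma=p-1$ avoiding $(x,y,z)^{[p]}$ would force $7(p-1)\le 3(p-1)$), hence $f^{p^e-1}\in(x,y,z)^{[p^e]}$ for all $e$; since for degree reasons (with $\deg x=\deg y=\deg z=1$, $\deg t=0$) the stable part of $\lc^3_\m(A)=\Ann_E f$ lies inside $\lc^3_\m(A)_{-7}=E_{-7}\subseteq\Ann_E f$, a direct inspection shows the semilinear map $f^{p-1}T$ is nilpotent on this finite piece. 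Next, $\dim_{\overline{\mathbb F}_p(t)}(\lc^2_{\m}(\widetilde A))_s=6$: $\widetilde A$ is a graded isolated singularity, so this equals $\dim(\lc^2_\m(\widetilde A)_0)_s$, and the computation in the proof of Theorem~\ref{theorem--dimension of the stable part for diagonal hypersurface} carries over unchanged — the only new feature of the Frobenius action on the monomial basis is a unit factor $(-t)^{\lfloor cp/7\rfloor}$ coming from $z^7=-t(x^7+y^7)$, which never kills a surviving term — so for $p\equiv 4\pmod 7$ the stable part is $6$-dimensional, spanned by two Frobenius $3$-cycles exactly as in Corollary~\ref{corollary--D-length bigger than F-length 1}. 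Theorem~\ref{theorem--Upper bounds for D-module length when singular locus has dimension 1} then gives $l_{D(R,k)}(\lc^1_f(R))\le 1+6+0=7$, hence also $l_{F_R^\infty}(\lc^1_f(R))\le 7$; and since $\overline{\mathbb F}_p$ is algebraically closed, $l_{D(R,k)}(\lc^1_f(R))=l_{F_R^\infty}(\lc^1_f(R))$ by Blickle's theorem recalled at the start of this section. The matching lower bounds come from the next step.

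\emph{Work over $\widetilde R=\overline{\mathbb F}_p(t)[[x,y,z]]$, then descend.} As $\widetilde A$ has an isolated singularity, Theorem~\ref{theorem--D-module length for isolated singularities} presents $\lc^1_f(\widetilde R)$ as an extension of $\scr H_{\widetilde R,\widetilde R}(V)$ by a single simple module supported at $(f)$, where $V$ is the $6$-dimensional stable part above. Writing one $3$-cycle in its canonical basis and using that the relevant binomial coefficients are all equal to one value $a$ (the Wilson-theorem argument of Corollary~\ref{corollary--D-length bigger than F-length 1} applies verbatim), its Frobenius is represented, with the conventions of Lemma~\ref{lemma--change of basis}, by
\[
a\begin{pmatrix} 0 & 0 & (-t)^{6k+3}\\ (-t)^{3k+1} & 0 & 0\\ 0 & (-t)^{5k+2} & 0\end{pmatrix}.
\]
Its cube is diagonal with nonzero entries, so by Proposition~\ref{proposition--Fmatrix} each cycle has $F^3_{\widetilde R}$-length, hence $F_{\widetilde R}^\infty$-length, equal to $3$; thus $l_{F_{\widetilde R}^\infty}(\lc^1_f(\widetilde R))=1+3+3=7$. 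On the other hand this semilinear map admits no flag of invariant subspaces with non-nilpotent quotients: it is invertible and cyclically permutes the three coordinate lines up to the displayed $t$-scalars, so any proper nonzero invariant subspace would give a semilinear eigenvector or an invariant $2$-plane over $\overline{\mathbb F}_p(t)$; solving the resulting equations and comparing $t$-adic valuations forces $p^2+p+1$ to divide an integer congruent to $(3k+2)+p(2k+1)$ modulo $p^2+p+1$, which fails for $p=7k+4$. Hence each cycle is $F_{\widetilde R}$-simple (Proposition~\ref{proposition--Fmatrix}) and $l_{F_{\widetilde R}}(\lc^1_f(\widetilde R))=1+1+1=3$. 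To descend, let $L\subseteq\lc^1_f(R)$ be the simple $F_R$-submodule supported at $(f)$ from Claim~\ref{claim--graded F-module filtration}; because $\dim_k(\lc^3_\m(A))_s=0$, the module $M$ of that claim is all of $\lc^1_f(R)$, so every composition factor of $\lc^1_f(R)/L$ is supported at $P_0$ and $\Ass(\lc^1_f(R)/L)=\{P_0\}$. Therefore $\lc^1_f(R)/L$ embeds $F$-equivariantly into its completion $\lc^1_f(\widetilde R)/(L\otimes\widehat{R_{P_0}})$ at $P_0$, which by Lemma~\ref{lemma--localizing and completing F-module} gives $l_{F_R}(\lc^1_f(R)/L)\le l_{F_{\widetilde R}}(\lc^1_f(\widetilde R))-1$ and likewise for $F^\infty$; the reverse inequalities follow by pushing a composition series down to $P_0$, where every factor stays nonzero and simple. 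Hence $l_{F_R}(\lc^1_f(R))=1+(3-1)=3$ and $l_{F_R^\infty}(\lc^1_f(R))=1+(7-1)=7=l_{D(R,\overline{\mathbb F}_p)}(\lc^1_f(R))$.

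\emph{Main obstacle.} The crux is the last part of the third step: ruling out, over the imperfect field $\overline{\mathbb F}_p(t)$, \emph{every} semilinear $f$-invariant subspace of a twisted $3$-cycle, which is precisely where the $t$-twist produces the discrepancy and which should reduce to a clean $p^2+p+1$ non-divisibility. The supporting facts — the vanishing $\dim_k(\lc^3_\m(A))_s=0$ and the $F$-rationality of the seven $A_6$-lines — are routine but genuinely needed, the former for the upper bound (so that $M=\lc^1_f(R)$ and there is no $E$-quotient) and the latter so that the descent to $P_0$ loses no composition factor.
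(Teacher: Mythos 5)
Your overall architecture coincides with the paper's: isolate $(x,y,z)$ as the only relevant dimension-one prime, show $(\lc^3_{\fm}(A))_s=0$ so that $M=\lc^1_f(R)$ in Claim \ref{claim--graded F-module filtration}, pass to $\widehat{R_P}/(f)\cong\overline{\mathbb F}_p(t)[[x,y,z]]/(f)$, and analyze the two twisted $3$-cycles. Your route to the first step (Jacobian locus plus $F$-regularity of the seven $A_6$-lines, which are toric rational double points) is a legitimate alternative to the paper's test-ideal computation $\tau(A)\supseteq(x^6,y^6,z^6)$, and your descent argument via Lemma \ref{lemma--localizing and completing F-module} is the paper's. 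However, two steps have genuine gaps. First, your proof that $(\lc^3_{\fm}(A))_s=0$ does not work as written: with $\deg t=0$ the piece $E_{-7}$ is spanned by all $\tfrac{1}{t^jx^ay^bz^c}$ with $a+b+c=7$ and $j\geq 1$, hence is \emph{infinite}-dimensional over $k$, so "a direct inspection shows $f^{p-1}T$ is nilpotent on this finite piece" is not available, and $f^{p-1}\in(x,y,z)^{[p]}$ by itself only gives non-$F$-purity, not nilpotence. The paper instead uses the weighted grading $\deg x=\deg y=1$, $\deg z=2$, $\deg t=7$, under which the degree-zero part is genuinely six-dimensional with an explicit basis on which nilpotence is checked by a degree count.

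Second, and more seriously, the heart of the proposition --- that each twisted $3$-cycle over the imperfect field $\overline{\mathbb F}_p(t)$ has no proper nonzero Frobenius-stable subspace --- is only sketched, and the one concrete claim you make about it is wrong for half the cases. Your valuation argument correctly handles invariant \emph{lines}: an eigenvector forces $(p^2+p+1)\mid(6k+3)+p(5k+2)+p^2(3k+1)\equiv(3k+2)+p(2k+1)$, and indeed $(3k+2)+p(2k+1)=\tfrac{2}{7}(p^2+p+1)$ is not divisible. But an invariant $2$-plane cannot be reduced to an eigenvector, since semilinear maps over non-perfect fields need not have eigenvectors on invariant subspaces; one must run the analogous valuation argument on $\Lambda^2$, whose twisted cyclic matrix has $t$-orders $8k+3$, $11k+5$, $9k+4$, and the obstruction there is $(k+1)+p(3k+2)=\tfrac{3}{7}(p^2+p+1)$ --- a \emph{different} integer from the one you name, so your assertion that both cases reduce to the same divisibility is false (though the conclusion happens to hold). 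The paper avoids this case split entirely by showing $\det\bigl(v,\,F(v),\,F^2(v)\bigr)\neq 0$ for every $v\neq 0$: after factoring out $t^{(3k+1)p+8k+3}$, a three-case comparison of the $t$-degrees of the six terms of $\Delta'$ exhibits a unique term of maximal degree, whence $\Delta'\neq 0$. This single computation rules out lines and planes simultaneously and is the bulk of the paper's proof; as submitted, your argument leaves precisely this crux unestablished.
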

\begin{proof}
Denote $R/(f)$ by $A$. First we claim that $(x,y,z)A$ is the only height-2 prime ideal of $A$ that contains the test ideal $\tau(A)$ of $A$. By \cite[Theorem 6.4]{SchwedeTuckerAsurveyoftestideals}, we have
\[\tau(A)=\sum_e\sum_{\phi\in \Hom_A(A^{1/p^e},A)}\phi(c^{1/p^e}),\]
where $c$ is a test element for $A$. According to \cite[Theorem on page 184]{HochsterFoundationsofTC}, $z^6$ is a test element for $A$ and we may set $c=z^6$.  Since $A=R/(f)$, we have
\[\tau(A)=\Big(\sum_e\sum_{\varphi\in \Hom_R(R^{1/p^e},R)}\varphi((f^{p^e-1}z^6)^{1/p^e})\Big)A.\]
It is straightforward to check that $(x^6,y^6,z^6)\subset \tau(A)$ and $\tau(A)$ has height 2. Hence $(x,y,z)A$ is the only height-2 prime ideal that contains the test ideal $\tau(A)$ of $A$. Consequently, $\widehat{A}_P$ is $F$-rational for each height-2 prime $P\neq (x,y,z)A$, or equivalently $0^*_{\lc^{2}_{P\widehat{A}_P}(\widehat{A}_P)}=0$.

Next we calculate the stable part of $\lc^3_{\fm}(A)$ where $\fm=(x,y,z,t)$. To this end, we assign the grading $\deg(x)=\deg(y)=1$, $\deg(z)=2$, and $\deg(t)=7$ degrees $1,1, 2, 7$ to $R$ and, consequently, $f$ is homogeneous. It is straightforward to check that $\lc^3_{\fm}(A)_0$ has a $\overline{\mathbb{F}}_{p}$-basis:
\[
\left [\frac{z^5}{tx^2y} \right ], \left [\frac{z^5}{txy^2}\right ], \left [\frac{z^6}{tx^4y} \right],
\left [\frac{z^6}{tx^3y^2} \right ], \left [\frac{z^6}{tx^2y^3}\right ], \left [\frac{z^6}{txy^4}\right].
\]
Because the degree of $z$ is larger that the degrees of $x$ and $y$,
each of these elements is nilpotent under the natural Frobenius action.
For example, raising the last generator to the power $p = 7k  + 4$, we get
\[
\left [\frac{z^{42k + 24}}{t^{7k + 4}x^{7k + 4}y^{28k + 16}}\right]
= \left [\frac{z^3(z^7)^{6k + 3}}{t^{7k + 4}x^{7k + 4}y^{28k + 16}}\right]
= \left [\frac{z^3(x^7 + y^7)^{6k + 3}}{t^{k + 1}x^{7k + 4}y^{28k + 16}}\right]
\]
which necessarily equals $0$ since the degree of any monomial in $x,y$ in the numerator is $7(6k+3)$ which is greater than $(7k+4)+(28k+16)$.
Hence $\lc^3_{\fm}(A)_s= (\lc^3_{\fm}(A)_0)_s=0$.

Given the grading on $R$, we are in the situation of Theorem \ref{theorem--Upper bounds for D-module length when singular locus has dimension 1}. By Claim \ref{claim--graded F-module filtration}, there exists a graded $F_R$-module filtration $0\subseteq L\subseteq M\subseteq \lc_f^1(R)$ where $L$ is supported at $(f)$, each $D(R,\overline{\mathbb{F}}_p)$-module (equivalently, $D_R$-module or $F_{R}^\infty$-module) composition factor of $M/L$ is supported at $(x,y,z)$, and $\lc_f^1(R)/M$ is supported only at $\m=(x,y,z,t)$.

By \cite[Corollary 4.2 and Theorem 4.4]{BlickleIntersectionhomologyDmodule}, $$l_{D(R,\overline{\mathbb{F}}_p)}(L)=l_{F_R^\infty}(L)=l_{F_R}(L)=1,$$ because there is only one minimal prime $(f)$ of $A$. Moreover, we have
\[
l_{D(R,\overline{\mathbb{F}}_p)}(\lc_f^1(R)/M)=l_{F_R^\infty}(\lc_f^1(R)/M)=l_{F_R}(\lc_f^1(R)/M)=0
\]
since $\dim_{\overline{\mathbb{F}}_p}(\lc_\m^3(A))_s=0$. Thus we actually have $M=\lc_f^1(R)$ in this example.

It remains to compute $l_{D(R,\overline{\mathbb{F}}_p)}(M/L)$, $l_{F_R^\infty}(M/L)$, and $l_{F_R}(M/L)$. Note that the first two are equal because we are working over an algebraically closed field \cite[Theorem 1.1]{BlickleDmodulestructureofRFmodules}. Moreover, if we take an $F_{R}^\infty$-module (resp. $F_R$-module) filtration of $M/L$, say
\[
L=M_0\subseteq M_1\subseteq M_2\subseteq\cdots\subseteq M_l=M=\lc_f^1(R),
\]
such that each $N_i=M_i/M_{i-1}$ is a simple $F_R^\infty$-module (resp. simple $F_R$-module) supported at $P=(x,y,z)$. Then if we localize at $P$ and complete, we have
\[
L\otimes\widehat{R_P}=M_0\otimes\widehat{R_P}\subseteq M_1\otimes \widehat{R_P}\subseteq\cdots \subseteq M_l\otimes \widehat{R_P}
=\lc_{f}^1(\widehat{R_P})
\]
such that each successive quotients $N_i\otimes\widehat{R_P}$ is still simple as an $F_{\widehat{R_P}}^\infty$-module (resp. simple as an $F_{\widehat{R_P}}$-module) by Lemma \ref{lemma--localizing and completing F-module}.

Observe that
\[
\widehat{R_P}/(f)\cong\overline{\mathbb{F}}_p(t)[[x,y,z]]/(tx^7+ty^7+z^7)
\]
which is an isolated singularity. Hence by Proposition \ref{proposition--Fmatrix} (and the proof of Theorem \ref{theorem--D-module length for isolated singularities}), the $F_{\widehat{R_P}}^\infty$-module (resp. $F_{\widehat{R_P}}$-module) length of $\lc_{f}^1(\widehat{R_P})/(L\otimes\widehat{R_P})$ is the longest flag of Frobenius-stable subspaces of
\[
V= \left (0^*_{\lc_{\fm}^2 (\overline{\mathbb{F}}_p(t)[[x,y,z]]/(tx^7+ty^7+z^7))} \right)_s.
\]

Via a direct computation similar to the proof of Theorem \ref{theorem--dimension of the stable part for diagonal hypersurface}
one can show that $\dim V = 6$  and $V$ is a direct sum of two three-dimensional Frobenius-stable subspaces. In the natural bases as in the proof of Corollary \ref{corollary--D-length bigger than F-length 1}, the Frobenius action on each cycle is represented by the matrix:
\[
A = \begin{pmatrix}
\!0 & \!0 & \!\binom {3k + 1}{k}t^{6k + 3} \\
\!\binom {3k + 1}{k}t^{3k + 1} & \!0 & \!0 \\
\!0 & \!\binom {3k + 1}{k}t^{5k + 2} & \!0
\end{pmatrix}
=
\binom {3k + 1}{k} \!\begin{pmatrix}
\!0 & \!0 & \!t^{6k + 3} \\
\!t^{3k + 1} & \!0 & \!0 \\
\!0 & \!t^{5k + 2} & \!0
\end{pmatrix}.
\]

We can easily see that the third iterate of the Frobenius map on $V$ can be represented by a diagonal matrix,
hence by Proposition \ref{proposition--Fmatrix}
\[l_{F_{\widehat{R_P}}^3}(\lc_{f}^1(\widehat{R_P})/(L\otimes\widehat{R_P}))=l_{F_{\widehat{R_P}}^\infty}(\lc_{f}^1(\widehat{R_P})/(L\otimes\widehat{R_P}))=6,
\]
and thus by the above discussion
\[l_{D(R,\overline{\mathbb{F}}_p)}(\lc_f^1(R))=l_{F_R^\infty}(\lc_f^1(R))=1+6=7.\]

Finally let us show that $V$ has no proper subspace stable under the Frobenius action. If $U$ is a proper Frobenius-stable subspace and $v \in U$, then we must have that $\langle v, F(v), F^2(v) \rangle \subseteq U \subsetneqq V$.
Thus if $v = (a, b, c)$ in the standard basis, then
\[
\det
\begin{pmatrix}
a & t^{6k + 3} c^p & t^{6k + 3 + (5k + 2)p } b^{p^2} \\
b & t^{3k + 1} a^p &  t^{3k + 1 + (6k + 3)p } c^{p^2} \\
c & t^{5k + 2} b^p &   t^{5k + 2 + (3k + 1)p } a^{p^2}
\end{pmatrix}
= 0
\]
Observe that if $w = \lambda v$ then
\[
\det (w, Fw, F^2w) = \lambda^{p^2 + p + 1}\det (v, Fv, F^2v),
\]
so we can multiply $v$ by the common denominator of $a, b, c$ and assume that $a, b, c \in \overline{\mathbb{F}}_p[t]$.

By a direct computation, we have
\begin{align*}
\Delta   = &\det
\begin{pmatrix}
a & t^{6k + 3} c^p & t^{6k + 3 + (5k + 2)p } b^{p^2} \\
b & t^{3k + 1} a^p &  t^{3k + 1 + (6k + 3)p } c^{p^2} \\
c & t^{5k + 2} b^p &   t^{5k + 2 + (3k + 1)p } a^{p^2}
\end{pmatrix}\\
= & t^{(3k + 1)p + 8k + 3} a^{p^2 + p + 1} - t^{(6k + 3)p + 8k + 3} ab^pc^{p^2}  -  t^{(3k + 1)p + 11k + 5} a^{p^2}bc^p \\
&  + t^{(6k + 3)p + 9k + 4} c^{p^2 + p + 1}+ t^{(5k + 2)p + 11k + 5} b^{p^2 + p + 1} - t^{(5k + 2)p + 9k + 4} a^p b^{p^2} c.
\end{align*}
Note that we may factor out $t^{(3k + 1)p + 8k + 3}$ and obtain $\Delta=t^{(3k + 1)p + 8k + 3}\Delta'$, where
\begin{multline*}
\Delta' = a^{p^2 + p + 1} - t^{(3k + 2)p} ab^pc^{p^2}  -
t^{3k + 2} a^{p^2}bc^p \\ + t^{(3k + 2)p + k + 1} c^{p^2 + p + 1}
+ t^{(2k + 1)p + 3k + 2} b^{p^2 + p + 1} - t^{(2k + 1)p + k + 1} a^p b^{p^2} c.
\end{multline*}
Let $\deg a = \alpha, \deg b = \beta, \deg c = \gamma$. Then we have
\begin{align*}
\deg a^{p^2 + p + 1} &= p^2 \alpha  + p \alpha + \alpha\\
\deg t^{(3k + 2)p} ab^pc^{p^2} &= p^2 \gamma + p (\beta + 3k + 2) + \alpha\\
\deg t^{3k + 2} a^{p^2}bc^p & = p^2 \alpha + p \gamma + \beta + 3k + 2\\
\deg t^{(3k + 2)p + k + 1} c^{p^2 + p + 1} &= p^2 \gamma + p (\gamma + 3k + 2) + \gamma + k + 1\\
\deg t^{(2k + 1)p + 3k + 2} b^{p^2 + p + 1} &= p^2 \beta + p (\beta + 2k + 1) + \beta +  3k + 2\\
\deg t^{(2k + 1)p + k + 1} a^p b^{p^2} c &= p^2 \beta + p (\alpha + 2k + 1) + \gamma + k + 1.
\end{align*}

Now we prove that there is always a nonzero term with highest degree in $\Delta'$ (hence $\Delta'$ cannot be zero). We consider the following three cases:
\begin{enumerate}
\item $\gamma\geq\max\{\alpha, \beta\}$.
\item $\beta>\gamma$ and $\beta\geq \alpha$.
\item $\alpha>\max\{\beta, \gamma\}$.
\end{enumerate}
The point is that in case (1) (similarly, in cases (2) and (3)), $\deg t^{(3k + 2)p + k + 1} c^{p^2 + p + 1}$
(resp. $\deg t^{(2k + 1)p + 3k + 2} b^{p^2 + p + 1}$, $\deg a^{p^2 + p + 1}$),
is strictly bigger than the degree of the other five terms. We give a detailed explanation in case (2) and leave the other cases to the reader to check (they are all very similar). In case (2), since $p=7k+4$, we have:
\begin{align*}
&\deg t^{(2k + 1)p + 3k + 2} b^{p^2 + p + 1}-\deg a^{p^2 + p + 1} \geq 3k+2 >0,\\
&\begin{multlined}[\textwidth]
\!\deg t^{(2k + 1)p + 3k + 2} b^{p^2 + p + 1}-\deg t^{(3k + 2)p} ab^pc^{p^2}\\
> p^2(\beta-\gamma)-p(k+1)\geq p^2-p(k+1) >0,
\end{multlined}\\
&\deg t^{(2k + 1)p + 3k + 2} b^{p^2 + p + 1}-\deg t^{3k + 2} a^{p^2}bc^p >p^2(\beta-\alpha)+p(\beta-\gamma)>0,\\
&\begin{multlined}[\textwidth]
\!\deg t^{(2k + 1)p + 3k + 2} b^{p^2 + p + 1}-\deg t^{(3k + 2)p + k + 1} c^{p^2 + p + 1} \\
>(p^2+p)(\beta-\gamma)-p(k+1)\geq p^2+p-p(k+1)>0,
\end{multlined}\\
&\deg t^{(2k + 1)p + 3k + 2} b^{p^2 + p + 1}-\deg t^{(2k + 1)p + k + 1} a^p b^{p^2} c =p(\beta-\alpha)+2k+1>0.
\end{align*}
This finishes the proof that $V$ does not have Frobenius stable subspaces and thus $$l_{F_{\widehat{R_P}}}(\lc_{f}^1(\widehat{R_P})/(L\otimes\widehat{R_P}))=2,$$ and hence by the above discussion $$l_{D(R,\overline{\mathbb{F}}_p)}(\lc_f^1(R))=l_{F_R^\infty}(\lc_f^1(R))=1+2=3.$$
\end{proof}

\begin{remark}
Blickle \cite[Section 5]{BlickleDmodulestructureofRFmodules} showed that for a general $F$-finite $F_R^\infty$-module $M$ the two lengths
$l_{F^\infty_R}(M)$ and $l_{D(R,k)} (M)$ might be different if the residue field $k$ is perfect but not algebraically closed. However, we do not know whether this can happen when $M=\lc_I^c(R)$. The Fermat hypersurfaces cannot provide such an example: the natural Frobenius
action decomposes into cycles and thus the matrix representing some large iterate of the Frobenius action will be a diagonal matrix, hence Theorem \ref{theorem--D-module length for isolated singularities} and Proposition \ref{proposition--Fmatrix} shows that the two lengths always coincide in this case.
\end{remark}

We end with a slight generalization of Blickle's result (\cite[Theorem 1.1]{BlickleDmodulestructureofRFmodules}) to local cohomology modules of rings with an isolated non-$F$-rational point over finite fields:

\begin{proposition}
\label{proposition--F-module length for isolated sing over finite field}
Let $(R,\m,k)$ and $A=R/I$ be as in the Notation at the beginning of Section 4. Assume $k$ is a finite field and $A$ has an isolated non-$F$-rational point at $\fm$. Then $l_{F^\infty_R}(\lc_I^c(R))=l_{D(R,k)}(\lc_I^c(R))$.
\end{proposition}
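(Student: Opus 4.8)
The plan is to split $\lc^{n-d}_I(R)$ ($=\lc^{c}_I(R)$, since $I$ has height $n-d$) via the short exact sequence already extracted in the proof of Theorem~\ref{theorem--D-module length for isolated singularities},
\[
0 \to L \to \lc^{n-d}_I(R) \to N \to 0,
\qquad L = \scr{H}_{R,A}\!\left(\lc^{d}_{\fm}(A)/0^*_{\lc^{d}_{\fm}(A)}\right),\quad N = \scr{H}_{R,A}\!\left(0^*_{\lc^{d}_{\fm}(A)}\right),
\]
and to compare the two lengths term by term. For $L$ nothing new is needed: by the results of Blickle cited there, $L$ is a direct sum of modules each simple simultaneously as an $F_R$-module and as a $D(R,k)$-module, so $l_{F^\infty_R}(L) = l_{D(R,k)}(L)$ (both equal to the number of minimal primes of $A$). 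The isolated non-$F$-rational point hypothesis forces $0^*_{\lc^{d}_{\fm}(A)}$ to have finite length, so $N$ is supported only at $\fm$; hence $N \cong E^{\oplus r}$ as an $R$-module with $r := \crk N = \dim_k(0^*_{\lc^{d}_{\fm}(A)})_s$, and $l_{D(R,k)}(N) = r$. Since $F^\infty_R$-length is additive on short exact sequences and is always $\le$ the $D(R,k)$-length by \eqref{equation--basic relation on length in different categories}, everything reduces to the single inequality $l_{F^\infty_R}(N) \ge r$.

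To attack this I would put $N$ in the form handled by Proposition~\ref{proposition--Fmatrix}. As $N$ is $F$-finite and supported only at $\fm$, the socle $\operatorname{soc}_R N \cong k^{\oplus r}$ is a root of $N$; dualizing, $N \cong \scr{H}_{R,R}(V)$ with $V = (\operatorname{soc}_R N)^{\vee}$ an $r$-dimensional $k$-vector space equipped with a Frobenius action $f$, and $f$ acts bijectively on $V$ because $\dim_k V_s = \crk N = r = \dim_k V$ by \cite[Proposition 4.10]{LyubeznikFModulesApplicationsToLocalCohomology}. Proposition~\ref{proposition--Fmatrix} then says that, for each $e\ge 1$, $l_{F^e_R}(N)$ is the length of the longest flag of $k$-subspaces of $V$ stable under the $e$-fold iterate $f^{\circ e}$ and having $f^{\circ e}$ non-nilpotent on every successive quotient; since $f^{\circ e}$ is bijective this last condition is automatic on nonzero quotients, so $l_{F^e_R}(N)$ is just the length of the longest flag of $f^{\circ e}$-stable $k$-subspaces of $V$, and this equals $\dim_k V = r$ as soon as $f^{\circ e}$ is triangularizable over $k$.

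The heart of the matter — and the only step that genuinely uses that $k=\F_q$ is \emph{finite} — is to produce an $e$ with $f^{\circ e}$ triangularizable over $k$. First take $e$ a multiple of $s$, where $q=p^s$; then $a^{p^e}=a$ for all $a\in k$, so $f^{\circ e}$ is an ordinary $k$-linear automorphism of $V$. Its eigenvalues lie in some finite extension $\F_{q^t}$, so after enlarging $e$ to a multiple of $s(q^t-1)$ every eigenvalue becomes a $(q^t-1)$-st power in $\F_{q^t}^{\times}$, hence equal to $1$; thus $f^{\circ e}$ has characteristic polynomial $(\lambda-1)^r$, which splits over $k$, so $f^{\circ e}$ is triangularizable there. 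For this $e$ we get $l_{F^\infty_R}(N)\ge l_{F^e_R}(N)=r$, and combined with $l_{F^\infty_R}(N)\le l_{D(R,k)}(N)=r$ this gives $l_{F^\infty_R}(N)=r$. Adding the contribution of $L$ yields $l_{F^\infty_R}(\lc^{n-d}_I(R)) = l_{D(R,k)}(\lc^{n-d}_I(R))$. Over an infinite perfect field $\overline k^{\times}$ is not a torsion group, so this device is unavailable; that is precisely why the analogous question there is left open, see the remark following Proposition~\ref{proposition--D-length bigger than F-length 2}.

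The step I expect to require the most care is the structural reduction in the second paragraph: identifying $N$ with $\scr{H}_{R,R}(V)$ for a $k$-vector space $V$ of dimension exactly $\crk N$ on which $f$ is bijective, i.e.\ invoking the correspondence between $F$-finite $F_R$-modules supported at $\fm$ and finite-dimensional $k$-vector spaces with a bijective Frobenius action. If one prefers to avoid the bijectivity assertion, one can take $V$ to be the Matlis dual of an arbitrary root of $N$ and argue instead with the generalized $0$-eigenspace of $f^{\circ e}$, using that the stable dimension $\dim_k V_s$ is unaffected by passing to an iterate and equals $\crk N = r$ by \cite[Proposition 4.10]{LyubeznikFModulesApplicationsToLocalCohomology}; the linear-algebra input over the finite field is the same.
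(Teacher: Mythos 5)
Your proposal is correct and follows essentially the same route as the paper: both reduce, via the decomposition from Theorem \ref{theorem--D-module length for isolated singularities} and Proposition \ref{proposition--Fmatrix}, to showing that some iterate of the Frobenius matrix on the (finite-dimensional, Frobenius-bijective) stable part of $0^*_{\lc^d_{\fm}(A)}$ is triangularizable over $k$. The paper accomplishes this by noting that over a finite field every invertible matrix has a power equal to the identity (your eigenvalue/torsion argument is the same fact, stopping at triangularizability rather than the identity), so the two proofs coincide in all essentials.
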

\begin{proof}
Following the proof of Theorem \ref{theorem--D-module length for isolated singularities} and Proposition \ref{proposition--Fmatrix}, it is enough to show that there exists $e>0$ such that the matrix $B_e$ representing the $e$-th Frobenius action on $(0^*_{\lc_\m^d(A)})_s$ is an upper triangular matrix. We will show that in fact $B_e$ is the identity matrix for $e\gg0$ sufficiently divisible. It is easy to observe that $B_e=B_1\cdot B_1^{[p]}\cdots B_1^{[p^{e-1}]}$ where $B^{[p^i]}$ denotes the matrix obtained by raising each entry of $B$ to its $p^i$-th power. Suppose $k=\mathbb{F}_{p^n}$ and thus $B^{[p^n]}=B$. Hence we know that $B_{ne}=B_1^e\cdot (B_1^{[p]})^e\cdots (B_1^{[p^{n-1}]})^e$. Now the result follows from the elementary fact that over $\mathbb{F}_{p^n}$, every invertible matrix $B$ has a power that is the identity matrix: look at the Jordan normal form of $B$ in $\overline{\mathbb{F}}_p$, taking a large $p^m$-th power will make each Jordan block into a diagonal matrix. But
an invertible diagonal matrix over $\overline{\mathbb{F}}_{p^n}$ can be raised to a large power to make each diagonal entry be $1$.
\end{proof}

\bibliographystyle{skalpha}
\bibliography{CommonBib}

\providecommand{\bysame}{\leavevmode\hbox to3em{\hrulefill}\thinspace}
\providecommand{\MR}{\relax\ifhmode\unskip\space\fi MR}
% \MRhref is called by the amsart/book/proc definition of \MR.
\providecommand{\MRhref}[2]{%
  \href{http://www.ams.org/mathscinet-getitem?mr=#1}{#2}
}
\providecommand{\href}[2]{#2}
\begin{thebibliography}{NBW14}

\bibitem[Ber71]{BernsteinModulesoverRingofDifferential}
{\sc I.~N. Bern\v{s}te\u{i}n}: \emph{Modules over a ring of differential
  operators. {A}n investigation of the fundamental solutions of equations with
  constant coefficients}, Funkcional. Anal. i Prilo\v zen. \textbf{5} (1971),
  no.~2, 1--16. {\sf\scriptsize 0290097}

\bibitem[Bjo79]{BjorkRingsofDifferentialOperators}
{\sc J.-E. Bjork}: \emph{Rings of differential operators}, North-Holland
  Mathematical Library, vol.~21, North-Holland Publishing Co., Amsterdam-New
  York, 1979. {\sf\scriptsize 549189}

\bibitem[Bli01]{BlickleThesis}
{\sc M.~Blickle}: \emph{The intersection homology {$D$}-module in finite
  characteristic}, Thesis, University of Michigan (2001).

\bibitem[Bli03]{BlickleDmodulestructureofRFmodules}
{\sc M.~Blickle}: \emph{The {$D$}-module structure of {$R[F]$}-modules}, Trans.
  Amer. Math. Soc \textbf{355} (2003), no.~4, 1647--1668.

\bibitem[Bli04]{BlickleIntersectionhomologyDmodule}
{\sc M.~Blickle}: \emph{The intersection homology {$D$}-module in finite
  characteristic}, Math. Ann. \textbf{328} (2004), no.~3, 425--450.

\bibitem[EH08]{EnescuHochsterTheFrobeniusStructureOfLocalCohomology}
{\sc F.~Enescu and M.~Hochster}: \emph{The {F}robenius structure of local
  cohomology}, Algebra Number Theory \textbf{2} (2008), no.~7, 721--754.

\bibitem[Fed83]{FedderFPureRat}
{\sc R.~Fedder}: \emph{{$F$}-purity and rational singularity}, Trans. Amer.
  Math. Soc. \textbf{278} (1983), no.~2, 461--480.

\bibitem[HS77]{HartshorneSpeiserLocalCohomologyInCharacteristicP}
{\sc R.~Hartshorne and R.~Speiser}: \emph{Local cohomological dimension in
  characteristic {$p$}}, Ann. of Math. (2) \textbf{105} (1977), no.~1, 45--79.

\bibitem[Hoc07]{HochsterFoundationsofTC}
{\sc M.~Hochster}: \emph{Foundations of tight closure theory},
  http://www.math.lsa.umich.edu/$\sim$hochster/711F07/fndtc.pdf.

\bibitem[HH90]{HochsterHunekeTC1}
{\sc M.~Hochster and C.~Huneke}: \emph{Tight closure, invariant theory, and the
  {B}rian\c con-{S}koda theorem}, J. Amer. Math. Soc. \textbf{3} (1990), no.~1,
  31--116.

\bibitem[HR76]{HochsterRobertsFrobeniusLocalCohomology}
{\sc M.~Hochster and J.~L. Roberts}: \emph{The purity of the {F}robenius and
  local cohomology}, Adv. Math. \textbf{21} (1976), no.~2, 117--172.

\bibitem[HS93]{HunekeSharpBassnumbersoflocalcohomologymodules}
{\sc C.~Huneke and R.~Y. Sharp}: \emph{Bass numbers of local cohomology
  modules}, Trans. Amer. Math. Soc \textbf{339} (1993), no.~2, 765--779.

\bibitem[Kat08]{KatzmanParameterTestIdealOfCMRings}
{\sc M.~Katzman}: \emph{Parameter-test-ideals of {C}ohen-{M}acaulay rings},
  Compos. Math. \textbf{144} (2008), no.~4, 933--948.

\bibitem[KS12]{KatzmanSchwedeAlgorithmForComputing}
{\sc M.~Katzman and K.~Schwede}: \emph{An algorithm for computing compatibly
  {F}robenius split subvarieties}, J. Symbolic Comput. \textbf{47} (2012),
  no.~8, 996--1008.

\bibitem[KZ14]{KatzmanZhangAlgorithmAnnihilatorAritinian}
{\sc M.~Katzman and W.~Zhang}: \emph{Annihilators of {A}rtinian modules
  compatible with a {F}robenius map}, J. Symbolic Comput. \textbf{60} (2014),
  29--46. {\sf\scriptsize 3131377}

\bibitem[Lyu93]{LyubeznikFinitenessLocalCohomology}
{\sc G.~Lyubeznik}: \emph{Finiteness properties of local cohomology modules (an
  application of {$D$}-modules to commutative algebra)}, Invent. Math.
  \textbf{113} (1993), no.~1, 41--55.

\bibitem[Lyu97]{LyubeznikFModulesApplicationsToLocalCohomology}
{\sc G.~Lyubeznik}: \emph{{$F$}-modules: applications to local cohomology and
  {$D$}-modules in characteristic {$p>0$}}, J. Reine Angew. Math. \textbf{491}
  (1997), 65--130.

\bibitem[Lyu00]{LyubeznikInjectivedimensionofDmodulescharacteristicfreeapproach}
{\sc G.~Lyubeznik}: \emph{Injective dimension of {$D$}-modules: a
  characteristic-free approach}, J. Pure Appl. Algebra \textbf{149} (2000),
  no.~2, 205--212.

\bibitem[Lyu11]{LyubeznikCharFreeHolonomic}
{\sc G.~Lyubeznik}: \emph{A characteristic-free proof of a basic result on
  {$\scr D$}-modules}, J. Pure Appl. Algebra \textbf{215} (2011), no.~8,
  2019--2023. {\sf\scriptsize 2776441 (2012b:13068)}

\bibitem[LSW16]{LyubeznikSinghWaltherlocalcohomologysupportedatdeterminantalideals}
{\sc G.~Lyubeznik, A.~K. Singh, and U.~Walther}: \emph{Local cohomology modules
  supported at determinantal ideals}, J. Eur. Math. Soc. (JEMS) \textbf{18}
  (2016), no.~11, 2545--2578. {\sf\scriptsize 3562351}

\bibitem[Ma14a]{MaThecategoryofF-moduleshasfiniteglobaldimension}
{\sc L.~Ma}: \emph{The category of {${F}$}-modules has finite global
  dimension}, J. Algebra \textbf{402} (2014), 1--20.

\bibitem[Ma14b]{MaFinitenesspropertyoflocalcohomologyforFpurerings}
{\sc L.~Ma}: \emph{Finiteness properties of local cohomology for {$F$}-pure
  local rings}, Int. Math. Res. Not. (2014), no.~20, 5489--5509.

\bibitem[MZ14]{MaZhangEuleriangradedDmodules}
{\sc L.~Ma and W.~Zhang}: \emph{Eulerian graded {$\scr{D}$}-modules}, Math.
  Res. Lett. \textbf{21} (2014), no.~1, 149--167.

\bibitem[NBW14]{NunezBetancourtWittGeneralizedLyubeznikNumbers}
{\sc L.~N{\'u}{\~n}ez-Betancourt and E.~E. Witt}: \emph{Generalized {L}yubeznik
  numbers}, Nagoya Math. J. \textbf{215} (2014), 169--201. {\sf\scriptsize
  3296589}

\bibitem[PS73]{PeskineSzpiroDimensionProjective}
{\sc C.~Peskine and L.~Szpiro}: \emph{Dimension projective finie et cohomologie
  locale. {A}pplications \`a la d\'emonstration de conjectures de {M}.
  {A}uslander, {H}. {B}ass et {A}. {G}rothendieck}, Inst. Hautes \'Etudes Sci.
  Publ. Math. (1973), no.~42, 47--119.

\bibitem[ST12]{SchwedeTuckerAsurveyoftestideals}
{\sc K.~Schwede and K.~Tucker}: \emph{A survey of test ideals}, Progress in
  commutative algebra 2, Walter de Gruyter, Berlin, 2012, pp.~39--99.

\bibitem[Sha07]{SharpGradedAnnihilatorsofModulesovertheFrobeniusSkewpolynomialRingandTC}
{\sc R.~Y. Sharp}: \emph{Graded annihilators of modules over the {F}robenius
  skew polynomial ring, and tight closure}, Trans. Amer. Math. Soc \textbf{359}
  (2007), no.~9, 4237--4258.

\bibitem[Smi97]{SmithFRatImpliesRat}
{\sc K.~E. Smith}: \emph{{$F$}-rational rings have rational singularities},
  Amer. J. Math. \textbf{119} (1997), no.~1, 159--180.

\bibitem[Yek92]{YekutieliExplicitconstructionofResidueComplex}
{\sc A.~Yekutieli}: \emph{An explicit construction of the {G}rothendieck
  residue complex}, Ast\'erisque (1992), no.~208, 127, With an appendix by
  Pramathanath Sastry. {\sf\scriptsize 1213064 (94e:14026)}

\bibitem[Zha12]{ZhangYGradedFModules}
{\sc Y.~Zhang}: \emph{Graded {$F$}-modules and local cohomology}, Bull. Lond.
  Math. Soc. \textbf{44} (2012), no.~4, 758--762.

\end{thebibliography}
\end{document}